\titleformat{\section}[hang]%
{\bfseries\large}{\thesection.}{1ex}{}%
\titleformat{\subsection}[hang]%
{\bfseries}{\thesubsection}{1ex}{}%
\theoremstyle{plain}
\newtheorem{theorem}{Theorem}[section]
\newtheorem{proposition}[theorem]{Proposition}
\newtheorem{lemma}[theorem]{Lemma}
\theoremstyle{definition}
\newtheorem{definition}[theorem]{Definition}
\theoremstyle{remark}
\newtheorem{remark}[theorem]{Remark}
\newcommand{\dashlinestretch}{30}
\begin{document}

%
%
\title{\Large An Axiomatics and a Combinatorial Model of Creation/Annihilation
	Operators}
\author{\large Marcelo Fiore}
\date{}
\maketitle

%
\begin{minipage}{118mm}{\small
{\bf Abstract.} 
A categorical axiomatic theory of creation/annihilation operators on
bosonic 
Fock space is introduced and the combinatorial model that motivated it is
presented.  Commutation relations and coherent states are considered in
both frameworks.\\
}\end{minipage}


\section*{Introduction}

This work is an investigation into the mathematical structure of
creation/an\-ni\-hi\-la\-tion operators on (bosonic or symmetric) Fock space.
My aim is two-fold: to introduce an axiomatic setting for commutation
relations and
coherent states, and to provide and exercise one such model of combinatorial
nature.  
In the spirit of Paul Dirac's credo
\begin{quote}
  ``One should allow oneself to be led in the direction which the mathematics
  suggests~\ldots\ one must follow up a mathematical idea and see what its
  consequences are, even though one gets led to a domain which is completely
  foreign to what one started with~\ldots\ Mathematics can lead us in a
  direction we would not take if we only followed up physical ideas by
  themselves.''
%
\end{quote}
my hope is that 
the mathematical theories presented here, and the ideas that underly them, can
be of use to physics.


\paragraph{Axiomatics.}

Section~\ref{AxiomaticTheory} considers the axiomatics.  This is set up in the
framework of category theory, which is particularly suitable for our purposes.
Our starting point is the consideration of categories of spaces and linear
maps.  So as to be able to accommodate Fock space, these should allow for the
formation of superposed and of noninteracting systems.  In
Section~\ref{SpacesAndLinearMaps}, I respectively formalise these as
compatible biproduct and symmetric monoidal structures.  The linear-algebraic
structure is then derived by convolution with respect to the biproduct
structure.  For completeness, other equivalent formalisations are also given.
Of central importance to our development is the algebraic axiomatisation of
biproduct structure as monoidal bialgebra structure~(see
Proposition~\ref{AlgebraicBiproductStructure} and
Lemma~\ref{BiproductBialgebraStructure}).  
The resulting setting is rich enough for formalising Fock space together with
creation/annihilation operators on it.  Specifically, in
Section~\ref{BosonicFockSpace}, the Fock-space construction is axiomatised as
a functor on the category of spaces and linear maps that transforms the
biproduct~(\ie~superposition) structure to the symmetric
monoidal~(\ie~noninteracting) structure.  A fundamental aspect of this
definition is that it lifts the biproduct bialgebra structure to a
bialgebra structure on Fock space.  This allows for a general definition of
creation/annihilation operators~(Definition~\ref{CreationAnnihilationDef}) and
embodies the essential mathematical structure of the commutation
relations~(Theorem~\ref{Commutation_Relations}).
Section~\ref{CoherentStates} considers coherent states on Fock space.  To this
end, however, one needs specialise the discussion to Fock-space constructions
with suitable comonad structure.  This additional structure plays two roles:
it provides a canonical notion of annihilation operator and permits the
association of coherent states in Fock space to
vectors~(Definition~\ref{ExtensionsDef}~(\ref{GlobalElementExtension}) and
Theorem~\ref{CoherentStateTheorem}).

\paragraph{Combinatorial model.}

Section~\ref{CombinatorialModel} puts forward a bicategorical combinatorial
model.  Its combinatorial nature resides in the structure being a
generalisation of that of the \emph{combinatorial species of
structures} of Joyal~\cite{JoyalAdv,Joyal1234} 
(see~\cite{FGHW} for details).  The main consequence of this for us here is
that identities, such as the commutation relations, 
acquire combinatorial meaning in the form of natural bijective
correspondences.

The combinatorial model is based on the bicategory of profunctors (or
bimodules, or distributors) as the setting for spaces and linear maps.  These
structures, I briefly review in Section~\ref{TheBicategoryOfProfunctors}
noting analogies with vector spaces.  Combinatorial (bosonic or symmetric)
Fock space is then introduced in Section~\ref{CombinatorialFockSpace}.  The
definition mimics that of the conventional construction as a biproduct of
symmetric tensor powers.  After making explicit the mathematical structure of
combinatorial Fock space, the commutation relation involving creation and
annihilation is considered.  We see here that the essence of its combinatorial
content arises from the simple 
fact that
$$
\symmgroup_{n+1}
\ \iso\
\symmgroup_n
\cup
\big([n] \times \symmgroup_n\big)
\qquad
\mbox{for $[n]=\setof{1,\ldots,n}$}
$$
classifying the permutations on the set $[n+1]$ according as to whether or
not they fix the element $n+1$, see~(\ref{CombinatorialContentOne})
and~(\ref{CombinatorialContentTwo}).
It is an important aspect of the theory, however, that all such
calculations are done formally in the \emph{calculus of coends} (within
the \emph{generalized logic} of Lawvere~\cite{LawvereGenMet}).  I further
illustrate how the calculus can be seen diagrammatically.

Finally, Section~\ref{CombinatorialCoherentStates} considers coherent states
in the combinatorial model.  Taking advantage of the duality structure
available in it, a notion of exponential (in the form of a comonadic/monadic
convolution) is introduced.  The exponential of the creation operator of a
vector at the vacuum state is shown, both algebraically and
combinatorially, to yield the coherent state of the vector.

\paragraph{Related work.}
This work lies at the intersection of computer science, logic, mathematics,
and physics.  As such, it bears relationship with a variety of developments.

%
In relation to mathematical logic, the notion of comonad needed in the
discussion of coherent states is as it arises in models of the
\emph{linear logic} of Girard~\cite{GirardLL}.  The connection between the
exponential modality of linear logic and the Fock-space construction of
physics was recognised long ago by Panangaden
(see~\eg~\cite{BlutePanangadenSeely,BlutePanangaden}).  In view of recent
developments, however, the connection further puts this work in the
context of models of the \emph{differential linear logic}
of Ehrhard and Regnier~\cite{EhrhardRegnierDiff}; and indeed the models to
be found in~\cite{EhrhardKSS,EhrhardFS,BCS,Hyvernat,BluteEhrhardTasson}
all fall within the axiomatisation here.  A stronger axiomatisation (of
which the combinatorial model is the motivating
example~\cite{FioreGSdraft}) leading to fully-fledged differential
structure has been pursued in~\cite{FioreDiff}.

%
An axiomatics for Fock space has independently been considered
by Vicary~\cite{Vicary}.  His setting, which aims at a tight
correspondence with that of Fock space on Hilbert space, is stronger than
the minimalist one put forward here.  As acknowledged in his work, the
argument used for establishing the commutation relation between creation
and annihilation is based on a private communication of mine.  

%
The combinatorial model is closely related to the \emph{stuff-type model}
of Baez and Dolan~\cite{BaezDolan}, see also~\cite{Morton}, being both
founded on species of structures.  Roughly, their main difference resides
in that the combinatorial model organises structure as presheaves, whilst
the stuff-type model does so as bundles.  

In connection to mathematical physics, the stuff-type model has been
related to Feynman diagrams and, in connection to mathematical logic,
these have been related to 
the proof theory of linear logic by means of the \mbox{\emph{$\phi$-calculus}}
of Blute and Panangaden~\cite{BlutePanangaden}, which, in turn, has formal
syntactic structure similar to that of the calculus of the combinatorial
model.  These intriguing relationships are worth investigating.

\paragraph{Acknowledgements.}

The mathematical structure underlying the combinatorial model in the
setting of \emph{generalised species of structures} was developed in
collaboration with Nicola Gambino, Martin Hyland, and Glynn
Winskel~\cite{FioreFOSSACS,FGHW,HylandGen}.  
The fact that 
it supports creation/annihilation operators, I~realised shortly after
giving a \href{http://www.maths.ox.ac.uk/node/4340/}{seminar at Oxford in
2004} on this material and the differential structure of generalised
species of structures~\cite{FioreGSdraft,FioreFOSSACS,FioreSGDP} where
Prakash Panangaden raised the 
question.
The axiomatics came later~\cite{FioreDiff}, and was influenced by the work
of Thomas Ehrhard and Laurent Regnier on differential
nets~\cite{EhrhardRegnierDiffNets}. 
%
The work presented here is a write up of the talk~\cite{FioreCQL}, which I
was invited to give by Bob Coecke.
I'm grateful to them all for their part in this work.

\section{Axiomatic theory}
\label{AxiomaticTheory}

This section introduces an axiomatisation of the (bosonic or symmetric)
Fock-space construction on categories of spaces and linear maps,
see~\eg~\cite{Geroch}.

Spaces and linear maps are axiomatised by means of a category~$\Spaces$
equipped with compatible biproduct~$(\O,\biprod)$ and symmetric
monoidal~$(\I,\tensor)$ structures.  
Section~\ref{SpacesAndLinearMaps} reviews these notions and explains the
linear-algebraic structure that they embody.
For a category of spaces and linear maps, the Fock-space construction is
axiomatised as a strong symmetric monoidal functor~$\Fock$ mapping
$(\O,\biprod)$ to $(\I,\tensor)$.  
Section~\ref{BosonicFockSpace} reviews this notion and explains how it
supports an axiomatisation of creation/annihilation operators subject to
commutation relations.
For $\Fock$ underlying a linear exponential comonad, coherent states are
considered and studied in Section~\ref{CoherentStates}.

\subsection{Spaces and linear maps}
\label{SpacesAndLinearMaps}

\paragraph{Biproduct structure.}

A category with finite coproducts and finite products is said to be
\emph{bicartesian}.  One typically writes $0, +$ for the empty and binary
coproducts and $1, \times$ for the empty and binary products.

An object that is both initial and terminal~(\ie~an empty coproduct and
product) is said to be a \emph{zero object}.  For a zero object~$\O$, I will
write $\O_{A,B}$ for the map $A\rightarrow B$ given by the composite
$A\rightarrow \O\rightarrow B$.  

\begin{definition}
A bicartesian category is said to have \emph{biproducts} whenever:
\begin{enumerate}
  \item it has a zero object $\O$, and 
  \item for all objects $A$ and $B$, the canonical map
    $$
    \big[\pair{\id_A,\O_{A,B}},\pair{\O_{B,A},\id_B}\big]
      :A+B\rightarrow A\times B
    $$
    is an isomorphism.
\end{enumerate}
In this context, one typically writes $\biprod$ for the binary biproduct.
\end{definition}

The proposition below gives an algebraic presentation of biproduct
structure which is crucial to our development.
Recall that a \emph{symmetric monoidal structure}
$(\I,\tensor,\lambda,\rho,\alpha,\symm)$ on a category $\lscat C$ is given
by an object $\I\in\lscat C$, a functor $\tensor:\lscat
C^2\rightarrow\lscat C$, and natural isomorphisms $\lambda_C:\I\tensor
C\iso C$, $\rho_C:C\tensor\I\iso C$, $\alpha_{A,B,C}:(A\tensor B)\tensor
C\iso A\tensor(B\tensor C)$, and $\symm_{A,B}: A\tensor B\iso B\tensor A$
subject to coherence conditions, see~\eg~\cite{MacLane}.
\hide{
the coherence conditions
$$\xymatrix{
\ar[dr]_-{\rho_A\tensor\id_B}
(A\tensor\I)\tensor B \ar[rr]^-{\alpha_{A,\I,B}} & & A\tensor(\I\tensor B)
\ar[dl]^-{\id_A\tensor\lambda_B}
\\
& A\tensor B& 
}
$$
$$\xymatrix@C45pt{
\ar[d]|-{\alpha_{A,B,C}\tensor\id_D} 
\big((A\tensor B)\tensor C\big)\tensor D \ar[r]^-{\alpha_{A\tensor B,C,D}}
&
(A\tensor B)\tensor (C\tensor D) \ar[r]^-{\alpha_{A,B,C\tensor D}} &
A\tensor \big(B\tensor (C\tensor D)\big) 
\ar[d]|-{\id_A\tensor\alpha_{B,C,D}}
\\
\big(A\tensor (B\tensor C)\big)\tensor D \ar[rr]_-{\alpha_{A,B\tensor C,D}} & &
A\tensor \big((B\tensor C)\tensor D)\big) 
}$$
$$\xymatrix{
& B\tensor A \ar[dr]^-{\sigma_{B,A}} & 
\\
\ar[ur]^-{\sigma_{A,B}}
A\tensor B \ar[rr]_{\id_{A\tensor B}} & & 
A\tensor B
}
$$
$$\xymatrix{
& A\tensor(B\tensor C) \ar[r]^-{\sigma_{A,B\tensor C}}
&
(B\tensor C)\tensor A
\ar[dr]^-{\alpha_{B,C,A}}
&
\\
(A\tensor B)\tensor C 
\ar[ru]^-{\alpha_{A,B,C}}
\ar[dr]_-{\sigma_{A,B}\tensor\id_C}
& & & B\tensor(C\tensor A)
\\
& (B\tensor A)\tensor C 
\ar[r]_-{\alpha_{B,A,C}} & 
B\tensor(A\tensor C) 
\ar[ru]_-{\id_B\tensor\sigma_{A,C}} &
}$$
}

\begin{proposition}\label{AlgebraicBiproductStructure}
To give a choice of biproducts in a category is equivalent to giving a
symmetric monoidal structure $(\O,\biprod)$ on it together with natural
transformations 
\begin{equation}\label{BiproductBialgebraData}
\hfill
\begin{minipage}{0cm}
\xymatrix@C=20pt@R=5pt{
\Zero \ar[dr]^-{\initialmap_A} & & \Zero
\\
& A \ar[dr]_-{\Diag_A} \ar[ur]^-{\terminalmap_A} &
\\
A\biprod A \ar[ru]_-{\coDiag_A} & & A \biprod A
}
\end{minipage}
\hfill\hfill
\end{equation}
such that 
\begin{enumerate}
  \item
    $(A,\initialmap_A,\coDiag_A)$ is a commutative monoid.

\begin{equation}\label{BiproductCommutativeMonoidOne}
\hspace{-35mm}
\begin{minipage}{10cm}
$\begin{array}{c}
\xymatrix@C35pt{
\Zero\biprod\IdX \ar[r]^-{\initialmap_\IdX\biprod\id_\IdX} \ar[dr]|-\iso &
\IdX\biprod\IdX \ar[d]|-{\coDiag_\IdX} & \IdX\biprod\Zero
\ar[l]_-{\id_\IdX\biprod\initialmap_\IdX} \ar[dl]|-\iso
\\
& \IdX & 
}
\
\xymatrix@C45pt{
\IdX\biprod\IdX\biprod\IdX
\ar[d]_-{\coDiag_\IdX\biprod\id_\IdX}
\ar[r]^-{\id_\IdX\biprod\coDiag_\IdX} 
& \IdX\biprod\IdX \ar[d]^-{\coDiag_\IdX}
\\ 
\IdX\biprod\IdX \ar[r]_-{\coDiag_\IdX} & \IdX
}
\end{array}$
\end{minipage}
\end{equation}
\begin{equation}\label{BiproductCommutativeMonoidTwo}
\mbox{}\hfill
\begin{minipage}{3cm}
\xymatrix{
\IdX\biprod\IdX \ar[rr]^-{\symm_{A,A}} \ar[dr]_-{\coDiag_A} & & \IdX\biprod\IdX
\ar[dl]^-{\coDiag_A}
\\ 
& \IdX &
}
\end{minipage}
\hfill\hfill\mbox{}
\end{equation}

  \item
    $(A,\terminalmap_A,\Diag_A)$ is a commutative comonoid.  

\begin{equation}\label{BiproductCommutativeCoMonoidOne}
\hspace{-85mm}
\begin{minipage}{5cm}
$\begin{array}{c}
\xymatrix@C35pt{
& \ar[dl]|-\iso \IdX \ar[d]|-{\Diag_\IdX} \ar[dr]|-\iso & 
\\
\Zero\biprod\IdX & \ar[l]^-{\terminalmap_\IdX\biprod\id_\IdX}
\IdX\biprod\IdX
\ar[r]_-{\id_\IdX\biprod\terminalmap_\IdX} & \IdX\biprod\Zero
}
\
\xymatrix@C45pt{
\IdX \ar[r]^-{\Diag_\IdX} \ar[d]_-{\Diag_\IdX} &
\IdX\biprod\IdX
\ar[d]^-{\id_\IdX\biprod\Diag_\IdX} 
\\
\IdX\biprod\IdX \ar[r]_-{\Diag_\IdX\biprod\id_\IdX} &
\IdX\biprod\IdX\biprod\IdX
&
}
\end{array}$
\end{minipage}
\end{equation}
\begin{equation}\label{BiproductCommutativeCoMonoidTwo}
\mbox{}\hfill
\begin{minipage}{3cm}
\xymatrix{
& \ar[dl]_-{\Diag_\IdX} \IdX \ar[dr]^-{\Diag_\IdX} & 
\\
\IdX\biprod\IdX \ar[rr]_-{\symm_{\IdX,\IdX}} & & \IdX\biprod\IdX
}
\end{minipage}
\hfill\hfill\mbox{}
\end{equation}

  \item
    $\begin{array}[t]{lcl}
      \initialmap_{A\biprod B} & = &
      (\, \O \iso \O\biprod\O
      \xymatrix@C40pt{\ar[r]^-{\initialmap_A\biprod\initialmap_B}&}
      A\biprod B \,)
      \\[2mm]
      \terminalmap_{A\biprod B} & = &
      (\, A\biprod B 
      \xymatrix@C40pt{\ar[r]^-{\terminalmap_A\biprod\terminalmap_B}&}
      \O\biprod\O \iso \O \,)
      \\[2mm]
      \coDiag_{A\biprod B}
      & = & 
      \big(\, (A\biprod B)\biprod(A\biprod B) \iso 
      (A\biprod A)\biprod(B\biprod B) 
      \xymatrix@C40pt{\ar[r]^-{\coDiag_A\biprod\coDiag_B}&}
      A\biprod B \,\big)
      \\[2mm]
      \Diag_{A\biprod B}
      & = & 
      \big(\, (A\biprod B)
      \xymatrix@C40pt{\ar[r]^-{\Diag_A\biprod\Diag_B}&}
      (A\biprod A)\biprod(B\biprod B) 
      \iso
      (A\biprod B)\biprod(A\biprod B) \,\big)
    \end{array}$
\end{enumerate}
\end{proposition}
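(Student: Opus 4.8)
The plan is to exhibit mutually inverse passages between choices of biproduct structure on $\Spaces$ and the algebraic data of the statement, namely a symmetric monoidal structure $(\O,\biprod)$ together with natural transformations $\initialmap,\terminalmap,\coDiag,\Diag$ satisfying (1)--(3). For the passage \emph{from biproducts to the data}: given a choice of biproducts, the functor $\biprod$ with unit $\O$ is simultaneously the finite-coproduct and the finite-product structure, and either presentation equips it with a symmetric monoidal structure $(\O,\biprod,\lambda,\rho,\alpha,\symm)$; the two agree, the symmetry being in both cases the canonical swap. One takes $\initialmap_A\colon\O\to A$ and $\terminalmap_A\colon A\to\O$ to be the unique maps out of the initial and into the terminal object, $\coDiag_A=[\id_A,\id_A]$ the codiagonal of the coproduct, and $\Diag_A=\pair{\id_A,\id_A}$ the diagonal of the product; naturality of all four is immediate from the universal properties. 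Axiom~(1) is the standard fact that in a cocartesian monoidal category every object is canonically a commutative monoid via its codiagonal and the unique map from the initial object, and axiom~(2) is its dual for cartesian monoidal categories; both apply because the single monoidal structure $(\O,\biprod)$ presents both the coproduct and the product. In axiom~(3) the first two equations hold because $\O$ is initial, resp.\ terminal, so the displayed maps are the only ones of their type, and the last two are obtained by pre-composing with the two coproduct injections, resp.\ post-composing with the two product projections, and using uniqueness.

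For the passage \emph{from the data to biproducts}, I would first show that $\O$ is a zero object. Naturality of $\initialmap$ at $\terminalmap_A$ and of $\terminalmap$ at $\initialmap_A$ give $\initialmap_\O=\terminalmap_A\circ\initialmap_A=\terminalmap_\O$; writing $e$ for this common endomap of $\O$, naturality of $\initialmap$ at $e$ shows $e\circ e=e$. Putting $m=\coDiag_\O\circ\lambda_\O^{-1}$, the (right) unit law in~(\ref{BiproductCommutativeMonoidOne}), naturality of $\lambda$, and the coherence identity $\lambda_\O=\rho_\O$ give $m\circ e=\id_\O$; together with $e\circ e=e$ this forces $e=\id_\O$, so $\initialmap_\O=\terminalmap_\O=\id_\O$, and then naturality of $\initialmap$, resp.\ $\terminalmap$, shows that $\initialmap_Z$, resp.\ $\terminalmap_Z$, is the \emph{unique} map $\O\to Z$, resp.\ $Z\to\O$. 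Next, $\biprod$ is a binary coproduct: with injections $\iota_1=(\id_A\biprod\initialmap_B)\circ\rho_A^{-1}$ and $\iota_2=(\initialmap_A\biprod\id_B)\circ\lambda_B^{-1}$ and copairing $[f,g]=\coDiag_C\circ(f\biprod g)$, the equation $[f,g]\circ\iota_1=f$ follows from naturality of $\initialmap$ and of $\rho$ together with the unit triangle $\coDiag_C\circ(\id_C\biprod\initialmap_C)=\rho_C$, while uniqueness follows from the identity $\coDiag_{A\biprod B}\circ(\iota_1\biprod\iota_2)=\id_{A\biprod B}$: any $h$ with $h\circ\iota_1,h\circ\iota_2$ prescribed satisfies, by naturality of $\coDiag$, $h=h\circ\coDiag_{A\biprod B}\circ(\iota_1\biprod\iota_2)=\coDiag_C\circ(h\biprod h)\circ(\iota_1\biprod\iota_2)=\coDiag_C\circ\big((h\circ\iota_1)\biprod(h\circ\iota_2)\big)$. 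Dually, using $\terminalmap$, $\Diag$ and~(\ref{BiproductCommutativeCoMonoidOne}), $\biprod$ is a binary product with projections $\pi_1=\rho_A\circ(\id_A\biprod\terminalmap_B)$, $\pi_2=\lambda_B\circ(\terminalmap_A\biprod\id_B)$. Since $\O$ is a zero object it is also the empty (co)product, so $\Spaces$ is bicartesian, and the canonical comparison $\big[\pair{\id_A,\O_{A,B}},\pair{\O_{B,A},\id_B}\big]$ reduces, via the unit laws, to $[\iota_1,\iota_2]=\coDiag_{A\biprod B}\circ(\iota_1\biprod\iota_2)=\id_{A\biprod B}$ and is in particular an isomorphism -- a choice of biproducts.

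Finally I would check that the two passages are mutually inverse, which is routine: from biproducts, the reconstructed injections and projections are the original ones because $\pair{\id_A,\O_{A,B}}=\iota_1$, and dually, by the counit laws, and the reconstructed $\initialmap,\terminalmap,\coDiag,\Diag$ are visibly the original unique map, codiagonal, etc.; from the data, these same maps are recovered from the established biproduct structure using~(\ref{BiproductCommutativeMonoidOne})--(\ref{BiproductCommutativeCoMonoidTwo}). The two genuinely delicate points, and the only places where monoidal coherence is really exploited, are the identity $e=\id_\O$ establishing the zero object (a short argument from $e\circ e=e$ and a one-sided inverse) and the diagram chase proving $\coDiag_{A\biprod B}\circ(\iota_1\biprod\iota_2)=\id$, which must unwind axiom~(3), the coherence isomorphism $(A\biprod B)\biprod(A\biprod B)\iso(A\biprod A)\biprod(B\biprod B)$ appearing in it, the unitors, and the (co)monoid unit laws; everything else is naturality.
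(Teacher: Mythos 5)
The paper states Proposition~\ref{AlgebraicBiproductStructure} without proof, so there is no argument of the author's to compare against; your proof is correct and is the standard one that the paper implicitly relies on. The two points you flag as delicate are indeed exactly where the content lies, and both check out: $e=\id_{\O}$ follows from $e\circ e=e$, the right unit law at $A=\O$, naturality of $\lambda$, and $\lambda_{\O}=\rho_{\O}$, after which naturality of $\initialmap$ and $\terminalmap$ makes $\O$ a zero object; and the identity $\coDiag_{A\biprod B}\circ(\iota_1\biprod\iota_2)=\id_{A\biprod B}$, which drives uniqueness in the (co)universal properties, genuinely needs clause~(3) together with symmetric monoidal coherence for the middle-four interchange and the unitors. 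The remaining verifications (that the given $\lambda,\rho,\alpha,\symm$ coincide with the coherence isomorphisms canonically induced by the reconstructed (co)product structure, so that the round trip really is the identity) are, as you say, routine consequences of naturality, coherence, and clause~(3).
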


The biproduct structure induced by~(\ref{BiproductBialgebraData}) has
coproduct diagrams 
$$\xymatrix@C40pt{
\ar@<1.5ex>@/^1em/[rr]^-{\inj1}
A & \hspace{-15mm}
\iso A\biprod\O \ar[r]^-{\id_A\biprod\initialmap_B} & A\biprod B &
\ar[l]_-{\initialmap_A\biprod\id_B} \O\biprod B\iso \hspace{-15mm} &
B 
\ar@<-1.5ex>@/_1em/[ll]_-{\inj2}
}$$
and product diagrams
$$
\xymatrix@C40pt{
A & 
\hspace{-15mm}
\iso A\biprod\O & 
\ar@<-1.5ex>@/_1em/[ll]_-{\proj1}
\ar[l]_-{\id_A\biprod\terminalmap_B} A\biprod B
\ar[r]^-{\terminalmap_A\biprod\id_B} 
\ar@<1.5ex>@/^1em/[rr]^-{\proj2}
& \O\biprod B\iso \hspace{-15mm} & B
}
$$

\begin{proposition}\label{KroneckerProperty}
In a category with biproduct structure~$(\O,\biprod)$, we have that
$$
(\xymatrix{A \ar[r]^-{\inj i} & A\biprod A \ar[r]^-{\proj j} & A})
=\left\{\begin{array}{ll}
  \id_A & \mbox{, if $i=j$}
  \\
  \O_{A,A} & \mbox{, if $i\not= j$}
\end{array}\right.$$
\end{proposition}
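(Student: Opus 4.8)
The plan is to evaluate the composite $\proj j\circ\inj i$ directly, by substituting the explicit descriptions of the injections and projections of the induced biproduct structure recorded just above the statement, and then simplifying using bifunctoriality of $\biprod$ and the fact that $\O$ is a zero object.

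I would first treat the diagonal cases. For $i=j=1$, unfolding the definitions exhibits $\proj1\circ\inj1$ as
\[
A\;\iso\;A\biprod\O\xrightarrow{\;\id_A\biprod\initialmap_A\;}A\biprod A\xrightarrow{\;\id_A\biprod\terminalmap_A\;}A\biprod\O\;\iso\;A ,
\]
where the two unlabelled isomorphisms are the (mutually inverse) right-unit coherence isomorphisms of $(\O,\biprod)$. By bifunctoriality of $\biprod$ the middle two arrows compose to $\id_A\biprod(\terminalmap_A\circ\initialmap_A)$; and since $\O$ is a zero object its only endomorphism is $\id_\O$, so $\terminalmap_A\circ\initialmap_A=\id_\O$ and the middle arrow is $\id_{A\biprod\O}$. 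Hence $\proj1\circ\inj1$ is a coherence isomorphism composed with its inverse, namely $\id_A$. The case $i=j=2$ is handled by the same computation, using instead the left-unit isomorphisms $\O\biprod A\iso A$.

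For the off-diagonal cases I would take $i=1$, $j=2$ (the case $i=2$, $j=1$ being symmetric). Unfolding the definitions and again using bifunctoriality, $\proj2\circ\inj1$ is seen to equal
\[
A\;\iso\;A\biprod\O\xrightarrow{\;\terminalmap_A\biprod\id_\O\;}\O\biprod\O\xrightarrow{\;\id_\O\biprod\initialmap_A\;}\O\biprod A\;\iso\;A ,
\]
so it factors through $\O\biprod\O$. Since $\O\biprod\O\iso\O$ via the unit isomorphism, $\O\biprod\O$ is itself a zero object, and any endomorphism of $A$ that factors through a zero object is a zero morphism: precomposing and postcomposing the factorisation with the isomorphism $\O\biprod\O\iso\O$ and invoking initiality and terminality of $\O$ rewrites it as $A\xrightarrow{\terminalmap_A}\O\xrightarrow{\initialmap_A}A$, which is $\O_{A,A}$ by definition.

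There is no genuine difficulty here; the argument is a short diagram chase. The only points demanding any attention are the bookkeeping of the unit coherence isomorphisms — in particular recognising that the isomorphism $A\iso A\biprod\O$ used in the definition of $\inj1$ is inverse to the isomorphism $A\biprod\O\iso A$ used in the definition of $\proj1$ — and the observation that a biproduct of zero objects is again a zero object, which is precisely what collapses the off-diagonal composite to $\O_{A,A}$.
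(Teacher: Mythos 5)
Your argument is correct: unfolding the stated definitions of $\inj i$ and $\proj j$, using bifunctoriality of $\biprod$ to collapse the middle composite, and then invoking that $\O$ (and hence $\O\biprod\O$) is a zero object is exactly the routine verification intended — the paper states this proposition without proof, treating it as immediate from the displayed coproduct and product diagrams. The two bookkeeping points you flag (that the unit isomorphisms in $\inj1$ and $\proj1$ are mutually inverse, and that a morphism factoring through the zero object $\O\biprod\O$ is $\O_{A,A}$) are indeed the only places where care is needed, and you handle both correctly.
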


\begin{lemma}\label{BiproductBialgebraStructure}
In a category with biproduct
structure~$(\Zero,\biprod;\initialmap,\coDiag;\terminalmap,\Diag)$,
the commutative monoid and comonoid structures
$(\initialmap,\coDiag;\terminalmap,\Diag)$ form a commutative
bialgebra.  That is, $\initialmap$ and $\coDiag$ are comonoid
homomorphisms and, equivalently, $\terminalmap$ and $\Diag$ are monoid
homomorphisms.
\begin{equation}\label{BiproductBialgebraOne}
\begin{minipage}{4cm}
\xymatrix{
& \IdX \ar[dr]^-{\terminalmap_A} &
\\
\Zero \ar[ru]^-{\initialmap_A} \ar[rr]_-{\id_\O} & & \Zero
}
\end{minipage}
\
\begin{minipage}{4cm}
\xymatrix@R20pt@C35pt{
\IdX\biprod\IdX \ar[r]^-{\coDiag_\IdX} \ar[d]_-{\Diag_\IdX\biprod\Diag_\IdX} &
\IdX \ar[r]^-{\Diag_\IdX} & \IdX\biprod\IdX \\
\IdX\biprod\IdX\biprod\IdX\biprod\IdX
\ar[rr]_-{\id_\IdX\biprod\symm_{\IdX,\IdX}\biprod\id_\IdX} 
& &
\IdX\biprod\IdX\biprod\IdX\biprod\IdX
\ar[u]_-{\coDiag_\IdX\biprod\coDiag_\IdX}
}
\end{minipage}
\end{equation}
\begin{equation}\label{BiproductBialgebraTwo}
\hspace{-45mm}
\begin{minipage}{4cm}
$\begin{array}{c}
\xymatrix@C=10pt@R=15pt{
& \IdX \ar[dr]^-{\Diag_\IdX} &
\\
\Zero \ar[ur]^-{\initialmap_\IdX} \ar[dr]|-\iso & & \IdX\biprod\IdX
\\
& \Zero\biprod\Zero \ar[ru]_-{\initialmap_\IdX\biprod\initialmap_\IdX}&
}
\qquad
\xymatrix@C=10pt@R=15pt{
& \IdX \ar[dr]^-{\terminalmap_\IdX} &
\\
\IdX\biprod\IdX \ar[ur]^-{\coDiag_\IdX}
\ar[dr]_-{\terminalmap_\IdX\biprod\terminalmap_\IdX} & & \Zero
\\
& \Zero\biprod\Zero \ar[ru]|-\iso &
}
\end{array}$
\end{minipage}
\end{equation}
\end{lemma}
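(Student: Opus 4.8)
The plan is to dispatch three of the four bialgebra diagrams at once and reserve the real work for the fourth. Observe first that the first diagram of~\eqref{BiproductBialgebraOne} equates two parallel morphisms $\O\to\O$, and that the two diagrams of~\eqref{BiproductBialgebraTwo} equate, respectively, two parallel morphisms $\O\to A\biprod A$ and two parallel morphisms $A\biprod A\to\O$; since $\O$ is both initial and terminal, in each of these cases there is only one morphism of the stated type, so the diagram commutes for free. Thus everything reduces to the second diagram of~\eqref{BiproductBialgebraOne}, the middle-four-interchange law $\Diag_A\circ\coDiag_A=(\coDiag_A\biprod\coDiag_A)\circ(\id_A\biprod\symm_{A,A}\biprod\id_A)\circ(\Diag_A\biprod\Diag_A)$.

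To prove that, I would use that $\biprod$ is simultaneously a product and a coproduct. Both sides are endomorphisms of $A\biprod A$, so by the product universal property it suffices to postcompose with the two projections and by the coproduct universal property it suffices to precompose with the two injections; hence it is enough to check that the four morphisms $\proj j\circ(-)\circ\inj i\colon A\to A$ (for $i,j\in\{1,2\}$) agree. On the left-hand side each such composite collapses to $\proj j\circ\Diag_A\circ\coDiag_A\circ\inj i$, which equals $\id_A$ because $\coDiag_A\circ\inj i=\id_A$ (an immediate consequence of the monoid unit law~\eqref{BiproductCommutativeMonoidOne}) and $\proj j\circ\Diag_A=\id_A$ (likewise of the comonoid counit law~\eqref{BiproductCommutativeCoMonoidOne}). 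For the right-hand side one evaluates the corresponding components by passing through the fourfold biproduct $A\biprod A\biprod A\biprod A$: the components of $\Diag_A\biprod\Diag_A$, of the middle symmetry $\id_A\biprod\symm_{A,A}\biprod\id_A$, and of $\coDiag_A\biprod\coDiag_A$ are all identities or zero maps by Proposition~\ref{KroneckerProperty}, and multiplying them out (equivalently, composing the corresponding matrices of endomorphisms of $A$) yields, on both sides, the $2\times2$ matrix all of whose entries are $\id_A$. This bookkeeping through the fourfold biproduct is the one place where a computation is required, and I expect it---though entirely routine---to be the main obstacle.

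Finally, the equivalence of the three phrasings of the conclusion is a matter of unwinding definitions. The zero object carries the comonoid structure $(\terminalmap_\O,\Diag_\O)=(\id_\O,\,\O\iso\O\biprod\O)$, and by part~(3) of Proposition~\ref{AlgebraicBiproductStructure} the biproduct $A\biprod A$ carries the comonoid structure whose counit is $A\biprod A\xrightarrow{\terminalmap_A\biprod\terminalmap_A}\O\biprod\O\iso\O$ and whose comultiplication is $(\id_A\biprod\symm_{A,A}\biprod\id_A)\circ(\Diag_A\biprod\Diag_A)$---the swap isomorphism occurring in that clause being, by coherence, precisely the middle symmetry---and dually for the monoid structures on $\O$ and on $A\biprod A$. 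Substituting these into the requirement that $\initialmap_A\colon\O\to A$ and $\coDiag_A\colon A\biprod A\to A$ be comonoid homomorphisms reproduces exactly the four displayed diagrams, and substituting them into the requirement that $\terminalmap_A\colon A\to\O$ and $\Diag_A\colon A\to A\biprod A$ be monoid homomorphisms reproduces the same four diagrams; hence the three statements coincide and the lemma follows.
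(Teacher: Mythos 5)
The paper states this lemma without proof, so there is nothing to compare against; judged on its own, your argument is correct and is the standard one the paper leaves implicit. The reduction of the first diagram of~(\ref{BiproductBialgebraOne}) and both diagrams of~(\ref{BiproductBialgebraTwo}) to the initiality/terminality of $\O$ is exactly right, and your component check of the interchange law $\Diag_A\coDiag_A=(\coDiag_A\biprod\coDiag_A)(\id_A\biprod\symm_{A,A}\biprod\id_A)(\Diag_A\biprod\Diag_A)$ via $\proj{j}\circ(-)\circ\inj{i}$ does yield $\id_A$ in all four cases on both sides (the matrix bookkeeping you defer to is legitimate, since the convolution enrichment gives bilinearity of composition and $\sum_k\inj{k}\proj{k}=\id$, and Proposition~\ref{KroneckerProperty} supplies the entries). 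The unwinding of the three equivalent phrasings via Proposition~\ref{AlgebraicBiproductStructure}(3) is likewise correct.
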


\paragraph{Linear-algebraic structure.}

We examine the linear-algebraic structure of categories with biproduct
structure.  This I~present in the language of enriched category
theory~\cite{KellyEnrichedCT}.

Let 
$\Mon$ ($\CMon$) be the symmetric monoidal category of (commutative)
monoids with respect to the universal bilinear tensor product.
Recall that\linebreak \mbox{$\Mon$-categories} ($\CMon$-categories)
are categories all of whose homs~$[A,B]$ come equipped with a
(commutative) monoid structure 
$$
\z_{A,B}\in[A,B]
\enspace ,\quad 
\plus_{A,B}:[A,B]^2\rightarrow[A,B]
$$ 
such that composition is strict and bilinear; that is,
\begin{center}
$\z_{B,C}\icomp f = \z_{A,C}$
\qquad and \qquad
$f \icomp \z_{C,A} = \z_{C,B}$
\end{center}
for all $f: A\rightarrow B$, and
\begin{center}
  $g \icomp (f\plus_{A,B} f') = g\icomp f \plus_{A,C} g\icomp f'$
\qquad and \qquad
$(g\plus_{B,C} g')\icomp f = g\icomp f \plus_{A,C} g'\icomp f$
\end{center}
for all $f,f':A\rightarrow B$ and $g,g':B\rightarrow C$.

\begin{proposition}\label{Mon-Enrichment}
The following are equivalent.
\begin{enumerate}
\item
Categories with biproduct structure.

\item
$\Mon$-categories with \emph{(}necessarily enriched\emph{)} finite
products.

\item
$\CMon$-categories with \emph{(}necessarily enriched\emph{)} finite
products.
\end{enumerate}
\end{proposition}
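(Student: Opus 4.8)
The plan is to prove the cycle of implications $(1)\Rightarrow(3)\Rightarrow(2)\Rightarrow(1)$ and to observe that the constructions witnessing them are mutually inverse, so that the three kinds of structure are in bijective correspondence (not merely equivalent). The implication $(3)\Rightarrow(2)$ is immediate, since a commutative monoid is a monoid and the (enriched) finite products are retained. I also record a fact used twice below: in any $\Mon$-category with finite products, those products are \emph{automatically} enriched, because the comparison $[X,A\times B]\to[X,A]\times[X,B]$ is an isomorphism of underlying sets that is additive (each projection is, as composition is bilinear), while $[X,1]$ is a one-element monoid; this is the force of the parenthetical ``necessarily'' in $(2)$ and $(3)$.

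For $(1)\Rightarrow(3)$: presenting the biproduct structure algebraically as in Proposition~\ref{AlgebraicBiproductStructure} by $(\O,\biprod;\initialmap,\coDiag;\terminalmap,\Diag)$, I would equip each hom with $\z_{A,B}=\O_{A,B}$ and the convolution sum $f\plus g=\coDiag_B\circ(f\biprod g)\circ\Diag_A$. That this is a commutative monoid on $[A,B]$ is a purely formal consequence of the monoid laws for $(\O,\coDiag)$ and the comonoid laws for $(\terminalmap,\Diag)$ from Proposition~\ref{AlgebraicBiproductStructure}: associativity from (co)associativity, unitality from the (co)unit laws, and commutativity from~(\ref{BiproductCommutativeMonoidTwo}) and~(\ref{BiproductCommutativeCoMonoidTwo}) (via naturality of $\symm$). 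Strictness of composition on zeros is exactly $\O$ being a zero object, and bilinearity $g\circ(f\plus f')=g\circ f\plus g\circ f'$ (and dually) follows from naturality of $\coDiag$ and of $\Diag$. Thus $\Spaces$ is a $\CMon$-category; being bicartesian it has finite products, which are then enriched by the fact recorded above.

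For $(2)\Rightarrow(1)$: let $\Spaces$ be a $\Mon$-category with terminal object $1$ and binary products. First I would show $1$ is also initial, hence a zero object $\O$: since $[1,1]$ is a one-element monoid, $\z_{1,1}=\id_1$, and then for any $f\colon1\to A$ strictness gives $f=f\circ\z_{1,1}=\z_{1,A}$ (the first equality because $\z_{1,1}=\id_1$). Next I would realise $A\times B$ as a coproduct of $A$ and $B$ with injections $\inj1=\pair{\id_A,\O_{A,B}}$, $\inj2=\pair{\O_{B,A},\id_B}$: the copairing of $f\colon A\to C$ and $g\colon B\to C$ is $[f,g]=f\circ\proj1\plus g\circ\proj2$; the equations $[f,g]\circ\inj i$ collapse to $f$ and $g$ using that $\proj i\circ\inj j$ is $\id$ or $\O$ according as $i=j$ or not (as in Proposition~\ref{KroneckerProperty}) together with strictness; and uniqueness of $[f,g]$ follows from the identity $\inj1\circ\proj1\plus\inj2\circ\proj2=\id_{A\times B}$, checked by post-composing with each projection and invoking the universal property of the product. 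Hence $\Spaces$ is bicartesian with a zero object and the canonical map $A+B\to A\times B$ is an identity, i.e.\ $\Spaces$ has biproducts. Finally, to see the passages are mutually inverse, one computes that the given enrichment is forced to be the convolution one: $\coDiag_B=[\id_B,\id_B]=\proj1\plus\proj2$, $\Diag_A=\pair{\id_A,\id_A}$, and $\coDiag_B\circ(f\times g)\circ\Diag_A=f\plus g$; in particular this sum is automatically commutative, since $\coDiag_B\circ\symm=\coDiag_B$, $\symm\circ\Diag_A=\Diag_A$, and naturality of $\symm$ interchanges $f\times g$ with $g\times f$.

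I expect the crux to be the direction $(2)\Rightarrow(1)$, and within it the two uniqueness arguments --- uniqueness of copairings, and the consequent rigidity (hence commutativity) of the enrichment --- both resting on the single identity $\inj1\circ\proj1\plus\inj2\circ\proj2=\id_{A\times B}$. The remaining verifications are routine diagram chases with the axioms of Propositions~\ref{AlgebraicBiproductStructure} and~\ref{KroneckerProperty} and the bilinearity and strictness of $\Mon$-enriched composition.
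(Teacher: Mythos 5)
Your proof is correct and follows the route the paper intends: the paper states this proposition without proof but immediately records the convolution enrichment $\z_{A,B}=\O_{A,B}$, $f\plus_{A,B} g=\coDiag_B\icomp(f\biprod g)\icomp\Diag_A$, which is exactly your construction for $(1)\Rightarrow(3)$, and your converse direction (the terminal object is a zero object, products become coproducts via $[f,g]=f\icomp\proj1\plus g\icomp\proj2$ with uniqueness from $\inj1\icomp\proj1\plus\inj2\icomp\proj2=\id$, whence the enrichment is forced and automatically commutative) is the standard complementary argument. No gaps.
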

The enrichment of categories with biproduct
structure~$(\O,\biprod;\initialmap,\coDiag;\terminalmap,\Diag)$ is given
by \emph{convolution}~(see~\eg~\cite{Sweedler}) as follows:
$$\begin{array}{l}
\z_{A,B}
\ = \
\xymatrix{
  ( A \ar[r]^-{\terminalmap_A} & \Zero \ar[r]^-{\initialmap_B} & B ) }
\ = \
\O_{A,B}
\\
f+_{A,B}g
\ = \
\xymatrix{
  ( A \ar[r]^-{\Diag_A} & A\biprod A \ar[r]^-{f\biprod g} & B\biprod B
  \ar[r]^-{\coDiag_B} & B )
}
\end{array}$$

\begin{proposition}
In a category with biproduct structure, $\coDiag_A = \proj1+\proj2:A\biprod
A\rightarrow A$ and $\Diag_A = \inj1+\inj2:A\rightarrow A\biprod A$.
\end{proposition}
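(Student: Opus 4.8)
The plan is to establish each of the two equalities by appealing to the universal property of the biproduct $A\biprod A$, which is simultaneously the coproduct of two copies of $A$ (with injections $\inj1,\inj2$) and their product (with projections $\proj1,\proj2$), as recalled in the coproduct and product diagrams displayed above. Throughout I will use the convolution enrichment (Proposition~\ref{Mon-Enrichment}): the homs carry a commutative-monoid structure with zero $\z=\O$, and composition is bilinear with respect to it.

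For $\coDiag_A=\proj1+\proj2$, note that both sides are maps $A\biprod A\to A$, so by the universal property of the coproduct it suffices to check that they agree after precomposition with $\inj1$ and with $\inj2$. Unfolding $\inj1$ as $A\iso A\biprod\O\xrightarrow{\id_A\biprod\initialmap_A}A\biprod A$ and applying the right-hand unit triangle of the commutative monoid $(A,\initialmap_A,\coDiag_A)$ in~(\ref{BiproductCommutativeMonoidOne}) gives $\coDiag_A\circ\inj1=\id_A$, and symmetrically $\coDiag_A\circ\inj2=\id_A$ from the left-hand triangle. On the other side, bilinearity of composition yields $(\proj1+\proj2)\circ\inj1=(\proj1\circ\inj1)+(\proj2\circ\inj1)$, and Proposition~\ref{KroneckerProperty} together with the fact that $\z_{A,A}=\O_{A,A}$ is the unit of $([A,A],+)$ reduces the right-hand side to $\id_A+\z_{A,A}=\id_A$; the computation with $\inj2$ is identical. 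Hence the two maps agree on both injections and are therefore equal.

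For $\Diag_A=\inj1+\inj2$ I would run the formally dual argument. Both sides are maps $A\to A\biprod A$, so by the universal property of the product it suffices to check agreement after postcomposition with $\proj1$ and with $\proj2$. Unfolding $\proj1$ as $A\biprod A\xrightarrow{\id_A\biprod\terminalmap_A}A\biprod\O\iso A$ and using the counit triangles of the commutative comonoid $(A,\terminalmap_A,\Diag_A)$ in~(\ref{BiproductCommutativeCoMonoidOne}) gives $\proj1\circ\Diag_A=\id_A$ and $\proj2\circ\Diag_A=\id_A$, while bilinearity of composition and Proposition~\ref{KroneckerProperty} give $\proj1\circ(\inj1+\inj2)=\id_A+\z_{A,A}=\id_A$ and likewise for $\proj2$. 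So the two maps agree after both projections and are equal.

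I do not anticipate a real obstacle: the only delicate point is the bookkeeping of the structural isomorphisms $A\iso A\biprod\O$ and $A\iso\O\biprod A$ that mediate between the biproduct injections/projections and the data $(\initialmap,\coDiag,\terminalmap,\Diag)$, together with the observation that each (co)unit triangle produces precisely the inverse of the isomorphism occurring in the relevant injection or projection. Everything else is a direct application of the (co)unit laws and of Propositions~\ref{Mon-Enrichment} and~\ref{KroneckerProperty}.
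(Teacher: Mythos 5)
Your proof is correct, and since the paper states this proposition without proof it is exactly the argument one is expected to supply: reduce each identity to the uniqueness clause of the coproduct (resp.\ product) universal property, evaluate $\coDiag_A$ on the injections via the unit triangles of~(\ref{BiproductCommutativeMonoidOne}) and $\proj1+\proj2$ via bilinearity together with Proposition~\ref{KroneckerProperty}, and dualise. The one delicate point --- that the unitor appearing in the monoid unit law is the inverse of the structural isomorphism built into $\inj1$ and $\inj2$ --- is correctly flagged and handled.
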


We now consider biproduct structure on symmetric monoidal categories.  To this
end, note that in a monoidal category with tensor~$\tensor$ and binary
products~$\product$ there is a natural distributive law as follows:
\[
\dlaw_{A,B,C} = \pair{\proj1\tensor\id_C,\proj2\tensor\id_C}:
  (\X\product\Y)\tensor\Z \rightarrow (\X\tensor\Z)\product (\Y\tensor\Z)
\]

\begin{definition}
A biproduct structure~$(\Zero,\biprod;\initialmap,\coDiag;\terminalmap,\Diag)$
and a symmetric monoidal structure~$(\tensorunit,\tensor)$ on a category are
\Def{compatible} whenever the following hold:
\[
\xymatrix@C=10pt@R=15pt{
& \Zero\tensor\Z \ar[dr]^-{\initialmap_A\tensor\id}
\ar[dd]|-{\terminalmap_{\O\tensor C}} & 
\\
\X\tensor\Z\ar[ur]^{\terminalmap_A\tensor\id_C}
\ar[dr]_-{\terminalmap_{A\tensor C}} & &
\X\tensor\Z
\\
& \Zero \ar[ur]_-{\initialmap_{A\tensor C}} &
}
\qquad\quad
\xymatrix@C=10pt@R=15pt{
& (\X\biprod\X)\tensor\Z \ar[dd]^-{\dlaw_{A,A,C}}
\ar[dr]^-{\coDiag_A\tensor\id_C} & \\
\X\tensor\Z \ar[ur]^-{\Diag_A\tensor\id} \ar[dr]_(.4){\Diag_{A\tensor C}} &  &
\X\tensor\Z
\\
& (\X\tensor\Z) \biprod (\X\tensor\Z) \ar[ur]_(.6){\coDiag_{A\tensor C}} &
}
\]
\end{definition}

Proposition~\ref{Mon-Enrichment} extends to the symmetric monoidal setting.
Recall that a
\Def{$\Mon$-enriched \emph{(}symmetric\emph{)} monoidal category} is a
(symmetric) monoidal category with a $\Mon$-enrichment for which the
tensor is strict and bilinear; that is, such that
\[
\z_{X,Y}\tensor f = \z_{X\tensor A,Y\tensor B}
\qquad \mbox{and} \qquad
f \tensor \z_{X,Y} = \z_{A\tensor X,B\tensor Y}
\]
for all $f: A\rightarrow B$, and
\[
g \tensor (f\plus f') = g\tensor f \plus g\tensor f'
\qquad \mbox{and} \qquad
(g\plus g')\tensor f = g\tensor f \plus g'\tensor f
\]
for all $f,f':A\rightarrow B$ and $g,g':X\rightarrow Y$.

\begin{proposition}
The following are equivalent.
\begin{enumerate}
\item
Categories with compatible biproduct and symmetric monoidal structures.

\item
$\Mon$-enriched symmetric monoidal categories with \emph{(}necessarily
enriched\emph{)} finite products.

\item
$\CMon$-enriched symmetric monoidal categories with \emph{(}necessarily
enriched\emph{)} finite products.
\end{enumerate}
\end{proposition}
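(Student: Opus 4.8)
The plan is to bootstrap from Proposition~\ref{Mon-Enrichment}, which settles the non-monoidal case and, crucially, pins the enrichment down canonically: a category with biproduct structure carries the \emph{convolution} enrichment $\z_{A,B}=\O_{A,B}$, $f\plus g=\coDiag_B\circ(f\biprod g)\circ\Diag_A$ (automatically commutative), and conversely a $\Mon$-enrichment together with enriched finite products reconstructs the biproduct data, with $\terminalmap_A$, $\initialmap_A$ the unique maps to and from $\O$ and, by the Proposition that $\Diag_A=\inj1\plus\inj2$ and $\coDiag_A=\proj1\plus\proj2$, the diagonals expressed through coprojections and projections. Granting that, what remains is to prove that for a symmetric monoidal structure sitting on this data, \emph{compatibility} in the sense of the Definition is equivalent to the tensor being \emph{strict and bilinear}. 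Since the enrichment supplied by Proposition~\ref{Mon-Enrichment} is commutative, (2) and (3) say the same thing once (1)$\Leftrightarrow$(2) is established, and the ``necessarily enriched'' clause for the products is inherited verbatim; so I would argue only the $\Mon$ case.

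For (1)$\Rightarrow$(2), assume compatibility. Instantiating the outer boundary of the first compatibility diagram at $A:=\O$ yields $\O_{\O\tensor C,\O\tensor C}=\id_\O\tensor\id_C=\id_{\O\tensor C}$, so $\O\tensor C$, and symmetrically $C\tensor\O$, is a zero object. Hence $\z_{X,Y}\tensor\id_C=\O_{X,Y}\tensor\id_C$ factors through $\O\tensor C$ and so equals $\z_{X\tensor C,Y\tensor C}$; then $\z_{X,Y}\tensor f=(\id_Y\tensor f)\circ(\z_{X,Y}\tensor\id_A)=\z_{X\tensor A,Y\tensor B}$ by strictness of composition, giving first-variable strictness of $\tensor$. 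For additivity, expanding $g\plus g'$ by convolution and then using the two triangles of the second compatibility diagram together with naturality of $\dlaw_{-,-,C}$ telescopes $(g\plus g')\tensor\id_C$ to $(g\tensor\id_C)\plus(g'\tensor\id_C)$, whence first-variable additivity as before. Strictness and additivity in the second variable follow formally by conjugating with the symmetry $\symm$: being a natural isomorphism, $\symm$ commutes with composition and thus transports both properties across the two tensor slots. Finally Proposition~\ref{Mon-Enrichment} endows the category with precisely this convolution enrichment, so (1) produces a $\Mon$-enriched symmetric monoidal category with enriched finite products.

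For (2)$\Rightarrow$(1), Proposition~\ref{Mon-Enrichment} gives such a category a biproduct structure whose convolution enrichment is the one it started with, and it remains to verify the two compatibility diagrams. For the first, strictness of $\tensor$ already forces $\z_{\O\tensor C,\O\tensor C}=\z_{\O,\O}\tensor\id_C=\id_{\O\tensor C}$, so $\O\tensor C$ is a zero object; every edge of the diagram touching $\O$ or $\O\tensor C$ is then the unique zero map, and the one remaining square reads $\z_{A\tensor C,A\tensor C}=\z_{A,A}\tensor\id_C$, i.e.\ strictness again. For the second, I would substitute $\Diag_A=\inj1\plus\inj2$, $\coDiag_A=\proj1\plus\proj2$, $\dlaw_{A,A,C}=\pair{\proj1\tensor\id_C,\proj2\tensor\id_C}$, expand using bilinearity of $\tensor$ and of composition, and compute $\dlaw_{A,A,C}\circ(\inj i\tensor\id_C)=\pair{(\proj1\circ\inj i)\tensor\id_C,(\proj2\circ\inj i)\tensor\id_C}$, which by Proposition~\ref{KroneckerProperty} is the $i$-th coprojection $\inj i$ of $(A\tensor C)\biprod(A\tensor C)$; summing over $i$ gives $\dlaw\circ(\Diag_A\tensor\id_C)=\inj1\plus\inj2=\Diag_{A\tensor C}$, and dually $\coDiag_{A\tensor C}\circ\dlaw=(\proj1\tensor\id_C)\plus(\proj2\tensor\id_C)=\coDiag_A\tensor\id_C$; these are the two triangles, and the outer square follows from them.

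I expect the difficulty to be bookkeeping rather than anything conceptual. In the forward direction the one genuinely non-routine point is noticing that each compatibility diagram supplies only first-variable strictness and additivity, so the symmetry must be brought in to transfer them to the other variable; in the backward direction it is the unwinding of $\dlaw$ via Proposition~\ref{KroneckerProperty} together with the identifications $\inj i=\pair{\id,\z}$ and $\proj i$ being the biproduct projections. Throughout, one should remember that ``$\Mon$-enriched symmetric monoidal'' here asks only that $\tensor$ absorb $\z$ and distribute over $\plus$, not that the monoidal structure itself be strict, so no monoidal-coherence subtleties intervene beyond those already folded into Proposition~\ref{Mon-Enrichment}.
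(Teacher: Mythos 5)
Your argument is correct. The paper itself gives no proof of this proposition — it merely remarks that Proposition~\ref{Mon-Enrichment} ``extends to the symmetric monoidal setting'' — and your proposal supplies exactly the intended extension: reduce to Proposition~\ref{Mon-Enrichment} for the underlying biproduct/enrichment correspondence, then check that the two compatibility diagrams are equivalent to strictness and bilinearity of $\tensor$ in one variable (the other variable being handled by conjugation with $\symm$), with the key computations being that $\O\tensor C$ is a zero object and that $\dlaw$ intertwines $\Diag_A\tensor\id_C,\,\coDiag_A\tensor\id_C$ with $\Diag_{A\tensor C},\,\coDiag_{A\tensor C}$ via Proposition~\ref{KroneckerProperty}.
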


\begin{definition}
A category with compatible biproduct and symmetric monoidal structures is
referred to as a \emph{category of spaces and linear maps}.
\end{definition}

\subsection{Fock space}
\label{BosonicFockSpace}

\paragraph{Strong-monoidal functorial structure.}

A \emph{strong monoidal functor}~$(F,\phi,\varphi):(\lscat
C,\I,\tensor)\rightarrow(\lscat C',\I',\tensor')$ between monoidal categories
consists of a functor~$F:\lscat C\rightarrow \lscat C'$, an isomorphism
$\phi:\I'\iso F(\I)$, and a natural isomorphism $\varphi_{A,B}:FA\tensor'
FB\iso F(A\tensor B)$ subject to the coherence conditions below.
$$
\xymatrix@C35pt{
\ar[d]_-{\id_{FC}\tensor'\phi}
FC\tensor'\I' \ar[r]^-{\rho'_{FC}} & FC
\\
FC\tensor'F\I \ar[r]_-{\varphi_{C,\I}} & F(C\tensor\I) \ar[u]_-{F\rho_C}
}
\qquad
\xymatrix@C35pt{
\ar[d]_-{\phi\tensor'\id_{FC}}
\I' \tensor' FC \ar[r]^-{\lambda'_{FC}} & FC
\\
F\I \tensor' FC \ar[r]_-{\varphi_{\I,C}} & F(\I\tensor C) \ar[u]_-{F\lambda_C}
}
$$
$$\xymatrix@C45pt{
\ar[d]_-{\varphi_{A,B}\tensor'\id_{FC}}
(F A\tensor' FB)\tensor' FC \ar[r]^-{\alpha_{FA,FB,FC}} 
& F A\tensor' (FB\tensor' FC) \ar[r]^-{\id_{FA}\tensor'\varphi_{A,B}}
& F A\tensor' F(B\tensor C)
\ar[d]^-{\varphi_{A,B\tensor C}}
\\
F (A\tensor B)\tensor' FC \ar[r]_-{\varphi_{A\tensor B},C} &
F \big((A\tensor B)\tensor C\big) \ar[r]_-{F\alpha_{A,B,C}} &
F \big(A\tensor (B\tensor C)\big) 
& 
}$$
$$\xymatrix@C35pt{
\ar[d]_-{\sigma_{FA,FB}}
FA\tensor'FB \ar[r]^-{\varphi_{A,B}} & F(A\tensor B) \ar[d]^-{F(\sigma_{A,B})}
\\
FB\tensor'FA \ar[r]_-{\varphi_{B,A}} & F(B\tensor A) 
}$$

\begin{definition}
A strong monoidal
functor~$(\Spaces,\O,\biprod)\rightarrow(\Spaces,\I,\tensor)$ for a category
of spaces and linear maps~$\Spaces$ is referred to as a \emph{(bosonic or
symmetric) Fock-space construction}.  
\end{definition}

The Fock-space construction supports operations for \emph{initialising}
and \emph{merging}~$(\banginitialmap,\bangcoDiag)$, and for
\emph{finalising} and \emph{splitting}~$(\bangterminalmap,\bangDiag)$.

\begin{definition}
For a Fock-space construction on a category of spaces and linear maps, set:
$$\begin{array}{ll}
\banginitialmap_\IdX = 
( \xymatrix{\tensorunit \iso \bang\Zero \ar[r]^-{\bang\initialmap_\IdX} & \bang\IdX} )
\enspace , & \
\bangcoDiag_\IdX = 
( \xymatrix@C=10pt{\bang\IdX\tensor\bang\IdX \iso
\bang(\IdX\biprod\IdX)\ar[rr]^-{\bang\coDiag_\IdX} && \bang\IdX })
\\
\bangterminalmap_\IdX  = 
( \xymatrix{\Fock A \ar[r]^-{\Fock\terminalmap_\IdX} & \Fock\O \iso \I} )
\enspace , & \
\bangDiag_\IdX =  
( \xymatrix{\Fock A \ar[r]^-{\Fock\Diag_\IdX} & \Fock(A\biprod A)\iso\Fock
A\tensor\Fock A})
\end{array}$$
\hide{
$$\begin{array}{rcl}
\banginitialmap_\IdX & = & 
( \xymatrix{\tensorunit \iso \bang\Zero \ar[r]^-{\bang\initialmap_\IdX} & \bang\IdX} )
\\
\bangcoDiag_\IdX & = & 
( \xymatrix@C=10pt{\bang\IdX\tensor\bang\IdX \iso
\bang(\IdX\biprod\IdX)\ar[rr]^-{\bang\coDiag_\IdX} && \bang\IdX }
)
\\[2mm]
\bangterminalmap_\IdX & = & 
( \xymatrix{\Fock A \ar[r]^-{\Fock\terminalmap_\IdX} & \Fock\O \iso \I} )
\\
\bangDiag_\IdX & = & 
( \xymatrix{\Fock A \ar[r]^-{\Fock\Diag_\IdX} & \Fock(A\biprod A)\iso\Fock
A\tensor\Fock A})
\end{array}$$
}
\end{definition}

The commutative bialgebra structure induced by the biproduct structure
yields commutative bialgebraic structure on Fock space.

\begin{lemma}
For a Fock-space construction~$\Fock$ on a category of spaces and linear
maps, the natural transformations
\begin{equation}\label{FockSpaceCommutativeBialgebraData}
\hfill
\begin{minipage}{4cm}\xymatrix@C=20pt@R=5pt{
\tensorunit \ar[dr]^-{\banginitialmap_\IdX} & & \tensorunit
\\
& \bang\IdX \ar[dr]_-{\bangDiag_\IdX} \ar[ur]^-{\bangterminalmap_\IdX} & 
\\
\bang\IdX\tensor\bang\IdX \ar[ru]_-{\bangcoDiag_\IdX} & &
\bang\IdX\tensor\bang\IdX
}\end{minipage}
\hfill\hfill
\end{equation}
form a commutative bialgebra.
\end{lemma}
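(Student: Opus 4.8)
The plan is to reduce the claimed commutative bialgebra structure on Fock space to the commutative bialgebra structure on the biproduct, which is available by Lemma~\ref{BiproductBialgebraStructure}, transported along the strong monoidal functor~$\Fock$. The key observation is that a strong symmetric monoidal functor sends commutative monoids to commutative monoids, commutative comonoids to commutative comonoids, and bialgebras to bialgebras, because all the defining diagrams are built from the structure maps, the functor~$\Fock$, and the coherence isomorphisms~$\phi,\varphi$, which interact correctly by the strong-monoidal coherence conditions. So the proof is essentially a transport-of-structure argument, with the one twist that the domain monoidal structure is the biproduct~$(\O,\biprod)$ rather than an abstract tensor.

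First I would record that, since $\Fock$ is strong symmetric monoidal from $(\O,\biprod)$ to $(\I,\tensor)$, applying $\Fock$ to the commutative monoid $(A,\initialmap_A,\coDiag_A)$ of~(\ref{BiproductBialgebraData}) and composing with the coherence isomorphisms $\phi\colon\I\iso\Fock\O$ and $\varphi_{A,A}\colon\Fock A\tensor\Fock A\iso\Fock(A\biprod A)$ yields precisely the data $(\banginitialmap_A,\bangcoDiag_A)$ of~(\ref{FockSpaceCommutativeBialgebraData}); dually, applying $\Fock$ to the commutative comonoid $(A,\terminalmap_A,\Diag_A)$ yields $(\bangterminalmap_A,\bangDiag_A)$. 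That these composites satisfy the monoid (resp.\ comonoid) axioms is a diagram chase: each of the unit, associativity, and commutativity squares for $\bangcoDiag$, $\banginitialmap$ factors as the $\Fock$-image of the corresponding square~(\ref{BiproductCommutativeMonoidOne}), (\ref{BiproductCommutativeMonoidTwo}) for $\coDiag$, $\initialmap$ — which commutes by Proposition~\ref{AlgebraicBiproductStructure} — glued along the coherence pentagons/triangles/hexagon for $(\Fock,\phi,\varphi)$. The commutativity axiom in particular uses the last coherence square, relating $\varphi$ with the symmetries $\symm$. The comonoid side is formally dual.

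Next I would treat the bialgebra (compatibility) axioms~(\ref{BiproductBialgebraOne}), (\ref{BiproductBialgebraTwo}): that $\banginitialmap$ and $\bangcoDiag$ are comonoid homomorphisms with respect to $(\bangterminalmap,\bangDiag)$. Again, each such diagram is the $\Fock$-image of the corresponding biproduct bialgebra diagram of Lemma~\ref{BiproductBialgebraStructure}, after inserting coherence isomorphisms. The only point needing a little care is the ``$\bangcoDiag$ is a comonoid homomorphism'' square, which involves a fourfold tensor $\bang A\tensor\bang A\tensor\bang A\tensor\bang A$ and the middle symmetry $\id\tensor\symm\tensor\id$; I would check that the two evident ways of identifying $(\Fock A\tensor\Fock A)\tensor(\Fock A\tensor\Fock A)$ with $\Fock\big((A\biprod A)\biprod(A\biprod A)\big)$ agree, which is exactly what the associativity and symmetry coherence conditions for a strong symmetric monoidal functor guarantee, and then that the interchange of symmetries on the $\Fock$ side matches $\Fock$ applied to the biproduct-side interchange.

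The main obstacle is bookkeeping rather than mathematical depth: one must keep track of the coherence isomorphisms $\phi$ and $\varphi$ threading through every diagram, and verify that the canonical isomorphisms $\bang(\IdX\biprod\IdX)\iso\bang\IdX\tensor\bang\IdX$ used in Definition is compatible with iterated instances (e.g.\ $\Fock$ of a threefold biproduct versus a threefold tensor of $\Fock$'s). The clean way to package this — and the way I would present it — is to invoke the general fact that a strong symmetric monoidal functor induces a (2-)functor on the categories of commutative-bialgebra objects; the lemma is then the instance of this applied to the biproduct bialgebra of Lemma~\ref{BiproductBialgebraStructure}, with the structure maps of~(\ref{FockSpaceCommutativeBialgebraData}) being, by construction in the preceding definitions, exactly the transported ones. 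This reduces the proof to citing Lemma~\ref{BiproductBialgebraStructure} plus the coherence theorem for strong monoidal functors, leaving only the routine identification of data.
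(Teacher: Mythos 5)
Your proposal is correct and follows essentially the same route as the paper, which likewise obtains the Fock-space bialgebra by applying $\Fock$ to the biproduct bialgebra diagrams of Lemma~\ref{BiproductBialgebraStructure} and invoking the coherence conditions of strong symmetric monoidal functors to match up the structure maps $(\banginitialmap,\bangcoDiag,\bangterminalmap,\bangDiag)$ with the transported ones. Your packaging of the argument as ``strong symmetric monoidal functors preserve commutative bialgebra objects'' is exactly the general fact the paper implicitly uses.
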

Indeed, by means of the coherence conditions of strong monoidal functors,
the application of $\Fock$ to the
diagrams~(\ref{BiproductCommutativeMonoidOne}--
\ref{BiproductBialgebraTwo}) yields the commutativity of the diagrams
below.
$$
\xymatrix@C35pt{
\I\tensor\Fock A \ar[r]^-{\banginitialmap_A\tensor\id_{\Fock A}}
\ar[dr]|-\iso & \Fock A\tensor\Fock A \ar[d]|-{\bangcoDiag_A} & \Fock
A\tensor\I \ar[l]_-{\id_{\Fock A}\tensor\banginitialmap_A} \ar[dl]|-\iso
\\
& \Fock A & 
}
\enspace
\xymatrix@C45pt{
\Fock A\tensor\Fock A\tensor\Fock A
\ar[d]_-{\bangcoDiag_A\tensor\id_{\Fock A}}
\ar[r]^-{\id_{\Fock A}\tensor\bangcoDiag_A} 
& \Fock A\tensor\Fock A \ar[d]^-{\bangcoDiag_A}
\\ 
\Fock A\tensor\Fock A \ar[r]_-{\bangcoDiag_A} & \Fock A
}
$$
$$
\xymatrix{
\Fock A\tensor\Fock A \ar[rr]^-{\symm_{\Fock A,\Fock A}}
\ar[dr]_-{\bangcoDiag_A} & & \Fock A\tensor\Fock A
\ar[dl]^-{\bangcoDiag_A} \\ & \Fock A &
}
\quad
\xymatrix{
& \ar[dl]_-{\bangDiag_A} \Fock A \ar[dr]^-{\bangDiag_A} & 
\\
\Fock A\tensor\Fock A \ar[rr]_-{\symm_{\Fock A,\Fock A}} & & \Fock
A\tensor\Fock A
}
$$
$$
\xymatrix@C35pt{
& \ar[dl]|-\iso \Fock A \ar[d]|-{\bangDiag_A} \ar[dr]|-\iso & 
\\
\I\tensor\Fock A & \ar[l]^-{\bangterminalmap_{\Fock A}\tensor\id_{\Fock A}}
\Fock A\tensor\Fock A
\ar[r]_-{\id_{\Fock A}\tensor\bangterminalmap_A} & \Fock A\tensor\I
}
\qquad
\xymatrix@C45pt{
\IdX \ar[r]^-{\Diag_\IdX} \ar[d]_-{\Diag_\IdX} &
\IdX\biprod\IdX
\ar[d]^-{\id_\IdX\biprod\Diag_\IdX} 
\\
\IdX\biprod\IdX \ar[r]_-{\Diag_\IdX\biprod\id_\IdX} &
\IdX\biprod\IdX\biprod\IdX
&
}
$$
$$
\begin{minipage}{4cm}
\xymatrix{
& \Fock A\ar[dr]^-{\bangterminalmap_A} &
\\
\I \ar[ru]^-{\banginitialmap_A} \ar[rr]_-{\id_\I} & & \I
}
\end{minipage}
\enspace
\begin{minipage}{4cm}
\xymatrix@R20pt@C35pt{
\Fock A\tensor\Fock A \ar[r]^-{\bangcoDiag_A}
\ar[d]_-{\bangDiag_A\tensor\bangDiag_A} &
\Fock A\ar[r]^-{\bangDiag_A} & \Fock A\tensor\Fock A \\
\Fock A\tensor\Fock A\tensor\Fock A\tensor\Fock A
\ar[rr]_-{\id_{\Fock A}\tensor\symm_{\Fock A,\Fock A}\tensor\id_{\Fock A}} 
& &
\Fock A\tensor\Fock A\tensor\Fock A\tensor\Fock A
\ar[u]_-{\bangcoDiag_A\tensor\bangcoDiag_A}
}
\end{minipage}
$$
$$
\xymatrix@C=10pt@R=15pt{
& \Fock A \ar[dr]^-{\bangDiag_A} &
\\
\I \ar[ur]^-{\banginitialmap_A} \ar[dr]|-\iso & & \Fock A\tensor\Fock A
\\
& \I\tensor\I \ar[ru]_-{\banginitialmap_A\tensor\banginitialmap_A}&
}
\qquad\qquad
\xymatrix@C=10pt@R=15pt{
& \Fock A \ar[dr]^-{\bangterminalmap_A} &
\\
\Fock A\tensor\Fock A \ar[ur]^-{\bangcoDiag_A}
\ar[dr]_-{\bangterminalmap_A\tensor\bangterminalmap_A} & & \I
\\
& \I\tensor\I \ar[ru]|-\iso &
}
$$

\hide{
\begin{proposition}
For a Fock-space construction~$\Fock$,
\begin{enumerate}
\item 
  $\q1 = (\xymatrix@C45pt{\Fock A\iso\Fock A\tensor\I \ar[r]^-{\id_{\Fock
   A}\tensor\banginitialmap_A}&\Fock A\tensor\Fock A})$
  and 
  $\q2 = (\xymatrix@C45pt{\Fock A\iso\I\tensor\Fock A
  \ar[r]^-{\banginitialmap_A\tensor\id_{\Fock A}}&\Fock A\tensor\Fock A})$

\item
  $\p1 = (\xymatrix@C45pt{\Fock A\tensor\Fock A\ar[r]^-{\id_{\Fock
   A}\tensor\bangterminalmap_A}&\Fock A\tensor\I\iso\Fock A})$
  and 
  $\p2 = (\xymatrix@C45pt{\Fock A\tensor\Fock A
   \ar[r]^-{\bangterminalmap_A\tensor\id_{\Fock A}}&\I\tensor\Fock
   A\iso\Fock A})$

\item
$(\xymatrix{\Fock A \ar[r]^-{\q i} & \Fock A\biprod\Fock A \ar[r]^-{\p j} &
\Fock A})
=\left\{\begin{array}{ll}
  \id_{\Fock A} & \mbox{, if $i=j$}
  \\
  \banginitialmap_A \icomp \bangterminalmap_A & \mbox{, if $i\not= j$}
\end{array}\right.$

\item
  $(\id_{\Fock A}\tensor\symm_{\Fock A,\Fock
   A})\icomp(\q1\tensor\id_{\Fock A})=\id_{\Fock A}\tensor\q1$
  and 
  $(\id_{\Fock A}\tensor\symm_{\Fock A,\Fock
   A})\icomp(\q2\tensor\id_{\Fock A})=(\q2\tensor\id_{\Fock
   A})\icomp\symm_{\Fock A,\Fock A}$.

\item
  $\bangcoDiag_A\icomp\q i=\id_{\Fock A}$ and 
  $\p i\icomp\bangDiag_A=\id_{\Fock A}$.

\end{enumerate}
\end{proposition}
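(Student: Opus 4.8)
All five assertions are formal and follow from three ingredients: the explicit descriptions of the biproduct inclusions $\inj i$ and projections $\proj j$ recorded just after Proposition~\ref{AlgebraicBiproductStructure}; functoriality and strong monoidality of $\Fock$; and the commutative bialgebra structure on $\Fock A$ supplied by the preceding lemma. Throughout I read $\q i = \varphi^{-1}_{A,A}\icomp\Fock(\inj i)$ and $\p j = \Fock(\proj j)\icomp\varphi_{A,A}$, the transports of the biproduct inclusions and projections of $\Spaces$ along the coherence isomorphism $\varphi_{A,A}\colon\Fock A\tensor\Fock A\iso\Fock(A\biprod A)$ of $\Fock$; correspondingly $\bangcoDiag_A = \Fock(\coDiag_A)\icomp\varphi_{A,A}$, $\bangDiag_A = \varphi^{-1}_{A,A}\icomp\Fock(\Diag_A)$, $\banginitialmap_A = \Fock(\initialmap_A)\icomp\phi$, and $\bangterminalmap_A = \phi^{-1}\icomp\Fock(\terminalmap_A)$, with $\phi\colon\I\iso\Fock\O$ the unit coherence isomorphism. (Should $\q i$ and $\p j$ instead be \emph{defined} by the formulas of items~1--2, those two items become vacuous and the argument below for items~3--5 is unaffected.)

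For items~1 and~2 I would expand $\inj1$ as $\Fock$ applied to the composite $A\iso A\biprod\O\xrightarrow{\id_A\biprod\initialmap_A}A\biprod A$ and transport it along $\varphi$. Strong monoidality turns $\Fock(\id_A\biprod\initialmap_A)$ into $\id_{\Fock A}\tensor\Fock(\initialmap_A)$ modulo $\varphi$, and the coherence square relating $\varphi$, $\phi$, and the right unitors then identifies the composite with $(\id_{\Fock A}\tensor\banginitialmap_A)\icomp\rho^{-1}_{\Fock A}$, which is the claimed formula for $\q1$; the cases of $\q2$, $\p1$, and $\p2$ are the same after exchanging $\rho$ for $\lambda$ and/or dualising. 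The only care needed is keeping track of which coherence isomorphism is in play.

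Items~3 and~5 are then immediate from functoriality. For item~3, $\p j\icomp\q i = \Fock(\proj j)\icomp\varphi_{A,A}\icomp\varphi^{-1}_{A,A}\icomp\Fock(\inj i) = \Fock(\proj j\icomp\inj i)$, and by Proposition~\ref{KroneckerProperty} the inner composite is $\id_A$ when $i=j$ and $\initialmap_A\icomp\terminalmap_A$ when $i\neq j$; applying $\Fock$ and re-expanding the definitions yields $\id_{\Fock A}$, respectively $\banginitialmap_A\icomp\bangterminalmap_A = \Fock(\initialmap_A)\icomp\phi\icomp\phi^{-1}\icomp\Fock(\terminalmap_A)$. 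This is the one place where the statement genuinely departs from its biproduct prototype Proposition~\ref{KroneckerProperty}: since $\Fock$ need not preserve zero morphisms, the off-diagonal entry is the ``vacuum'' idempotent $\banginitialmap_A\icomp\bangterminalmap_A$ rather than $\z_{\Fock A,\Fock A}$. For item~5, $\bangcoDiag_A\icomp\q i = \Fock(\coDiag_A\icomp\inj i) = \Fock(\id_A) = \id_{\Fock A}$, using the codiagonal identity $\coDiag_A\icomp\inj i = \id_A$ (which, once $\Fock$ is applied, is precisely the unit law of the monoid on $\Fock A$); dually $\p i\icomp\bangDiag_A = \Fock(\proj i\icomp\Diag_A) = \id_{\Fock A}$.

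Item~4 I would reduce to the coherence theorem for symmetric monoidal categories. Substituting $\q1 = (\id_{\Fock A}\tensor\banginitialmap_A)\icomp\rho^{-1}_{\Fock A}$ from item~1 and inserting the evident associativity isomorphism, the left-hand side $(\id_{\Fock A}\tensor\symm_{\Fock A,\Fock A})\icomp(\q1\tensor\id_{\Fock A})$ becomes a composite of unitors, an associator, the symmetry, and the morphism $\banginitialmap_A\colon\I\to\Fock A$; naturality of the associator and of $\symm$ with respect to $\banginitialmap_A$ slides that morphism past the symmetry, after which the identity $\lambda_{\Fock A} = \rho_{\Fock A}\icomp\symm_{\I,\Fock A}$ together with the triangle law collapse the remaining canonical maps to $\id_{\Fock A}\tensor\q1$; the $\q2$ identity is handled identically. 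I expect the only real obstacle to be the bookkeeping in items~1--2 and~4 — writing out the strong-monoidal coherence squares and tracking the canonical isomorphisms carefully enough that ``coherence'' is legitimately invoked — since beyond functoriality, strong monoidality, and the bialgebra (equivalently, unit and counit) laws already established there is no genuine content.
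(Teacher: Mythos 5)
Your proof is correct, and since the paper states this proposition without proof there is nothing to diverge from: reading $\q i=\varphi_{A,A}^{-1}\icomp\Fock(\inj i)$ and $\p j=\Fock(\proj j)\icomp\varphi_{A,A}$ and reducing everything to functoriality, Proposition~\ref{KroneckerProperty}, the unit laws of the bialgebra on $\Fock A$, and symmetric monoidal coherence is exactly the argument the surrounding text presupposes (it is the same style of computation as in Proposition~\ref{varphiAAinverse} and Lemma~\ref{TechnicalLemma}). Your observation that the off-diagonal composite in item~3 is the vacuum idempotent $\banginitialmap_A\icomp\bangterminalmap_A$ rather than the zero map, because $\Fock$ sends $0_{A,A}$ to $\banginitialmap_A\icomp\bangterminalmap_A$, is precisely the point of the statement and is handled correctly.
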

}

\begin{proposition}\label{varphiAAinverse}
For a Fock-space construction~$(\Fock,\phi,\varphi)$, the isomorphism
$\varphi_{A,B}$ has inverse 
$(\Fock\proj1\tensor\Fock\proj2)\icomp\bangDiag_{A\biprod B}$.
\end{proposition}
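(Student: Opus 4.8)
The plan is to use that $\varphi_{A,B}$ is already an isomorphism, so it suffices to prove that the candidate inverse, precomposed (or postcomposed) with $\varphi_{A,B}$, is the identity. I would check the left-inverse equation
\[
(\Fock\proj1\tensor\Fock\proj2)\icomp\bangDiag_{A\biprod B}\icomp\varphi_{A,B}=\id_{\Fock A\tensor\Fock B}\,.
\]
Unfolding the definition of $\bangDiag$ one has $\bangDiag_{A\biprod B}=\varphi_{A\biprod B,A\biprod B}^{-1}\icomp\Fock\Diag_{A\biprod B}$, so the composite to be evaluated is
$(\Fock\proj1\tensor\Fock\proj2)\icomp\varphi_{A\biprod B,A\biprod B}^{-1}\icomp\Fock\Diag_{A\biprod B}\icomp\varphi_{A,B}$.

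The first step is to slide $\Fock\proj1\tensor\Fock\proj2$ past $\varphi^{-1}$ using the naturality of $\varphi$ (equivalently of $\varphi^{-1}$) in both of its arguments, applied to the morphisms $\proj1\colon A\biprod B\to A$ and $\proj2\colon A\biprod B\to B$. This rewrites $(\Fock\proj1\tensor\Fock\proj2)\icomp\varphi_{A\biprod B,A\biprod B}^{-1}$ as $\varphi_{A,B}^{-1}\icomp\Fock(\proj1\biprod\proj2)$, and by functoriality of $\Fock$ the whole composite becomes
$\varphi_{A,B}^{-1}\icomp\Fock\big((\proj1\biprod\proj2)\icomp\Diag_{A\biprod B}\big)\icomp\varphi_{A,B}$.

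The heart of the argument is then the purely biproduct-theoretic identity $(\proj1\biprod\proj2)\icomp\Diag_{A\biprod B}=\id_{A\biprod B}$. I would derive this from the universal property of the product: reading $\biprod$ on morphisms via the product presentation, $\proj1\biprod\proj2=\pair{\proj1\icomp r_1,\proj2\icomp r_2}$ where $r_1,r_2$ are the two projections of $(A\biprod B)\biprod(A\biprod B)$; since $\Diag_{A\biprod B}=\pair{\id,\id}$ gives $r_i\icomp\Diag_{A\biprod B}=\id_{A\biprod B}$, one gets $(\proj1\biprod\proj2)\icomp\Diag_{A\biprod B}=\pair{\proj1,\proj2}=\id_{A\biprod B}$, the last equality because $\proj1,\proj2$ are themselves the canonical product projections of $A\biprod B$. (Alternatively, using $\Diag_{A\biprod B}=\inj1+\inj2$ from the convolution enrichment together with Proposition~\ref{KroneckerProperty} and bilinearity of composition, the same composite equals $\inj1\icomp\proj1+\inj2\icomp\proj2=\id_{A\biprod B}$.) Substituting, the composite collapses to $\varphi_{A,B}^{-1}\icomp\Fock\id_{A\biprod B}\icomp\varphi_{A,B}=\id_{\Fock A\tensor\Fock B}$, which is what we wanted.

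I do not expect a genuine obstacle here; the only points requiring care are bookkeeping ones. In the middle step one must thread the two copies of $A\biprod B$ correctly through $\varphi^{-1}_{A\biprod B,A\biprod B}$ so that the naturality square applies with $\proj1$ on one tensor factor and $\proj2$ on the other; and in the last step one must keep in mind that $\biprod$ on morphisms is being used in its product (rather than coproduct) guise — though for biproducts the two agree, which is exactly why the short computation above goes through. Everything else is routine once these identifications are in place.
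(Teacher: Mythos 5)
Your proposal is correct and is essentially the paper's own argument: both rest on the definition of $\bangDiag_{A\biprod B}$ as $\varphi_{A\biprod B,A\biprod B}^{-1}\icomp\Fock\Diag_{A\biprod B}$, naturality of $\varphi$ applied to $\proj1$ and $\proj2$, and the biproduct identity $(\proj1\biprod\proj2)\icomp\Diag_{A\biprod B}=\id_{A\biprod B}$. The only cosmetic difference is that you verify the composite in the order $(\Fock\proj1\tensor\Fock\proj2)\icomp\bangDiag_{A\biprod B}\icomp\varphi_{A,B}=\id$ while the paper's diagram exhibits $\varphi_{A,B}\icomp(\Fock\proj1\tensor\Fock\proj2)\icomp\bangDiag_{A\biprod B}=\id$; since $\varphi_{A,B}$ is already an isomorphism, either one-sided check suffices.
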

\begin{proof}
Follows from the commutativity of 
$$\xymatrix@C40pt{
& 
\Fock(A\biprod B)\tensor\Fock(A\biprod B)
\ar[r]^-{\Fock\proj1\tensor\Fock\proj2}
\ar[d]|-\iso^-{\varphi_{A\biprod B,A\biprod B}}
& 
\Fock A\tensor\Fock B
\ar[d]|-\iso^-{\varphi_{A,B}} 
&
\\
\ar@<-1.5ex>@/_2em/[rr]_-{\id_{\Fock(A\biprod B)}}
\ar[ru]^-{\bangDiag_{A\biprod B}}
\Fock(A\biprod B)
\ar[r]_-{\Fock\Diag_{A\biprod B}}
& 
\Fock\big((A\biprod B)\biprod(A\biprod B)\big)
\ar[r]_-{\Fock(\proj1\biprod\proj2)}
&
\Fock(A\biprod B)
}$$\\[-11mm]
\end{proof}

\begin{proposition}
For a Fock-space construction $\Fock$, we have that 
$\Fock(0_{A,B})= \banginitialmap_B\icomp\bangterminalmap_A$ and that
$\Fock(f+g) 
 = \bangcoDiag_B\icomp(\Fock f\tensor\Fock g)\icomp \bangDiag_A
 : \Fock A\rightarrow\Fock B$ for all $f,g:A\rightarrow B$.
\end{proposition}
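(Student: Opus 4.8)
The plan is to reduce both identities to plain functoriality of $\Fock$: first rewrite $0_{A,B}$ and $f+g$ in terms of the biproduct bialgebra data, using the convolution description of the enrichment of $\Spaces$, and then transport the resulting composites across the structure isomorphisms $\phi:\I\iso\Fock\O$ and $\varphi_{A,B}:\Fock A\tensor\Fock B\iso\Fock(A\biprod B)$ of the strong monoidal functor $(\Fock,\phi,\varphi)$, invoking naturality where needed.

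For the zero map, I would recall that the zero map decomposes as $0_{A,B}=\z_{A,B}=\initialmap_B\icomp\terminalmap_A$, so that functoriality gives $\Fock(0_{A,B})=\Fock\initialmap_B\icomp\Fock\terminalmap_A$. Unfolding the definitions of the Fock-space operations, $\bangterminalmap_A=\phi^{-1}\icomp\Fock\terminalmap_A$ and $\banginitialmap_B=\Fock\initialmap_B\icomp\phi$. Composing and cancelling $\phi\icomp\phi^{-1}=\id_{\Fock\O}$ then yields
$$\banginitialmap_B\icomp\bangterminalmap_A=\Fock\initialmap_B\icomp\phi\icomp\phi^{-1}\icomp\Fock\terminalmap_A=\Fock\initialmap_B\icomp\Fock\terminalmap_A=\Fock(0_{A,B}).$$

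For the sum, I would recall that $f+g=\coDiag_B\icomp(f\biprod g)\icomp\Diag_A$, so $\Fock(f+g)=\Fock\coDiag_B\icomp\Fock(f\biprod g)\icomp\Fock\Diag_A$. Unfolding the definitions, $\bangDiag_A=\varphi_{A,A}^{-1}\icomp\Fock\Diag_A$ and $\bangcoDiag_B=\Fock\coDiag_B\icomp\varphi_{B,B}$. Naturality of $\varphi$ at the pair of morphisms $(f,g):(A,A)\rightarrow(B,B)$ gives $\varphi_{B,B}\icomp(\Fock f\tensor\Fock g)=\Fock(f\biprod g)\icomp\varphi_{A,A}$, hence $\varphi_{B,B}\icomp(\Fock f\tensor\Fock g)\icomp\varphi_{A,A}^{-1}=\Fock(f\biprod g)$, and substituting gives
$$\bangcoDiag_B\icomp(\Fock f\tensor\Fock g)\icomp\bangDiag_A=\Fock\coDiag_B\icomp\Fock(f\biprod g)\icomp\Fock\Diag_A=\Fock(f+g).$$

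I do not expect a genuine obstacle here: the whole argument is the unwinding of the definitions of $\banginitialmap$, $\bangterminalmap$, $\bangcoDiag$, $\bangDiag$ together with functoriality of $\Fock$ and the naturality and invertibility of $\phi$ and $\varphi$. The only thing requiring care is the bookkeeping of which structure isomorphism is hidden in each of the four operations and in which direction it points. Conceptually, the statement records that every Fock-space construction automatically preserves the linear (convolution) structure, the convolution on $\Fock A\rightarrow\Fock B$ being taken with respect to the bialgebra~(\ref{FockSpaceCommutativeBialgebraData}).
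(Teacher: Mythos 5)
Your proof is correct and is precisely the intended argument: the paper states this proposition without giving a proof, and the evident route is exactly the one you take, namely unwinding the convolution formulas $0_{A,B}=\initialmap_B\icomp\terminalmap_A$ and $f+g=\coDiag_B\icomp(f\biprod g)\icomp\Diag_A$, applying functoriality of $\Fock$, and cancelling the structure isomorphisms $\phi$ and $\varphi$ via their invertibility and the naturality of $\varphi$. Your bookkeeping of which isomorphism is hidden in each of $\banginitialmap$, $\bangterminalmap$, $\bangcoDiag$, $\bangDiag$ (and in which direction it points) agrees with the paper's definitions and with Proposition~\ref{varphiAAinverse}, so there is nothing to add.
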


\subsubsection{Creation/annihilation operators}

\begin{definition}\label{CreationAnnihilationDef}
Let $\Fock$ be a Fock-space construction.  For natural transformations
$\eta_A: A\rightarrow\Fock A$ and $\varepsilon_A:\Fock A\rightarrow A$,
define the associated \emph{creation} (or \emph{raising}) natural
transformation~$\creation\eta$ and \emph{annihilation} (or
\emph{lowering}) natural transformation~$\annihilation\varepsilon$ as
$$\begin{array}{lcl}
\creation\eta_A & = & 
(\xymatrix@C25pt{ A\tensor\Fock A \ar[rr]^-{\eta_A\tensor\id_{\Fock A}} && \Fock
A\tensor \Fock A \ar[r]^-{\bangcoDiag_A} & \Fock A
})
\\[3mm]
\annihilation\varepsilon_A & = & 
(\xymatrix@C25pt{ \Fock A \ar[r]^-{\bangDiag_A} & 
\Fock A\tensor \Fock A \ar[rr]^-{\varepsilon_A\tensor\id_{\Fock A}} &&
A\tensor\Fock A })
\end{array}$$
\end{definition}

The above form for creation and annihilation operators is non-standard.  More
commonly, see~\eg~\cite{Geroch}, the literature deals with creation operators
$\creation\eta_A^v: \bang A\rightarrow\bang A$ for vectors $v:
\tensorunit\rightarrow A$ and annihilation operators
$\annihilation\varepsilon_A^{v'}:\bang A\rightarrow \bang A$ for covectors
$v':A \rightarrow\tensorunit$. 
In the present 
setting, these are derived as follows: 
$$\begin{array}{rcl}
\creation\eta_A^v 
& \ = \ & 
(\xymatrix@C20pt{\bang A\iso \tensorunit\tensor \bang A
\ar[rr]^-{v\tensor\id_{\Fock A}} && A\tensor\bang A \ar[r]^-{\creation\eta_A} &
\bang A})
\\[1mm]
\annihilation\varepsilon_{A}^{v'}
& = & 
(\xymatrix{\bang A \ar[r]^-{\annihilation\varepsilon_A} & A\tensor\bang
A\ar[rr]^-{v'\tensor\id_{\Fock A}} && \tensorunit\tensor\bang A
\iso \bang A})
\end{array}$$

\begin{theorem}\label{Commutation_Relations}
Let $\Fock$ be a Fock-space construction on a category of spaces and
linear maps.  For natural transformations $\eta_A:A\rightarrow\Fock A$ and
$\varepsilon_A:\Fock A\rightarrow A$, their associated creation and
annihilation natural transformations $\creation\eta_A:A\tensor\Fock
A\rightarrow \Fock A$ and $\annihilation\varepsilon_A:\Fock
A\rightarrow A\tensor\Fock A$ satisfy the commutation relations:
\begin{enumerate}
\item\label{Commutation_Relations_ONE}
$
\annihilation\varepsilon_A\icomp\creation\eta_A
\, = \,
(\varepsilon_A\icomp\eta_A\tensor\id_{\Fock A})
\plus 
(\id_A\tensor\creation\eta_A)(\symm_{A,A}\tensor\id_{\Fock A})(\id_A\tensor\annihilation\varepsilon_A)
\ : A\tensor\bang A\rightarrow A\tensor\bang A
$

\item\label{Commutation_Relations_TWO}
$\creation\eta_A\icomp(\id_A\tensor\creation\eta_A)
\, = \,
\creation\eta_A\icomp(\id_A\tensor\creation\eta_A)\icomp(\symm_{A,A}\tensor\id_{\Fock A})
\ : A\tensor A\tensor\bang A \rightarrow \bang A
$

\item\label{Commutation_Relations_THREE}
$
(\id_A\tensor\annihilation\varepsilon_A)\icomp\annihilation\varepsilon_A
\, = \,
(\symm_{A,A}\tensor\id_{\Fock
A})(\id_A\tensor\annihilation\varepsilon_A)\icomp\annihilation\varepsilon_A \ 
: \bang A \rightarrow A\tensor A\tensor\bang A
$
\end{enumerate}
\end{theorem}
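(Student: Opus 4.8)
The plan is to derive all three commutation relations purely from the bialgebra structure on Fock space, which is available by the preceding lemma. The key structural fact is that $(\banginitialmap_A,\bangcoDiag_A;\bangterminalmap_A,\bangDiag_A)$ is a commutative bialgebra, and the enrichment formulas $\Fock(0_{A,B})=\banginitialmap_B\icomp\bangterminalmap_A$ and $\Fock(f+g)=\bangcoDiag_B\icomp(\Fock f\tensor\Fock g)\icomp\bangDiag_A$ from the last proposition let us translate the linear-algebraic content of the relations into purely monoidal-categorical diagram chases. So the first thing I would do is unfold $\creation\eta_A$ and $\annihilation\varepsilon_A$ from Definition~\ref{CreationAnnihilationDef} and write $\annihilation\varepsilon_A\icomp\creation\eta_A$ as a single composite built from $\eta_A\tensor\id$, $\bangcoDiag_A$, $\bangDiag_A$, and $\varepsilon_A\tensor\id$.

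For part~(\ref{Commutation_Relations_ONE}), the crux is the bialgebra law that relates $\bangDiag_A$ composed with $\bangcoDiag_A$ to the ``middle-four interchange'' expression $(\bangcoDiag_A\tensor\bangcoDiag_A)\icomp(\id\tensor\symm\tensor\id)\icomp(\bangDiag_A\tensor\bangDiag_A)$ — this is precisely the compatibility square labelled~(\ref{FockSpaceCommutativeBialgebraData})/the second diagram in~(\ref{BiproductBialgebraOne}) transported by $\Fock$. Substituting this into $\annihilation\varepsilon_A\circ\creation\eta_A$, one branch of the split becomes $(\varepsilon_A\tensor\id)\icomp(\eta_A\tensor\id)=(\varepsilon_A\icomp\eta_A)\tensor\id$ after using the counit/unit laws $\bangterminalmap_A\icomp\banginitialmap_A=\id_\I$ and the comonoid counit laws, while the other branch reassembles into $(\id_A\tensor\creation\eta_A)\icomp(\symm_{A,A}\tensor\id)\icomp(\id_A\tensor\annihilation\varepsilon_A)$ after one applies naturality of $\symm$ to commute $\eta_A$ and $\varepsilon_A$ past the symmetry and re-groups the $\bangcoDiag$/$\bangDiag$ pair. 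The $\plus$ then appears because the two branches are joined via $\bangcoDiag$ and separated via $\bangDiag$ at type $A\tensor\bang A$, which by the enrichment formula is exactly addition of the two composites. Parts~(\ref{Commutation_Relations_TWO}) and~(\ref{Commutation_Relations_THREE}) are easier: (\ref{Commutation_Relations_TWO}) follows from associativity and commutativity of $\bangcoDiag_A$ together with naturality of $\symm$, since $\creation\eta_A\icomp(\id_A\tensor\creation\eta_A)$ amounts to merging two ``$\eta$-created'' factors into $\bang A$ and the order of merging is immaterial; dually, (\ref{Commutation_Relations_THREE}) follows from coassociativity and cocommutativity of $\bangDiag_A$.

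The main obstacle I expect is bookkeeping in part~(\ref{Commutation_Relations_ONE}): correctly tracking how the single instance of $\bangDiag_A\icomp\bangcoDiag_A$ in the composite expands under the bialgebra law into four parallel strands, then showing that exactly the right counit simplifications ($\bangterminalmap_A$ killing the appropriate $\bang A$-strands via $\bangterminalmap_A\icomp\creation\eta_A^{\text{stuff}}$-type identities, or rather via the comonoid unit law $(\id\tensor\bangterminalmap_A)\icomp\bangDiag_A\iso\id$) isolate the two surviving branches without introducing spurious terms. A clean way to manage this is to draw the whole thing as a string diagram — merges and splits with the $\eta$/$\varepsilon$ insertions — and observe that the bialgebra axiom is exactly the rewrite allowing a split-above-merge to be pulled apart; the two resulting diagrams are then visibly the identity-on-$\varepsilon\eta$ term and the ``commuted annihilation-then-creation'' term, and the $\plus$ is the formal sum of the two diagrams sharing the same boundary. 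Once the string-diagram normal form is in hand, the coherence of the strong monoidal functor $\Fock$ (used already to obtain~(\ref{FockSpaceCommutativeBialgebraData})) guarantees all the structural isomorphisms cohere, and the identity follows.
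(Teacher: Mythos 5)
There is a genuine gap in your treatment of part~(\ref{Commutation_Relations_ONE}), and it sits exactly where the additive structure of the commutation relation has to come from. The bialgebra law you invoke, $\bangDiag_A\icomp\bangcoDiag_A=(\bangcoDiag_A\tensor\bangcoDiag_A)\icomp(\id\tensor\symm\tensor\id)\icomp(\bangDiag_A\tensor\bangDiag_A)$, is an equality between two \emph{single} composites; it does not by itself produce a sum of two maps, so there are no ``two branches'' at that stage. Your claim that ``the $\plus$ appears because the two branches are joined via $\bangcoDiag$ and separated via $\bangDiag$, which by the enrichment formula is exactly addition'' conflates two different structures: addition of morphisms is convolution with respect to the \emph{biproduct} diagonal and codiagonal $\Diag,\coDiag$ in the ambient category, whereas $\bangDiag_A,\bangcoDiag_A$ are the \emph{tensor}-monoidal comultiplication and multiplication on $\Fock A$. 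The latter pair does not compute sums of maps.

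The actual source of the $\plus$ is the pair of identities of Lemma~\ref{TechnicalLemma}~(\ref{TechnicalLemmaTwo}), namely $\bangDiag_A\icomp\eta_A=(\eta_A\tensor\banginitialmap_A)\plus(\banginitialmap_A\tensor\eta_A)$ and $\varepsilon_A\icomp\bangcoDiag_A=(\varepsilon_A\tensor\bangterminalmap_A)\plus(\bangterminalmap_A\tensor\varepsilon_A)$ (modulo unit isomorphisms). Establishing these is precisely where the hypothesis that $\eta$ and $\varepsilon$ are \emph{natural} is indispensable: one rewrites $\bangDiag_A\icomp\eta_A$ via $\Fock(\Diag_A)$, commutes $\eta$ past $\Diag_A=\inj1\plus\inj2$ by naturality, splits the sum by bilinearity of composition, uses naturality again, and finishes with Proposition~\ref{KroneckerProperty}. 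Your proposal never uses naturality of $\eta$ or $\varepsilon$, yet the theorem fails for arbitrary non-natural families of maps. Once these identities are substituted into the middle-four-interchange form of $\annihilation\varepsilon_A\icomp\creation\eta_A$, bilinearity yields \emph{four} terms, not two; two of them vanish because $\bangterminalmap_A\icomp\eta_A=0_{A,\I}$ and $\varepsilon_A\icomp\banginitialmap_A=0_{\I,A}$ (again consequences of naturality, Lemma~\ref{TechnicalLemma}~(\ref{TechnicalLemmaZero})), and the two survivors are the terms in the statement. Your parts~(\ref{Commutation_Relations_TWO}) and~(\ref{Commutation_Relations_THREE}) are fine and agree with the paper's argument: commutativity and associativity of $\bangcoDiag_A$, its dual, and naturality of the symmetry.
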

It follows as a corollary that\\[-5mm]
\begin{eqnarray}
\nonumber
\annihilation\varepsilon_A^{v'}\icomp\creation\eta_A^v 
& 
\ = \ &
\big(\xymatrix@C80pt{\bang A\iso\tensorunit\tensor\bang A
\ar[r]^-{(v' \varepsilon_A \eta_A v)\tensor\id_{\Fock A}} & \tensorunit\tensor\bang
A\iso\bang A}\big) 
\\[-2mm]
& & 
\label{VectorCovectorCreationAnnihilation}
\enspace + \
\big(\xymatrix@C35pt{\bang A\ar[r]^-{\creation\eta_A^v\icomp\annihilation\varepsilon_A^{v'}}&\bang
A}\big)
\\[1mm] \nonumber
\hspace{-20mm}
\creation\eta_A^u\icomp\creation\eta_A^v 
& 
\ = \ & 
\creation\eta_A^v\icomp\creation\eta_A^u
\\[1mm] \nonumber
\hspace{-20mm}
\annihilation\varepsilon_A^{u'}\icomp\annihilation\varepsilon_A^{v'}
& 
\ = \ & 
\annihilation\varepsilon_A^{v'}\icomp\annihilation\varepsilon_A^{u'}
\end{eqnarray}
for all $u,v:\tensorunit\rightarrow A$ and
$u',v':A\rightarrow\tensorunit$.

The proof of the theorem depends on the following lemma.

\begin{lemma}\label{TechnicalLemma}
For a Fock-space construction $\Fock$, the following hold for all natural
transformations $\eta_A:A\rightarrow\Fock A$ and $\varepsilon_A:\Fock
A\rightarrow A$.
\begin{enumerate}
\item\label{TechnicalLemmaOne}
  $\eta_{A\biprod A}\icomp \Diag_A = (\Fock\inj 1+\Fock\inj 2)\icomp\eta_A
   : A\rightarrow\Fock(A\biprod A)$
  and 
  $\coDiag_A \icomp \varepsilon_{A\biprod A}
   = \varepsilon_A \icomp (\Fock\proj 1+\Fock\proj 2)
   : \Fock(A\biprod A)\rightarrow A$.

\item\label{TechnicalLemmaTwo}
  $\bangDiag_A\icomp\eta_A 
   = 
   (\xymatrix@C30pt{A\iso
   A\tensor\I\ar[r]^-{\eta_A\tensor\banginitialmap_A}&\Fock A\tensor\Fock
   A}) 
   + 
   (\xymatrix@C30pt{A\iso \I\tensor
   A\ar[r]^-{\banginitialmap_A\tensor\eta_A} &\Fock A\tensor\Fock A})$
  and 
  $\varepsilon_A\icomp\bangcoDiag_A 
   = 
   (\xymatrix@C30pt{\Fock A\tensor\Fock A 
   \ar[r]^-{\varepsilon_A\tensor\bangterminalmap_A}& A\tensor\I \iso A}) 
   + 
   (\xymatrix@C30pt{\Fock A\tensor\Fock A
   \ar[r]^-{\bangterminalmap_A\tensor\varepsilon_A} & \I\tensor A \iso
   A})$.

\item\label{TechnicalLemmaZero}
  $\bangterminalmap_A\icomp\eta_A=0_{A,\I}:A\rightarrow\I$ and
  $\varepsilon_A\icomp\banginitialmap_A=0_{\I,A}:\I\rightarrow A$.
\end{enumerate}
\end{lemma}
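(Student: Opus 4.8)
The plan is to derive each item from naturality of $\eta$ and $\varepsilon$, from the $\Mon$/$\CMon$-enrichment of $\Spaces$ (bilinearity and strictness of composition, Proposition~\ref{Mon-Enrichment}), and from the coherence of the strong monoidal functor $\Fock$. Every computation pivots on the earlier identities $\Diag_A=\inj1\plus\inj2$ and $\coDiag_A=\proj1\plus\proj2$. For item~(\ref{TechnicalLemmaOne}), bilinearity of composition followed by naturality of $\eta$ gives
\[
(\Fock\inj1\plus\Fock\inj2)\icomp\eta_A
=\Fock\inj1\icomp\eta_A\plus\Fock\inj2\icomp\eta_A
=\eta_{A\biprod A}\icomp\inj1\plus\eta_{A\biprod A}\icomp\inj2
=\eta_{A\biprod A}\icomp(\inj1\plus\inj2)
=\eta_{A\biprod A}\icomp\Diag_A\, ,
\]
and the companion identity $\coDiag_A\icomp\varepsilon_{A\biprod A}=\varepsilon_A\icomp(\Fock\proj1\plus\Fock\proj2)$ is the exact dual, using naturality of $\varepsilon$ in place of naturality of $\eta$.

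For item~(\ref{TechnicalLemmaTwo}), write $\bangDiag_A=\varphi_{A,A}^{-1}\icomp\Fock\Diag_A$, so that naturality of $\eta$ together with item~(\ref{TechnicalLemmaOne}) gives $\bangDiag_A\icomp\eta_A=\varphi_{A,A}^{-1}\icomp(\Fock\inj1\plus\Fock\inj2)\icomp\eta_A$. The heart of the argument is the sublemma that $\varphi_{A,A}^{-1}\icomp\Fock\inj1$ is the composite of the isomorphism $\Fock A\iso\Fock A\tensor\I$ with $\id_{\Fock A}\tensor\banginitialmap_A$ and, dually, $\varphi_{A,A}^{-1}\icomp\Fock\inj2$ is the composite of $\Fock A\iso\I\tensor\Fock A$ with $\banginitialmap_A\tensor\id_{\Fock A}$. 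One proves this by factoring $\inj1$ through the coproduct diagram as $(\id_A\biprod\initialmap_A)$ after the isomorphism $A\iso A\biprod\O$, applying $\Fock$, and combining naturality of $\varphi$ with the unit coherence of $\Fock$ (which expresses $\Fock$ applied to the biproduct unitor via the tensor unitor, $\varphi$, and $\phi$), using also $\banginitialmap_A=\Fock\initialmap_A\icomp\phi$. Granting the sublemma, distributivity of composition presents $\bangDiag_A\icomp\eta_A$ as the sum of the two composites precomposed with $\eta_A$; naturality of the unit isomorphisms of $\tensor$ then rewrites the summands as $(\eta_A\tensor\banginitialmap_A)$ precomposed with $A\iso A\tensor\I$ and $(\banginitialmap_A\tensor\eta_A)$ precomposed with $A\iso\I\tensor A$, which is the stated formula. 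The companion identity for $\varepsilon_A\icomp\bangcoDiag_A$ follows by the dual computation, with $\proj i,\terminalmap,\bangterminalmap,\coDiag$ in the roles of $\inj i,\initialmap,\banginitialmap,\Diag$.

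For item~(\ref{TechnicalLemmaZero}), $\bangterminalmap_A$ is $\Fock\terminalmap_A$ followed by the isomorphism $\Fock\O\iso\I$, so naturality of $\eta$ yields $\bangterminalmap_A\icomp\eta_A=(\Fock\O\iso\I)\icomp\eta_\O\icomp\terminalmap_A$; since $\O$ is initial, $\eta_\O\colon\O\to\Fock\O$ is the zero map, and strictness of composition forces $\bangterminalmap_A\icomp\eta_A=0_{A,\I}$. Dually, since $\O$ is terminal, $\varepsilon_\O\colon\Fock\O\to\O$ is the zero map, and naturality of $\varepsilon$ with strictness gives $\varepsilon_A\icomp\banginitialmap_A=0_{\I,A}$. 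The one step that is not pure bookkeeping is the coherence chase in the sublemma of item~(\ref{TechnicalLemmaTwo}): one must route $\phi$, $\varphi$, and the unitors precisely enough that $\varphi_{A,A}^{-1}\icomp\Fock\inj i$ emerges as \emph{exactly} the intended vacuum-insertion map; the remainder is distributivity, naturality, and the zero object.
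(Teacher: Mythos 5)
Your proof is correct, and items~(\ref{TechnicalLemmaOne}) and~(\ref{TechnicalLemmaZero}) coincide with the paper's argument (the same $\Diag_A=\inj1\plus\inj2$/bilinearity/naturality chain, and the same factorisation through the zero object). For item~(\ref{TechnicalLemmaTwo}) you begin exactly as the paper does---$\bangDiag_A\icomp\eta_A=\varphi_{A,A}^{-1}\icomp\Fock(\Diag_A)\icomp\eta_A$, naturality of $\eta$, then item~(\ref{TechnicalLemmaOne})---but you finish differently. The paper never computes $\varphi_{A,A}^{-1}\icomp\Fock\inj i$ explicitly: it pushes $(\Fock\inj1+\Fock\inj2)$ past $\bangDiag_{A\biprod A}$ by naturality of $\bangDiag$, collapses $\Fock\proj j\icomp\Fock\inj i$ via the Kronecker identities of Proposition~\ref{KroneckerProperty} to $\id$ or $\banginitialmap_A\icomp\bangterminalmap_A$, and then invokes the counit laws of the comonoid $(\bangterminalmap,\bangDiag)$. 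Your route instead isolates the sublemma that $\varphi_{A,A}^{-1}\icomp\Fock\inj1$ is the vacuum insertion $\Fock A\iso\Fock A\tensor\I\rightarrow\Fock A\tensor\Fock A$ (and dually for $\inj2$), proved by factoring $\inj i$ through the unitor and $\initialmap$ and chasing the unit coherence of the strong monoidal structure. Both are sound; yours makes the role of $\Fock$'s monoidal coherence explicit and identifies the summands directly, at the cost of an extra coherence chase, while the paper's version trades that chase for Proposition~\ref{varphiAAinverse} together with the already-established comonoid laws on Fock space. Your sublemma is in fact a reusable fact (it is what makes $\banginitialmap$ the image of the coproduct injections under $\Fock$), so there is no gap---just a different bookkeeping of the same coherence data.
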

\begin{proof}
For the first and third items, I only detail the proof of one of the
identities; the other identity being established dually.

One calculates as follows:

$(\ref{TechnicalLemmaOne})$
$\eta_{A\biprod A}\icomp \Diag_A 
  = \eta_{A\biprod A}\icomp (\inj1+\inj2)
  = \eta_{A\biprod A}\icomp\inj1 + \eta_{A\biprod A}\icomp\inj2
  = \Fock(\inj1)\icomp\eta_A + \Fock(\inj2)\icomp\eta_A  
  = (\Fock\inj1 + \Fock\inj2)\icomp\eta_A$.  

$(\ref{TechnicalLemmaTwo})$
$\begin{array}[t]{rcl}
\bangDiag_A\icomp\eta_A
& = & 
(\Fock\proj1\tensor\Fock\proj2) \icomp \bangDiag_{A\biprod A} \icomp
\Fock(\Diag_A)\icomp \eta_A 
\\
&& \qquad\mbox{, by definition of $\bangDiag$ and
Proposition~\ref{varphiAAinverse}}
\\[1mm]
& = & 
(\Fock\proj1\tensor\Fock\proj2) \icomp \bangDiag_{A\biprod A} \icomp
(\Fock\inj1+\Fock\inj2)\icomp\eta_A
\\
&& \qquad\mbox{, by naturality of $\eta$ and item $(\ref{TechnicalLemmaOne})$
of this lemma} 
\\[1mm]
& = & 
(\Fock\proj1\tensor\Fock\proj2) \icomp
\big((\Fock\inj1\tensor\Fock\inj1)+(\Fock\inj2\tensor\Fock\inj2)\big)
\icomp\bangDiag_A\icomp\eta_A
\\
&& \qquad\mbox{, by naturality of $\bangDiag$}
\\[1mm]
& = & 
\big((\id_{\Fock A}\tensor\banginitialmap_A\bangterminalmap_A)+(\banginitialmap_A\bangterminalmap_A\tensor\id_{\Fock A})\big)
\icomp\bangDiag_A\icomp\eta_A
\\
&& \qquad\mbox{, by Proposition~\ref{KroneckerProperty} and the definitions of
$\banginitialmap$ and $\bangterminalmap$} 
\\[1mm]
& = & 
(\xymatrix@C30pt{A\iso A\tensor\I\ar[r]^-{\eta_A\tensor\banginitialmap_A}&\Fock
A\tensor\Fock A}) 
+ 
(\xymatrix@C30pt{A\iso \I\tensor A\ar[r]^-{\banginitialmap_A\tensor\eta_A}
&\Fock A\tensor\Fock A}) 
\\
&& \qquad\mbox{, by the comonoid structure of $(\bangterminalmap,\bangDiag)$}
\end{array}$

\medskip

$\begin{array}[t]{rcl}
\varepsilon_A\icomp\bangcoDiag_A
& = & 
(\proj1+\proj2)\icomp\varepsilon_{A\biprod A}\icomp\varphi_{A,A}
\\
&&\qquad\mbox{, by definition of $\bangDiag$ and naturality of
$\varepsilon$}
\\[1mm]
& = & 
(\varepsilon_A\icomp\Fock(\proj1)\icomp\varphi_{A,A})
+
(\varepsilon_A\icomp\Fock(\proj2)\icomp\varphi_{A,A})
\\
&&\qquad\mbox{, by bilinearity of composition and naturality}
\\[1mm]
& = & 
(\xymatrix@C30pt{\Fock A\tensor\Fock A 
   \ar[r]^-{\varepsilon_A\tensor\bangterminalmap_A}& A\tensor\I \iso A}) 
   + 
   (\xymatrix@C30pt{\Fock A\tensor\Fock A
   \ar[r]^-{\bangterminalmap_A\tensor\varepsilon_A} & \I\tensor A \iso
   A})
\\
&&\qquad\mbox{, by definition of $\bangterminalmap$ and coherence of
$\Fock$}
\end{array}$

$(\ref{TechnicalLemmaZero})$
$\bangterminalmap_A\icomp\eta_A
  = (\xymatrix{A\ar[r]^-{\eta_A}&\Fock A
     \ar[r]^-{\Fock\terminalmap_A}&\Fock\O\iso\I}) 
  = (\xymatrix{A\ar[r]^-{\terminalmap_A}&\O\ar[r]^-{\eta_\O}&
     \Fock\O\iso\I})$.
\end{proof}

\begin{proof}[Proof of
Theorem~\ref{Commutation_Relations}]
$(\ref{Commutation_Relations_ONE})$~By means of
Lemma~\ref{TechnicalLemma}~$(\ref{TechnicalLemmaTwo})$, the commutativity
of the diagram
$$\xymatrix{
\ar[dr]^-{\creation\eta_A}
A\tensor\Fock A \ar[d]_-{\eta_A\tensor\id_{\Fock A}}
\ar@<-3.5ex>@/_5em/[dd]|-{(\bangDiag_A\icomp\eta_A)\tensor\bangDiag_A}
& & A\tensor\Fock A
\\
\Fock A\tensor\Fock A \ar[r]^-{\bangcoDiag_A} 
\ar[d]_-{\bangDiag_A\tensor\bangDiag_A}
& 
\ar[ur]^-{\annihilation\varepsilon_A}
\Fock A \ar[r]^-{\bangDiag_A} & \Fock A\tensor\Fock A
\ar[u]_-{\varepsilon_A\tensor\id_{\Fock A}}
\\
\Fock A\tensor\Fock A\tensor\Fock A\tensor\Fock A \ar[rr]_-{\id_{\Fock
A}\tensor\symm_{\Fock A,\Fock A}\tensor\id_{\Fock A}} &&
\Fock A\tensor\Fock A\tensor\Fock A\tensor\Fock A 
\ar[u]_-{\bangcoDiag_A\tensor\bangcoDiag_A}
\ar@<-3.5ex>@/_5em/[uu]|-{(\varepsilon_A\icomp\bangcoDiag_A)\tensor\bangcoDiag_A} 
\\
}$$
shows that $\annihilation\varepsilon_A\icomp\creation\eta_A$ equals
$$
\begin{array}{ll}
& (\xymatrix@C55pt{
A\tensor\Fock A\iso A\tensor\I\tensor\Fock A
\ar[r]^-{\eta_A\tensor\banginitialmap_A\tensor\bangDiag_A}&
\Fock A\tensor\Fock A\tensor\Fock A\tensor\Fock A}
\\
&
\qquad
\xymatrix@C25pt{
\ar[rrr]^-{\id_{\Fock A}\tensor\symm_{\Fock A,\Fock A}\tensor\id_{\Fock A}}
&&&
\Fock A\tensor\Fock A\tensor\Fock A\tensor\Fock A
\ar[rr]^-{\varepsilon_A\tensor\bangterminalmap_A\tensor\bangcoDiag_A}
&&
A\tensor\I\tensor\Fock A \iso A\tensor\Fock A
})
\\
+ & 
\\
& (\xymatrix@C55pt{
A\tensor\Fock A\iso 
\I \tensor A\tensor \Fock A 
\ar[r]^-{ \banginitialmap_A \tensor \eta_A \tensor \bangDiag_A }&
\Fock A\tensor\Fock A\tensor\Fock A\tensor\Fock A}
\\
&
\qquad
\xymatrix@C25pt{
\ar[rrr]^-{\id_{\Fock A}\tensor\symm_{\Fock A,\Fock A}\tensor\id_{\Fock A}}
&&&
\Fock A\tensor\Fock A\tensor\Fock A\tensor\Fock A
\ar[rr]^-{\varepsilon_A\tensor\bangterminalmap_A\tensor\bangcoDiag_A}
&&
A\tensor\I\tensor\Fock A \iso A\tensor\Fock A
})
\\
+ &
\\
& (\xymatrix@C55pt{
A\tensor\Fock A\iso A\tensor\I\tensor\Fock A
\ar[r]^-{\eta_A\tensor\banginitialmap_A\tensor\bangDiag_A}&
\Fock A\tensor\Fock A\tensor\Fock A\tensor\Fock A}
\\
&
\qquad
\xymatrix@C25pt{
\ar[rrr]^-{\id_{\Fock A}\tensor\symm_{\Fock A,\Fock A}\tensor\id_{\Fock A}}
&&&
\Fock A\tensor\Fock A\tensor\Fock A\tensor\Fock A
\ar[rr]^-{
\bangterminalmap_A
\tensor
\varepsilon_A
\tensor
\bangcoDiag_A
}
&&
\I 
\tensor
A\tensor\Fock A
\iso A\tensor\Fock A
})
\\
+ &
\\
& (\xymatrix@C55pt{
A\tensor\Fock A\iso 
\I \tensor A\tensor \Fock A 
\ar[r]^-{ \banginitialmap_A \tensor \eta_A \tensor \bangDiag_A }&
\Fock A\tensor\Fock A\tensor\Fock A\tensor\Fock A}
\\
&
\qquad
\xymatrix@C25pt{
\ar[rrr]^-{\id_{\Fock A}\tensor\symm_{\Fock A,\Fock A}\tensor\id_{\Fock A}}
&&&
\Fock A\tensor\Fock A\tensor\Fock A\tensor\Fock A
\ar[rr]^-{
\bangterminalmap_A 
\tensor 
\varepsilon_A
\tensor 
\bangcoDiag_A 
}
&&
\I 
\tensor 
A\tensor \Fock A 
\iso A\tensor\Fock A
})
\end{array}
$$
which, in turn, by the bialgebra laws and
Lemma~\ref{TechnicalLemma}~$(\ref{TechnicalLemmaZero})$, equals
$$
\big((\varepsilon_A\icomp\eta_A)\tensor\id_{\Fock A}\big)
+ 
0_{A\tensor\Fock A,A\tensor\Fock A}
+ 
0_{A\tensor\Fock A,A\tensor\Fock A}
+
\big(
(\id_A\tensor\creation\eta_A)
(\symm_{A,A}\tensor\id_{\Fock A})
(\id_A\tensor\annihilation\varepsilon_A)
\big)
$$

$(\ref{Commutation_Relations_TWO})~\&~(\ref{Commutation_Relations_THREE})$ The
arguments crucially rely on the commutativity of the Fock-space bialgebra
structure.  Since the two arguments are dual of each other, I only consider
one of them.  
$$\xymatrix{
  A\tensor A\tensor\Fock A 
  \ar[d]_-{\symm_{A,A}\tensor\id_{\Fock A}}
  \ar[r]^-{\id_A\tensor\eta_A\tensor\id_{\Fock A}} 
& A\tensor \Fock A\tensor\Fock A 
  \ar[d]_-{\symm_{A,\Fock A}\tensor\id_{\Fock A}}
  \ar[rr]^-{\id_{\Fock A}\tensor\bangcoDiag_A} 
  \ar[dr]^-{\eta_A\tensor\id_{\Fock A}\tensor\id_{\Fock A}}
& 
& A\tensor\Fock A 
\ar[dr]^-{\eta_A\tensor\id_{\Fock A}} 
&
\\
  A\tensor A\tensor \Fock A 
  \ar[r]_-{\eta_A\tensor\id_A\tensor\id_{\Fock A}}
  \ar[d]_-{\id_A\tensor\eta_A\id_{\Fock A}}
& \Fock A\tensor A\tensor\Fock A 
  \ar[dr]_-{\id_{\Fock A}\tensor\eta_A\tensor\id_{\Fock A}}
& \Fock A\tensor\Fock A\tensor\Fock A
  \ar[d]|-{\symm_{\Fock A,\Fock A}\tensor\id_{\Fock A}}
  \ar[rr]_-{\id_{\Fock A}\tensor\bangcoDiag_A}
  \ar[dr]^-{\bangcoDiag_A\tensor\id_{\Fock A}} &
& \Fock A\tensor\Fock A 
  \ar[d]^-{\bangcoDiag_A} 
\\
  A\tensor\Fock A\tensor\Fock A 
  \ar[rr]_-{\eta_A\tensor\id_{\Fock A}\tensor\id_{\Fock A}}
  \ar[dr]_-{\id_A\tensor\bangcoDiag_A}
& 
& \Fock A\tensor\Fock A\tensor\Fock A
\ar[dr]_-{\id_{\Fock A}\tensor\bangcoDiag_A}
  \ar[r]_-{\bangcoDiag_A\tensor\id_{\Fock A}} 
& \Fock A\tensor\Fock A
  \ar[r]_-{\bangcoDiag_A} 
& \Fock A 
\\
& A\tensor\Fock A
  \ar[rr]_-{\eta_A\tensor\id_{\Fock A}}
& 
& \Fock A\tensor\Fock A
  \ar[ur]_-{\bangcoDiag_A}
& 
}$$\\[-12.5mm]
\end{proof}

Analogously, one can establish the following laws of interaction between
the creation/annihilation operators and the bialgebra structure.
\begin{proposition}
For a Fock-space construction $\Fock$, the following hold for all natural
transformations $\eta_A:A\rightarrow\Fock A$ and $\varepsilon_A:\Fock
A\rightarrow A$.
\begin{enumerate}
\item\label{TechnicalLemmaThree}
$\bangterminalmap_A\icomp\creation\eta_A = 0_{A\tensor\Fock A,\I}$
and
$\banginitialmap_A\icomp\annihilation\varepsilon_A 
 = 0_{\I,A\tensor\Fock A}$.

\item\label{TechnicalLemmaFour}
  $\bangDiag_A\icomp\creation\eta_A 
  = \big((\creation\eta_A\tensor\id_{\Fock A})+(\id_{\Fock
    A}\tensor\creation\eta_A)\icomp(\sigma_{A,\Fock A}\tensor\id_{\Fock
    A})\big)\icomp(\id_A\tensor\bangDiag_A) 
    : A\tensor\Fock A\rightarrow\Fock A\tensor\Fock A$
  and 
  $\annihilation\varepsilon_A\icomp\bangcoDiag_A 
   = (\id_A\tensor\bangcoDiag_A)
     \icomp
     \big((\annihilation\varepsilon_A\tensor\id_{\Fock A})
      +
      (\symm_{\Fock A,A}\tensor\id_{\Fock A})(\id_{\Fock
      A}\tensor\annihilation\varepsilon_A)\big)
   : \Fock A\tensor\Fock A\rightarrow A\tensor\Fock A$.
\end{enumerate}
\end{proposition}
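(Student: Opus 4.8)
The plan is to follow the pattern of the proof of Theorem~\ref{Commutation_Relations}: unfold the definitions of $\creation\eta_A$ and $\annihilation\varepsilon_A$, push the computation through the commutative bialgebra structure~(\ref{FockSpaceCommutativeBialgebraData}) on Fock space, and appeal to Lemma~\ref{TechnicalLemma} to dispose of the interactions of $\eta_A$ and $\varepsilon_A$ with the (co)units and (co)multiplications of that bialgebra. As in the proof of Lemma~\ref{TechnicalLemma}, in each of the two items only the first identity requires a direct argument; the second is its arrow-reversal, obtained by interchanging the monoid and comonoid halves of the bialgebra and swapping $\eta\leftrightarrow\varepsilon$ and $\creation\eta\leftrightarrow\annihilation\varepsilon$.

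For item~$(\ref{TechnicalLemmaThree})$, unfolding $\creation\eta_A=\bangcoDiag_A\icomp(\eta_A\tensor\id_{\Fock A})$ and using that $\bangterminalmap_A$ is a monoid homomorphism for the Fock-space bialgebra~(\ref{FockSpaceCommutativeBialgebraData}) gives, modulo the structural isomorphism $\I\tensor\I\iso\I$, that $\bangterminalmap_A\icomp\creation\eta_A=(\bangterminalmap_A\icomp\eta_A)\tensor\bangterminalmap_A$. Since $\bangterminalmap_A\icomp\eta_A=0_{A,\I}$ by Lemma~\ref{TechnicalLemma}~$(\ref{TechnicalLemmaZero})$ and tensoring with a zero map yields a zero map, this equals $0_{A\tensor\Fock A,\I}$. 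Dually, $\annihilation\varepsilon_A$ precomposed with the vacuum $\banginitialmap_A$ equals $(\varepsilon_A\icomp\banginitialmap_A)\tensor\banginitialmap_A$ modulo $\I\iso\I\tensor\I$, using that $\banginitialmap_A$ is a comonoid homomorphism, and this vanishes because $\varepsilon_A\icomp\banginitialmap_A=0_{\I,A}$ by Lemma~\ref{TechnicalLemma}~$(\ref{TechnicalLemmaZero})$.

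For item~$(\ref{TechnicalLemmaFour})$, the crucial step is the bialgebra law $\bangDiag_A\icomp\bangcoDiag_A=(\bangcoDiag_A\tensor\bangcoDiag_A)\icomp(\id_{\Fock A}\tensor\symm_{\Fock A,\Fock A}\tensor\id_{\Fock A})\icomp(\bangDiag_A\tensor\bangDiag_A)$, the Fock-space instance of the middle square of~(\ref{BiproductBialgebraOne}). Precomposing it with $\eta_A\tensor\id_{\Fock A}$ rewrites its left leg as $(\bangDiag_A\icomp\eta_A)\tensor\bangDiag_A$; expanding $\bangDiag_A\icomp\eta_A$ by Lemma~\ref{TechnicalLemma}~$(\ref{TechnicalLemmaTwo})$ as the sum of $\eta_A\tensor\banginitialmap_A$ and $\banginitialmap_A\tensor\eta_A$ (modulo the structural isomorphisms $A\iso A\tensor\I$ and $A\iso\I\tensor A$), bilinearity of $\tensor$ and $\icomp$ splits $\bangDiag_A\icomp\creation\eta_A$ into two summands. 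In each summand the vacuum $\banginitialmap_A$ thus produced is fed, after the middle symmetry, into one of the two copies of $\bangcoDiag_A$, where the monoid unit law of~(\ref{FockSpaceCommutativeBialgebraData}) collapses it to an identity; what remains is $(\creation\eta_A\tensor\id_{\Fock A})\icomp(\id_A\tensor\bangDiag_A)$ from the first summand and $(\id_{\Fock A}\tensor\creation\eta_A)\icomp(\symm_{A,\Fock A}\tensor\id_{\Fock A})\icomp(\id_A\tensor\bangDiag_A)$ from the second, whose sum is the asserted right-hand side. The second identity of~$(\ref{TechnicalLemmaFour})$ follows dually, with $\bangDiag\leftrightarrow\bangcoDiag$, $\creation\eta\leftrightarrow\annihilation\varepsilon$, and $\symm_{A,\Fock A}$ replaced by $\symm_{\Fock A,A}$.

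As with the proof of Theorem~\ref{Commutation_Relations}, the tidiest way to organise item~$(\ref{TechnicalLemmaFour})$ is a single commuting diagram that interposes $(\bangDiag_A\icomp\eta_A)\tensor\bangDiag_A$ between $\creation\eta_A$ and $\bangDiag_A$ and reads off its two boundary paths. The one genuine obstacle is the bookkeeping of the four $\Fock A$-factors and the placement of $\symm_{\Fock A,\Fock A}$: one must verify that in the two summands the surviving symmetry is, respectively, absorbed by the collapsed vacuum and turned into exactly $\symm_{A,\Fock A}$, leaving no stray transposition. Beyond this coherence juggling, the argument uses nothing but naturality, the strong-monoidal coherence of $\Fock$, the bialgebra laws of~(\ref{FockSpaceCommutativeBialgebraData}), and Lemma~\ref{TechnicalLemma}.
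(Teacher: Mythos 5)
Your proposal is correct and follows essentially the same route as the paper's argument: item~(1) by the (co)unit bialgebra laws together with Lemma~\ref{TechnicalLemma}~(\ref{TechnicalLemmaZero}) and bilinearity, and item~(2) by interposing $(\bangDiag_A\icomp\eta_A)\tensor\bangDiag_A$ via the multiplication/comultiplication bialgebra law, expanding $\bangDiag_A\icomp\eta_A$ by Lemma~\ref{TechnicalLemma}~(\ref{TechnicalLemmaTwo}), and collapsing the resulting vacua by the unit laws. The bookkeeping of the middle symmetry that you flag as the one delicate point does work out exactly as you describe, so nothing is missing.
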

\hide{
\begin{proof}
$(\ref{TechnicalLemmaThree})$ \ldots

$(\ref{TechnicalLemmaFour})$
The core of the argument is given by the commutativity of the diagram
below
$$
\hspace{-5mm}
\xymatrix@C35pt{
A\tensor\Fock A 
\ar[d]|-{\id_A\tensor\bangDiag_A}
\ar[r]^-{\eta_A\tensor\id_{\Fock A}} & \Fock A\tensor\Fock A
\ar[rr]^-{\bangcoDiag_A} 
\ar[d]|-{\bangDiag_A\tensor\bangDiag_A} &
& \Fock A\ar[ddd]|-{\bangDiag_A} & 
\\
A\tensor\Fock A\tensor\Fock A
\ar[dd]|-{\eta_A\tensor\id_{\Fock A}\tensor\id_{\Fock A}}
&
\Fock A\tensor\Fock A\tensor\Fock A\tensor\Fock A
\ar[dr]|-{\id_{\Fock A}\tensor\symm_{\Fock A,\Fock A}\tensor\id_{\Fock A}} 
& 
&
\\
& 
&
\Fock A\tensor\Fock A\tensor\Fock A\tensor\Fock A
\ar[rd]_-{\bangcoDiag_A\tensor\bangcoDiag_A}
& 
& 
\\
\Fock A\tensor\Fock A\tensor\Fock A
\ar[ruu]|(.6){\enspace(\q1+\q2)\tensor\id_{\Fock A}\tensor\id_{\Fock A}}
\ar[rru]|(.55){\quad\qquad(\id_{\Fock A}\tensor\id_{\Fock
A}\tensor\q2)+((\q2\tensor\id_{\Fock A}\tensor\id_{\Fock
A})(\symm_{\Fock A,\Fock A}\tensor\id_{\Fock A}))}
\ar[rrr]_-{(\bangcoDiag_A\tensor\id_{\Fock A})+((\id_{\Fock
A}\tensor\bangcoDiag_A)(\symm_{\Fock A,\Fock A}\tensor\id_{\Fock A}))} &&&
\Fock A\tensor\Fock A 
}$$
where one sees the crucial role played by the bialgebra structure of Fock
space and item~$(\ref{TechnicalLemmaTwo})$ of Lemma~\ref{TechnicalLemma}.
\end{proof}
}

\subsection{Coherent states}
\label{CoherentStates}

Our discussion of coherent states is within the framework of categorical
models of linear logic, see~\eg~\cite{Mellies}.

\begin{definition}
A \emph{linear Fock-space construction} is one equipped with \emph{linear
exponential comonad structure}~$(\epsilon,\delta)$ in the form of natural
transformations $\epsilon_A:\Fock A\rightarrow A$ and $\delta_A:\Fock
A\rightarrow\Fock\Fock A$ such that 
$$
\xymatrix{
& \ar[d]|-{\delta_A} \ar[dl]_-{\id_{\Fock A}} \Fock A \ar[rd]^-{\id_{\Fock
A}} & 
\\
\Fock A & \ar[l]^-{\epsilon_{FA}} \Fock \Fock A \ar[r]_-{\Fock\epsilon_A}
& \Fock A }
\qquad\qquad
\xymatrix{
\Fock A \ar[d]_-{\delta_A} \ar[r]^-{\delta_A} & \Fock \Fock A
\ar[d]^-{\delta_{\Fock A}}
\\
\Fock \Fock A \ar[r]^-{\Fock \delta_A} & \Fock \Fock A 
} 
$$
and subject to the coherence conditions 
$$
\xymatrix{
& \Fock\Fock\O \ar[dr]^-{F\terminalmap_{\Fock\O}} & 
\\
\ar[ru]^-{\delta_\O} 
\Fock\O \ar[rr]_-{\id_{\Fock\O}} && \Fock\O
}
\qquad
\xymatrix@C45pt{
\Fock A\tensor\Fock B \ar[d]_-{\varphi_{A,B}}
\ar[rr]^-{\delta_A\tensor\delta_B} & & \Fock \Fock A\tensor\Fock \Fock B 
\ar[d]^-{\varphi_{\Fock A,\Fock B}}
\\
\Fock (A\biprod B) \ar[r]_-{\delta_{A\biprod B}} & \Fock \Fock (A\biprod
B) \ar[r]_-{\Fock\pair{\Fock\proj1,\Fock\proj2}} & 
\Fock(\Fock A\biprod\Fock B)
}$$
\end{definition}

\begin{definition}\label{CoherentStateDefinition}
Let $\Fock$ be a linear Fock-space construction.  A \emph{coherent state}
$\gamma$ is a map $\I\rightarrow \Fock A$ such that 
\begin{enumerate}
  \item \label{CoherentStateDefinitionOne}
    $\annihilation\epsilon_A \icomp \gamma 
    = (\xymatrix{\I\iso\I\tensor\I \ar[r]^-{v\tensor\gamma} &
       A\tensor\Fock A})$
    for some $v:\I\rightarrow A$,
  \item \label{CoherentStateDefinitionTwo}
    $\bangterminalmap_A\icomp\gamma=\id_\I$, and 
  \item \label{CoherentStateDefinitionThree}
    $\bangDiag_A\icomp\gamma
    = (\xymatrix{\I\iso\I\tensor\I\ar[r]^-{\gamma\tensor\gamma}&\Fock
    A\tensor\Fock A})$.
\end{enumerate}
\end{definition}

\begin{definition}\label{ExtensionsDef}
Let $\Fock$ be a linear Fock-space construction.
\begin{enumerate}
\item
The \emph{Kleisli extension} $\Kext u:\Fock X\rightarrow\Fock A$ of
$u:\Fock X\rightarrow A$ is defined as $\Fock(u)\comp\delta_X$.

\item\label{GlobalElementExtension}
The \emph{extension} $\ext v:\I\rightarrow\Fock A$ of
$v:\I\rightarrow A$ is the composite
$$\xymatrix{
\I \iso\Fock\O\ar[r]^-{\delta_\O} &\Fock\Fock O \iso\Fock\I
\ar[r]^-{\Fock v} & \Fock A }
$$
\end{enumerate}
\end{definition}
For instance,
$\ext{0_{\I,A}} = \banginitialmap_A:\I\rightarrow\Fock A$.

\begin{theorem}\label{CoherentStateTheorem}
For every $v:\I\rightarrow A$, the extension $\ext v:\I\rightarrow\Fock A$
is a coherent state.
\end{theorem}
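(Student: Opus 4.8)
The plan is to verify the three defining conditions of a coherent state for $\gamma = \ext{v}$ in turn, reducing each to a manipulation of the linear exponential comonad axioms together with the bialgebra coherences of Fock space already established. Throughout I will use that $\ext v = \Fock(v)\comp(\text{the canonical }\I\iso\Fock\I\text{ twisted by }\delta_\O)$, so that naturality of $\delta$, $\bangterminalmap$, $\bangDiag$ in the variable can be brought to bear, and I will repeatedly exploit that the coherence square for $\delta$ against $\varphi$ expresses compatibility of $\delta$ with the comonoid $(\bangterminalmap,\bangDiag)$.

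\smallskip

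First I would handle condition~$(\ref{CoherentStateDefinitionTwo})$, $\bangterminalmap_A\comp\ext v=\id_\I$. Expanding $\ext v$ and using naturality of $\bangterminalmap$ to slide $\Fock(v)$ past, the claim reduces to $\bangterminalmap_\I\comp(\text{twist by }\delta_\O)=\id_\I$ on $\Fock\I$, which is precisely the first coherence triangle in the definition of linear Fock-space construction (the one asserting $\Fock\terminalmap_{\Fock\O}\comp\delta_\O=\id_{\Fock\O}$) transported along $\I\iso\Fock\O$. Next, condition~$(\ref{CoherentStateDefinitionThree})$: I would expand $\bangDiag_A\comp\ext v$, push $\bangDiag$ through $\Fock(v)$ by naturality so that it becomes $(\Fock v\tensor\Fock v)\comp\bangDiag_\I\comp(\text{twist by }\delta_\O)$, and then show $\bangDiag_\I\comp(\text{twist by }\delta_\O)=(\text{twist}\tensor\text{twist})\comp(\delta_\O\tensor\delta_\O)\comp\cdots$ equals the requisite $\gamma\tensor\gamma$ after using the $\delta$-versus-$\varphi$ coherence square (which, unwound through $\varphi_{\O,\O}$ and $\Fock\terminalmap$, is exactly the statement that $\delta$ is a comonoid morphism for $(\bangterminalmap,\bangDiag)$). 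This is the ``splitting'' half and is essentially formal once the coherence square is read correctly.

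\smallskip

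The main obstacle is condition~$(\ref{CoherentStateDefinitionOne})$, the annihilation equation $\annihilation\epsilon_A\comp\ext v=(v\tensor\ext v)\comp({\iso})$. Here $\annihilation\epsilon_A=(\epsilon_A\tensor\id_{\Fock A})\comp\bangDiag_A$ by Definition~\ref{CreationAnnihilationDef}. I would start from the expression obtained in the previous paragraph, $\bangDiag_A\comp\ext v=(\ext v\tensor\ext v)\comp({\iso})$, so that $\annihilation\epsilon_A\comp\ext v=(\epsilon_A\comp\ext v)\tensor\ext v$ up to the unit isomorphism; it then suffices to prove the ``counit'' identity $\epsilon_A\comp\ext v=v:\I\to A$. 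Expanding, $\epsilon_A\comp\Fock(v)\comp(\text{twist by }\delta_\O)=v\comp\epsilon_\I\comp(\text{twist by }\delta_\O)$ by naturality of $\epsilon$, and $\epsilon_\I\comp(\text{twist by }\delta_\O)$ transported along $\I\iso\Fock\O$ is $\epsilon_{\Fock\O}\comp\delta_\O=\id_{\Fock\O}$, which is the left-hand triangle of the comonad counit law. The only delicate point is bookkeeping the coherence isomorphisms $\I\iso\Fock\O$, $\phi:\I\iso\Fock\I$ and the strong-monoidal structure maps so that ``twist by $\delta_\O$'' is handled consistently across the counit triangle and the $\delta$–$\varphi$ square; I expect that once a single careful diagram fixing these isomorphisms is drawn, all three conditions fall out, with $(\ref{CoherentStateDefinitionThree})$ feeding directly into $(\ref{CoherentStateDefinitionOne})$ as above.
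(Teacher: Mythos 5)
Your proof is correct and follows essentially the same route as the paper, which derives the three coherent-state conditions from the comonad counit law, the two linear-exponential coherence conditions (the $\delta_\O$ triangle and the $\delta$--$\varphi$ square), naturality, and the observation that $\bangDiag_\O$ is the canonical isomorphism $\Fock\O\iso\Fock\O\tensor\Fock\O$. The only cosmetic difference is that the paper packages the annihilation condition as an instance of the general Kleisli-extension identity $\annihilation\epsilon_A\icomp\Kext u=(u\tensor\Kext u)\icomp\bangDiag_X$, whereas you specialise it by feeding condition~(3) into the counit identity $\epsilon_A\icomp\ext v=v$ --- the same computation in a slightly different order.
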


The theorem arises from the following facts.

\begin{proposition}
Let $\Fock$ be a linear Fock-space construction.
\begin{enumerate}
  \item 
    For $f:A\rightarrow B$, 
    $\bangterminalmap_B\comp\Fock(f)
     =\bangterminalmap_A:\Fock A\rightarrow\I$.

  \item 
    $\bangterminalmap_{\Fock A}\icomp\delta_A=\bangterminalmap_A:
     \Fock A\rightarrow\I$.

  \item
    $\bangDiag_{\Fock A}\icomp\delta_A
     = (\delta_A\tensor\delta_A)\icomp\bangDiag_A
     : \Fock A\rightarrow\Fock\Fock A\tensor\Fock\Fock A$.

  \item
    For $u:\Fock X\rightarrow A$, 
    $\annihilation\epsilon_A\comp\Kext u 
    = (u\tensor\Kext u)\icomp\bangDiag_X$.

  \item
    $\bangDiag_\O =
    (\Fock\O\iso\I\iso\I\tensor\I\iso\Fock\O\tensor\Fock\O)$.
\end{enumerate}
\end{proposition}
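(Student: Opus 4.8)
The plan is to treat the five items as structural diagram chases, carried out in the listed order so that each can be used in the next; the only ingredients beyond functoriality of $\Fock$ are naturality and the two coherence conditions relating $(\epsilon,\delta)$ to the monoidal and Fock structure. It is worth recording at the outset that $\bangterminalmap$ and $\bangDiag$ are natural transformations, being composites of the natural $\Fock(\terminalmap_{(-)})$, $\Fock(\Diag_{(-)})$ with the structural isomorphisms $\phi,\varphi$, and that $\epsilon,\delta$ are natural by hypothesis. Item~1 is then immediate: $\bangterminalmap_B\comp\Fock(f)$ is $\Fock(\terminalmap_B\comp f)$ followed by the fixed iso $\Fock\O\iso\I$, and $\terminalmap_B\comp f=\terminalmap_A$ because $\O$ is terminal, so in fact $\bangterminalmap$ is a natural transformation from $\Fock$ to the constant-$\I$ functor. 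For Item~2, I would push the comonad's first coherence triangle $\Fock(\terminalmap_{\Fock\O})\comp\delta_\O=\id_{\Fock\O}$ through naturality of $\delta$ at $\terminalmap_A\colon A\to\O$, obtaining $\Fock(\terminalmap_{\Fock\O})\comp\Fock\Fock(\terminalmap_A)\comp\delta_A=\Fock(\terminalmap_A)$; since $\terminalmap_{\Fock\O}\comp\Fock(\terminalmap_A)=\terminalmap_{\Fock A}$ by uniqueness of maps into $\O$, postcomposing with $\Fock\O\iso\I$ gives the claim.

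The crux is Item~3, and it is the one place the second coherence square is used. I would instantiate that square at $B=A$, read off $\varphi_{\Fock A,\Fock A}\comp(\delta_A\tensor\delta_A)=\Fock\pair{\Fock\proj1,\Fock\proj2}\comp\delta_{A\biprod A}\comp\varphi_{A,A}$, and then compute $(\delta_A\tensor\delta_A)\comp\bangDiag_A$ by unfolding $\bangDiag_A=\varphi_{A,A}^{-1}\comp\Fock(\Diag_A)$, inserting $\varphi_{\Fock A,\Fock A}^{-1}\comp\varphi_{\Fock A,\Fock A}$, substituting the square, cancelling $\varphi_{A,A}\comp\varphi_{A,A}^{-1}$, and pulling $\delta$ past $\Fock(\Diag_A)$ by naturality. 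This leaves $\varphi_{\Fock A,\Fock A}^{-1}\comp\Fock\big(\pair{\Fock\proj1,\Fock\proj2}\comp\Fock(\Diag_A)\big)\comp\delta_A$, at which point the biproduct identities $\proj1\comp\Diag_A=\id_A=\proj2\comp\Diag_A$ give $\pair{\Fock\proj1,\Fock\proj2}\comp\Fock(\Diag_A)=\Diag_{\Fock A}$, so the expression is exactly $\bangDiag_{\Fock A}\comp\delta_A$. Item~4 is then routine: unfold $\annihilation\epsilon_A=(\epsilon_A\tensor\id_{\Fock A})\comp\bangDiag_A$ and $\Kext u=\Fock(u)\comp\delta_X$, push $\bangDiag$ past $\Fock(u)$ by naturality, apply Item~3 to commute $\bangDiag_{\Fock X}$ past $\delta_X$, and then collapse the first tensor factor $\epsilon_A\comp\Fock(u)\comp\delta_X$ to $u$ using naturality of $\epsilon$ together with the counit law $\epsilon_{\Fock X}\comp\delta_X=\id_{\Fock X}$; the remaining factor is $\Fock(u)\comp\delta_X=\Kext u$.

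Item~5 is pure coherence: since $\O$ is a zero object, the unique arrow $\O\to\O\biprod\O$ is the canonical isomorphism, so $\Diag_\O$ is the inverse of the left unitor $\O\biprod\O\iso\O$ of the biproduct monoidal structure; applying $\Fock$ and precomposing with $\varphi_{\O,\O}^{-1}$, the strong-monoidal unit coherence of $(\Fock,\phi,\varphi)$ yields $\bangDiag_\O=(\phi\tensor\id_{\Fock\O})\comp(\lambda_{\Fock\O})^{-1}$, and rewriting $\lambda_{\Fock\O}$ by naturality of the unitor turns this into the claimed composite $\Fock\O\iso\I\iso\I\tensor\I\iso\Fock\O\tensor\Fock\O$. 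I expect the main obstacle to be bookkeeping rather than anything conceptual: keeping the two roles of $\O$ (zero object versus monoidal unit for $\biprod$) apart, and correctly orienting the unitor and coherence isomorphisms for $\biprod$ and for $\tensor$ while chasing the strong-monoidal coherence diagrams; the one genuine choice to get right is recognising, in Item~3, that it is the $\varphi$--$\delta$ coherence square and not a counit identity that does the work. With these facts in hand, Theorem~\ref{CoherentStateTheorem} follows by checking the three defining conditions of a coherent state for $\ext v$ against Items~2--5.
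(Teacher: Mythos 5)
Your proposal is correct, and since the paper states this proposition without proof (it is only recorded as the list of facts from which Theorem~\ref{CoherentStateTheorem} follows), your argument supplies exactly the intended justification: items~1, 2 and~5 from terminality/initiality of $\O$ and the unit coherences, item~3 from the $\varphi$--$\delta$ coherence square together with naturality of $\delta$ and $\proj i\icomp\Diag_A=\id_A$, and item~4 from item~3 plus naturality of $\bangDiag$, $\epsilon$ and the counit law. I verified each step, including the identification $\pair{\Fock\proj1,\Fock\proj2}\icomp\Fock(\Diag_A)=\Diag_{\Fock A}$ and the unitor naturality rewriting in item~5, and found no gaps.
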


We conclude the section by recording a property that will be useful at the
end of the paper.

\begin{proposition}\label{CoherentStateCreation}
Let $\eta_A:A\rightarrow\Fock A$ be a natural transformation for a linear
Fock-space construction~$\Fock$. For $v:\I\rightarrow A$,
\begin{equation}\label{CoherentStateCreationEquation}
\hfill
\Kext{(\creation\eta_A^v)}\icomp\banginitialmap_A
= \,
\ext{\eta_A\icomp v}
\hfill\hfill
\end{equation}
\end{proposition}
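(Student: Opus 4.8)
The plan is to unfold both sides of~(\ref{CoherentStateCreationEquation}) using the definitions of the Kleisli extension, the global-element extension, and the vector-indexed creation operator, and then to reduce the identity to a statement about the comonad structure $(\epsilon,\delta)$ interacting with the bialgebra operations $(\banginitialmap,\bangcoDiag)$ on Fock space and with the naturality of~$\eta$. First I would rewrite the left-hand side: $\Kext{(\creation\eta_A^v)}\icomp\banginitialmap_A = \Fock(\creation\eta_A^v)\icomp\delta_A\icomp\banginitialmap_A$, where $\creation\eta_A^v$ itself factors as $\bangcoDiag_A\icomp(\eta_A\tensor\id_{\Fock A})\icomp(v\tensor\id_{\Fock A})\icomp\lambda_{\Fock A}^{-1}$. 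On the right-hand side, $\ext{\eta_A\icomp v}$ is by Definition~\ref{ExtensionsDef}(\ref{GlobalElementExtension}) the composite $\I\iso\Fock\O\xrightarrow{\delta_\O}\Fock\Fock\O\iso\Fock\I\xrightarrow{\Fock(\eta_A\icomp v)}\Fock A$, which by functoriality of $\Fock$ splits as $\Fock(\eta_A)\icomp\Fock(v)\icomp(\text{iso})\icomp\delta_\O\icomp(\text{iso})$; and the tail $\Fock(v)\icomp(\Fock\O\iso\Fock\I)\icomp\delta_\O\icomp(\I\iso\Fock\O)$ is precisely $\ext v$, so the right-hand side is $\Fock(\eta_A)\icomp\ext v$.

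So the goal becomes $\Fock(\creation\eta_A^v)\icomp\delta_A\icomp\banginitialmap_A = \Fock(\eta_A)\icomp\ext v$. The key observation is that $\banginitialmap_A = \ext{0_{\I,A}}$ (as recorded just before Theorem~\ref{CoherentStateTheorem}), and more usefully that $\delta_A\icomp\banginitialmap_A$ should simplify via the coherence conditions relating $\delta$ to $\banginitialmap$: since $\banginitialmap_A = \Fock(\initialmap_A)\icomp(\I\iso\Fock\O)$ and $\delta$ is natural, $\delta_A\icomp\Fock(\initialmap_A) = \Fock\Fock(\initialmap_A)\icomp\delta_\O$. Combined with the first comonad coherence condition $\Fock(\terminalmap_{\Fock\O})\icomp\delta_\O = \id_{\Fock\O}$ — or rather a dual/companion computation — one expects $\delta_A\icomp\banginitialmap_A$ to equal $\Fock(\banginitialmap_A)\icomp\banginitialmap_{\Fock A}$ up to the canonical isomorphisms, i.e. that $\banginitialmap$ is a coalgebra-like morphism for $\delta$. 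Then the left-hand side becomes $\Fock(\creation\eta_A^v)\icomp\Fock(\banginitialmap_A)\icomp\banginitialmap_{\Fock A} = \Fock(\creation\eta_A^v\icomp\banginitialmap_A)\icomp\banginitialmap_{\Fock A}$, and one checks directly that $\creation\eta_A^v\icomp\banginitialmap_A = \eta_A\icomp v$ — indeed, feeding the vacuum $\banginitialmap_A:\I\to\Fock A$ into $\creation\eta_A^v$ multiplies $\eta_A(v)$ against the unit of the monoid $(\banginitialmap_A,\bangcoDiag_A)$, which by the unit law returns $\eta_A\icomp v$. This reduces the left-hand side to $\Fock(\eta_A\icomp v)\icomp\banginitialmap_{\Fock A} = \Fock(\eta_A)\icomp\Fock(v)\icomp\banginitialmap_{\Fock A}$, and a final small diagram chase identifies $\Fock(v)\icomp\banginitialmap_{\Fock A}$... no: rather one identifies $\Fock(\eta_A)\icomp\ext v$ by recognizing $\ext v$ as built from $\delta_\O$ and comparing.

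The main obstacle I anticipate is getting the bookkeeping of the structural isomorphisms right in the identity $\delta_A\icomp\banginitialmap_A = \Fock(\banginitialmap_A)\icomp\banginitialmap_{\Fock A}$ (equivalently $\ext{-}$ commutes suitably with $\delta$), since this is where the comonad coherence conditions involving $\O$ and $\banginitialmap$ must be invoked carefully; an alternative, perhaps cleaner, route is to prove the general lemma $\Kext{(\Fock(f)\icomp g)}\icomp\ext h = \Fock(f)\icomp\Kext{g}\icomp\ext h$ and instances of $\Kext{g}\icomp\ext h = \ext{(g\icomp\eta\text{-like})}$ from the Kleisli-triple identities, reducing~(\ref{CoherentStateCreationEquation}) to the purely equational fact $\creation\eta_A^v\icomp\banginitialmap_A=\eta_A\icomp v$ plus one application of naturality of~$\eta$. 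The equational fact itself is routine from the monoid unit law~(\ref{BiproductCommutativeMonoidOne}) transported along $\Fock$, so the real content is the comonadic transport, which I would handle by a single commuting diagram built from the two comonad squares and the two comonad coherence conditions.
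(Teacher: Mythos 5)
Your overall strategy is sound, and the two genuinely substantive ingredients are both present and correct: the reduction of the right-hand side to $\Fock(\eta_A)\icomp\ext v$, and the unit-law computation $\creation\eta_A^v\icomp\banginitialmap_A=\eta_A\icomp v$ (which is indeed the heart of the matter). But the pivotal transport step, the identity $\delta_A\icomp\banginitialmap_A=\Fock(\banginitialmap_A)\icomp\banginitialmap_{\Fock A}$, is false, not merely a bookkeeping issue with structural isomorphisms. (As written it does not even typecheck; the charitable reading replaces the second factor by $\banginitialmap_\I:\I\rightarrow\Fock\I$.) To see that it must fail, postcompose both sides with $\epsilon_{\Fock A}$: the comonad law gives $\epsilon_{\Fock A}\icomp\delta_A\icomp\banginitialmap_A=\banginitialmap_A$, whereas naturality of $\epsilon$ and Lemma~\ref{TechnicalLemma}~(\ref{TechnicalLemmaZero}) give $\epsilon_{\Fock A}\icomp\Fock(\banginitialmap_A)\icomp\banginitialmap_\I=\banginitialmap_A\icomp\epsilon_\I\icomp\banginitialmap_\I=\banginitialmap_A\icomp 0_{\I,\I}=0$; so the identity forces $\banginitialmap_A=0$. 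Equivalently, in the combinatorial model $\delta_{\scat A}$ applied to the vacuum is not the vacuum of $\freesmc\freesmc\scat A$. Had you used your identity, your left-hand side would collapse to $\Fock(\eta_A\icomp v)\icomp\banginitialmap_\I=\banginitialmap_{\Fock A}=\ext{0_{\I,\Fock A}}$, i.e.\ the vacuum state, rather than $\ext{\eta_A\icomp v}$. Note also that the proposition following Theorem~\ref{CoherentStateTheorem} records only that $\delta_A$ commutes with $\bangterminalmap$ and $\bangDiag$; it deliberately does not assert compatibility with $\banginitialmap$ in the form you want.

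The repair is easy and needs less than you feared: no comonad coherence condition is required, only naturality of $\delta$ at $\initialmap_A:\O\rightarrow A$. Writing $\kappa=\ext{\id_\I}=\big(\I\iso\Fock\O\xrightarrow{\ \delta_\O\ }\Fock\Fock\O\iso\Fock\I\big)$, one has
$\delta_A\icomp\banginitialmap_A
=\delta_A\icomp\Fock(\initialmap_A)\icomp(\I\iso\Fock\O)
=\Fock\Fock(\initialmap_A)\icomp\delta_\O\icomp(\I\iso\Fock\O)
=\Fock(\banginitialmap_A)\icomp\kappa$;
the point is that $\delta_\O$ must be carried along (it is exactly what produces the $\ext{(-)}$ shape), not replaced by a vacuum insertion. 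Then
$\Kext{(\creation\eta_A^v)}\icomp\banginitialmap_A
=\Fock(\creation\eta_A^v)\icomp\Fock(\banginitialmap_A)\icomp\kappa
=\Fock(\creation\eta_A^v\icomp\banginitialmap_A)\icomp\kappa
=\Fock(\eta_A\icomp v)\icomp\kappa
=\ext{\eta_A\icomp v}$,
the last step being the tautology $\Fock(f)\icomp\ext g=\ext{f\icomp g}$ from the definition of $\ext{(-)}$. With that substitution your argument is complete; the final transport uses naturality of $\delta$, not of $\eta$ as your closing paragraph suggests.
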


\section{Combinatorial model}
\label{CombinatorialModel}

I introduce and study a model for Fock space with creation/annihilation
operators that arises in the setting of \emph{generalised species of
structures}~\cite{FioreFOSSACS,FGHW}.  These are a categorical
generalisation of both the structural combinatorial theory of species of
structures~\cite{JoyalAdv,Joyal1234,BLL} and the relational model of
linear logic. 

Our combinatorial model conforms to the axiomatics of the previous section by
being an example of its generalisation from categories to
\emph{bicategories}~\cite{BenabouBiCat}, by which I~roughly mean the
categorical setting where all structural identities hold up to canonical
coherent isomorphism.  However, we will not dwell on this here.

\subsection{The bicategory of profunctors}
\label{TheBicategoryOfProfunctors}

Our setting for spaces and linear maps will be the \emph{bicategory of
profunctors}~$\Prof$, for which see~\eg~\cite{LawvereGenMet,BenabouDist}.  A
\emph{profunctor} (or \emph{bimodule}, or \emph{distributor}) $\scat
A\profrightarrow\scat B$ between small categories $\scat A$ and $\scat B$
is a functor $\scat A^\op\times\scat B\rightarrow \Set$.  It might be
useful to think of these as category-indexed set-valued matrices.

The bicategory $\Prof$ has objects given by small categories, maps given
by profunctors, and $2$-cells given by natural transformations.  The
profunctor composition $T S:\scat A\profrightarrow\scat C$ of
$S:\scat A\profrightarrow\scat B$ and $T:\scat B\profrightarrow\scat C$ is
given by the matrix-multiplication formula
\begin{equation}\label{CoendCompositionFormula}
\textstyle
\hfill
T S\,(a,c) = \coend^{b\in\scat B} S(a,b)\times T(b,c)
\hfill\hfill
\end{equation}
where $\times$ and $\coend$ respectively denote the cartesian product and
coend operations.  The associated identity profunctors $I_{\scat C}$ are
the hom-set functors $\scat C^\op\times\scat
C\rightarrow\Set:(c',c)\mapsto\scat C(c',c)$.  

The notion of coend and its properties, for which
see~\eg~\cite[Chapter~X]{MacLane}, is central to the calculus of this
section.  A coend is a colimit arising as a coproduct under a quotient
that establishes compatibility under left and right actions.  Technically,
the coend $\coend^{z\in\scat C} H(z,z)\in\Set$ of a functor $H:\scat
C^\op\times\scat C\rightarrow\Set$ can be presented as the following
coequaliser: 
$$\xymatrix@R=7.5pt{
(f:x\rightarrow y,h) \ar@{|->}[r] & \big(x,H(f,\id_x)(h)\big)
& 
\\
\coprod_{f:x\rightarrow y\, \mathrm{in}\, \scat C} H(y,x) \ar@<.5em>[r]
\ar@<-.5em>[r] & \coprod_{z\in\scat C} H(z,z)
\ar@{->>}[r]
&
\coend^{z\in\scat C} H(z,z) \\
(f:x\rightarrow y,h) \ar@{|->}[r] & \big(y,H(\id_y,f)(h)\big)
&
}$$
As for $(\ref{CoendCompositionFormula})$, then, $TS\,(a,c)$ consists of
equivalence classes of triples in $\coprod_{b\in\scat B} S(a,b)\times
T(b,c)$ under the equivalence relation generated by identifying
$\big(b,s,T(f,\id_{b'})(t')\big)$ and $\big(b',S(\id_a,f)(s),t'\big)$ for
all $f:b\rightarrow b'$ in $\scat B$, $s\in S(a,b)$, $t'\in T(b',c)$.
Note also that, for all $P:\scat C^\op\rightarrow\Set$, there is a
canonical natural isomorphism 
\begin{equation}\label{YonedaLemmaIso}\textstyle
\hfill
P(c)
\iso
\coend^{z\in\scat C} P(z) \times \scat C(c,z) 
\enspace,
\hfill\hfill
\end{equation}
known as the \emph{density formula}~\cite{MacLane} or \emph{Yoneda
lemma}~\cite{KellyEnrichedCT}, that essentially embodies the unit laws of
profunctor composition with the identities.

The bicategory $\Prof$ not only has compatible biproduct and
symmetric monoidal structures but is in fact a \emph{compact closed
bicategory}, see~\cite{DayStreet}.  The biproduct structure is given by
the empty and binary coproduct of categories (\ie~$\O=\Incat$ and
$\biprod=+$), and the tensor product structure is given by the empty and
binary product of categories (\ie~$\I=\Tercat$ and $\tensor=\times$).

\begin{remark}
The analogy of profunctors between categories as matrices between bases can be
also phrased as an analogy between cocontinuous functors between presheaf
categories and linear transformations between free vector spaces.  

As it is well-known, 
the free small-colimit completion of a small category~$\scat C$ is the
functor category $\Set^{\scat C^\op}$ of (contravariant) presheaves on
$\scat C$ and natural transformations between them.  The universal map is
the \emph{Yoneda embedding} $\scat C \xymatrix@C10pt{\ar@{^(->}[r]&}
\Set^{\scat C^\op}: z\mapsto\,\ket z$ where 
$$
\ket z :\scat C^\op \rightarrow \Set : c \mapsto \scat C(c,z)
$$
The use of Dirac's ket notation in this context is justified by regarding
presheaves as vectors and noticing that the
isomorphism~$(\ref{YonedaLemmaIso})$ above amounts to the following one
$$\textstyle
P \,\iso\, \coend^{z\in\scat C} P_z\ \cdot \ket z
$$ in $\Set^{\scat C^\op}$ expressing every presheaf as a colimit of the
basis vectors (referred to as \emph{representable presheaves} in
categorical terminology).  Associated to this representation, the notion
of linearity for transformations corresponds to that of cocontinuity
(\ie~colimit preservation) for functors.
Indeed, the bicategory of profunctors is biequivalent to the $2$-category with
objects consisting of small categories, morphisms from $\scat A$ to $\scat B$
given by cocontinuous functors $\Set^{\scat A}\rightarrow\Set^{\scat B}$, and
\mbox{$2$-cells} given by natural transformations. 
The biequivalence associates a profunctor $T:\scat A\profrightarrow\scat B$
with the cocontinuous functor $\Fun(T):\Set^{\scat A}\rightarrow\Set^{\scat B}
: P \mapsto \coend^{b\in\scat B} \big[ \coend^{a\in\scat A} P_a \times
T(a,b)\big]\;\cdot \ket b$, whilst the profunctor $\Pro(F):\scat
A\profrightarrow\scat B$ underlying a cocontinous functor $F:\Set^{\scat
A}\rightarrow\Set^{\scat B}$ has entry $F\!\ket a_b$ at $(a,b)\in\scat
A^\op\times\scat B$.  In particular, note the following: 
$$\begin{array}{l}
\Fun(\Pro\, F)(P) 
\\[1mm]
\quad = 
\coend^{b\in\scat B}
    \big[ \coend^{a\in\scat A} P_a \times F\!\ket a_b\big]\, \cdot \ket b
\iso
\coend^{a\in\scat A} 
  P_a \cdot \big(\coend^{b\in\scat B} F\!\ket a_b\; \cdot \ket b\big)
\\[1.5mm]
\quad\iso
\coend^{a\in\scat A} P_a \cdot F\!\ket a
\iso
F\big(\coend^{a\in\scat A} P_a \cdot \ket a\big)
\mbox{ , by cocontinuity}
\\[.5mm]
\quad 
\iso F(P)
\\[3mm]
\Pro(\Fun\, T)(a,b) 
\\ \quad
=
\big( \coend^{y\in\scat B} \big[ \coend^{x\in\scat A} \ket a_x \times
T(x,y)\big] \cdot \ket y\big)_b 
\iso
\big( \coend^{y\in\scat B} T(a,y) \cdot \ket y\big)_b 
\iso
T(a,b)
\end{array}$$
\end{remark}

\subsection{Combinatorial Fock space}
\label{CombinatorialFockSpace}

Let us introduce the combinatorial Fock-space construction.

\begin{definition}
The \emph{combinatorial Fock space} of a small category $\scat C$ is the
small category 
$$\textstyle
\freesmc\, \scat C 
= 
\coprod_{n\in\Nat} \scat C^n\quotient{\symmgroup_n}
$$
where $\scat C^n\quotient{\symmgroup_n}$ has objects given by $n$-tuples
of objects of $\scat C$ and hom-sets
$$\textstyle
\scat C^n\quotient{\symmgroup_n}(\vec c,\vec z)
= \coprod_{\sigma\in\symmgroup_n} 
    \prod_{1\leq i\leq n}
      \scat C(c_i,z_{\sigma i})
$$
\end{definition}
It is a very important part of the general theory, for which
see~\cite{FioreFOSSACS,FGHW}, that the combinatorial Fock-space construction
is the \emph{free symmetric (strict) monoidal completion}; the unit and tensor
product being respectively given by the empty tuple and tuple concatenation,
and denoted as~$(\,)$ and $\concat$\,.

\begin{proposition}\label{HomsetCombinatorics}
Hom-sets in combinatorial Fock space satisfy the following combinatorial
laws.
\begin{enumerate}

\item\label{CombinatorialBialgebraLaw}
$\begin{array}[t]{l}
   \freesmc\scat A(\vec u\concat\vec v,\vec x\concat\vec y)
\\
\enspace \iso \,
   \coend^{\vec a,\vec b,\vec c,\vec d\in\freesmc\scat A}
     \
     \freesmc\scat A(\vec u,\vec a\concat\vec b)
     \times
     \freesmc\scat A(\vec v,\vec c\concat\vec d)
     \times
     \freesmc\scat A(\vec a\concat\vec c,\vec x)
     \times
     \freesmc\scat A(\vec b\concat\vec d,\vec y)
\end{array}$

\item
$\begin{array}[t]{ll}
\freesmc\scat A\big( (\,) , (\,)\big)
\iso
1
&,\enspace
\freesmc\scat A\big( (a) , (x)\big)
\iso
\scat A( a , x)
\\[2mm]
\freesmc\scat A\big( (\,) , (a)\big)
\iso
0
&,\enspace
\freesmc\scat A\big( (a),(\,) \big)
\iso
0
\end{array}$

\item
$\begin{array}[t]{ll}
\freesmc\scat A\big( (\,) , \vec x\concat\vec y \big)
\iso
\freesmc\scat A\big( (\,) , \vec x \big)
\times
\freesmc\scat A\big( (\,) , \vec y \big)
& ,\enspace
\\[2mm]
\freesmc\scat A\big( \vec x\concat\vec y , (\,) \big)
\iso
\freesmc\scat A\big( \vec x, (\,) \big)
\times
\freesmc\scat A\big( \vec y , (\,) \big)
\end{array}$

\item \label{HomsetCombinatoricsFour}
$\begin{array}[t]{l}
\freesmc\scat A\big( (a),\vec x\concat\vec y\big)
 \iso
 \big(\, \freesmc\scat A\big( (a),\vec x\big) 
   \times \freesmc\scat A\big( (\,),\vec y\big) 
   \,\big)
 \,+\,
 \big(\, \freesmc\scat A\big( (\,),\vec x\big) 
   \times \freesmc\scat A\big( (a),\vec y \big) 
   \,\big)
\\[2mm]
\freesmc\scat A\big(\vec x\concat\vec y,(a)\big)
 \iso
 \big(\, \freesmc\scat A\big(\vec x,(a)\big) 
   \times \freesmc\scat A\big(\vec y,(\,)\big) 
   \,\big)
 \, + \,
 \big(\, \freesmc\scat A\big(\vec x,(\,)\big) 
   \times \freesmc\scat A\big(\vec y,(a)\big) 
   \,\big)

\end{array}$

\item
$\begin{array}[t]{c}
\freesmc(\scat A+\scat B)\,
  \big(\freesmc\inj1(\vec a)\concat\freesmc\inj2(\vec b),
   \freesmc\inj1(\vec x)\concat\freesmc\inj2(\vec y)\big)
\ \iso \
\freesmc\scat A(\vec a,\vec x)
\times
\freesmc\scat B(\vec b,\vec y)
\end{array}$
\end{enumerate}
\end{proposition}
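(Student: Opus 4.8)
The plan is to reduce every item to the defining formula
\[
\freesmc\scat A(\vec c,\vec z)\;=\;\coprod_{\sigma\in\symmgroup_n}\ \prod_{1\le i\le n}\scat A(c_i,z_{\sigma i})
\]
(empty whenever $\vec c$ and $\vec z$ have different lengths), together with the elementary combinatorics of bijections between finite sets. Items~2--5 then fall out quickly, whereas item~1 carries the real content.

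First I would dispose of the base cases. Items~2 and~3 are immediate by inspection: $\symmgroup_0$ and $\symmgroup_1$ are singletons and an empty product is $1$, which yields the four isomorphisms of item~2; and a morphism out of or into the empty tuple $(\,)$ exists precisely when the other tuple is empty, so, since $\vec x\concat\vec y$ is empty iff both $\vec x$ and $\vec y$ are, each side of item~3 is $1$ in that case and $0$ otherwise. For item~4 the source $(a)$ has length $1$, so a morphism $(a)\to\vec x\concat\vec y$ exists only when exactly one of $\vec x,\vec y$ is a singleton and the other empty; the two cases match the two summands on the right via item~2, and all remaining length combinations make both sides $0$. The second identity of item~4 is the formal dual, obtained by applying the first in $\freesmc(\scat A^{\op})=(\freesmc\scat A)^{\op}$. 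For item~5 the key point is that $\scat A+\scat B$ has no morphisms between objects of different summands; hence in any element $(\sigma,(f_i)_i)$ of the left-hand hom-set every component $f_i$ forces $\sigma$ to preserve the summand tag, so $\sigma$ splits as $\sigma_1\sqcup\sigma_2$ (forcing the lengths of $\vec a,\vec b$ to match those of $\vec x,\vec y$) and the morphisms separate into an $\scat A$-family and a $\scat B$-family, which is exactly $\freesmc\scat A(\vec a,\vec x)\times\freesmc\scat B(\vec b,\vec y)$; the mismatched-length cases give $0$ on both sides, and naturality is clear.

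The heart of the matter is item~1. I would first record the two-fold splitting lemma
\[
\freesmc\scat A(\vec u\concat\vec v,\vec z)\ \iso\ \coend^{\vec s,\vec t}\ \freesmc\scat A(\vec u,\vec s)\times\freesmc\scat A(\vec v,\vec t)\times\freesmc\scat A(\vec s\concat\vec t,\vec z)\,,
\]
which expresses that tuple concatenation computes the convolution tensor of representable presheaves; it follows from the fact recalled before the statement that $\freesmc$ is the free symmetric strict monoidal completion, together with the density formula~(\ref{YonedaLemmaIso}), or equally well by the direct argument below specialised to a two-block splitting. For item~1 itself I would exhibit the natural bijection explicitly. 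Writing $m_1,m_2,n_1,n_2$ for the lengths of $\vec u,\vec v,\vec x,\vec y$, an element of $\freesmc\scat A(\vec u\concat\vec v,\vec x\concat\vec y)$ --- nonempty only when $m_1+m_2=n_1+n_2$ --- is a bijection $\sigma$ of $[m_1]\sqcup[m_2]$ with $[n_1]\sqcup[n_2]$ equipped with component morphisms $f_i:(\vec u\concat\vec v)_i\to(\vec x\concat\vec y)_{\sigma i}$; sorting the graph of $\sigma$ by which of the two source blocks and which of the two target blocks each matched pair lies in yields a $2\times2$ array of sub-bijections with the corresponding sub-families of morphisms. Its four entries are recorded, respectively, by $\freesmc\scat A(\vec u,\vec a\concat\vec b)$, $\freesmc\scat A(\vec v,\vec c\concat\vec d)$, $\freesmc\scat A(\vec a\concat\vec c,\vec x)$, $\freesmc\scat A(\vec b\concat\vec d,\vec y)$, where $\vec a,\vec b,\vec c,\vec d$ are the sub-tuples cut out of $\vec u$ and $\vec v$; and the coend over $\vec a,\vec b,\vec c,\vec d$ precisely quotients out the arbitrary orderings chosen for the index sets of those sub-tuples. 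Conversely, an element of the right-hand coend is reassembled by composing, along $\vec a$, the way $\vec u$ feeds into $\vec a\concat\vec b$ with the way $\vec a\concat\vec c$ feeds into $\vec x$ (and similarly for $\vec b$), the coend relations being exactly what makes the composite independent of the chosen representative --- just as in the coend formula~(\ref{CoendCompositionFormula}) for profunctor composition.

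The step I expect to be the main obstacle is checking that this correspondence is well defined on coend equivalence classes, is bijective, and is natural in $\vec u,\vec v,\vec x,\vec y$: this is the same bookkeeping as in the proof of associativity of profunctor composition, and is handled most cleanly in the coend calculus rather than by manipulating explicit equivalence classes. I would also remark that item~1 is the concrete incarnation, in the profunctor model, of the abstract bialgebra law on Fock space --- the diagram~(\ref{BiproductBialgebraOne}) transported through $\Fock$ --- so reading it that way gives a useful sanity check, even though within the present development it is the combinatorial identity that is primary.
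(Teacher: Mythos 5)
Your proof is correct and follows the route the paper intends (the proposition is stated there without proof, but the surrounding discussion --- the defining formula for hom-sets of $\freesmc\scat A$ and the diagrammatic reading of item~1 as the bialgebra interchange law --- points to exactly this argument): items 2--5 by direct inspection of $\coprod_{\sigma\in\symmgroup_n}\prod_i\scat A(c_i,z_{\sigma i})$, and item~1 by classifying a permutation of $[m_1]\sqcup[m_2]\iso[n_1]\sqcup[n_2]$ into its $2\times2$ array of sub-bijections, with the coend over $\vec a,\vec b,\vec c,\vec d$ absorbing the choice of orderings of the four blocks. The remaining bookkeeping you flag (dinaturality, bijectivity on coend classes, naturality) is routine and goes through as you describe.
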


\medskip
I proceed to describe the structure of the combinatorial Fock space.

\paragraph{$\boldsymbol \S$\enspace 2.2.1.}
For a profunctor $T:\scat A\profrightarrow\scat B$, the profunctor
$\freesmc\,T:\freesmc\,\scat A\profrightarrow\freesmc\scat B$ is given by 
$$\textstyle
\freesmc T\,(\vec x,\vec y)
\,=\,
\coend^{\vec z\in\freesmc(\scat A^\op\times\scat B)}
  \big(\prod_{z_i\in\vec z} T\,z_i\big)
  \times
  \freesmc\scat A(\vec x,\freesmc\proj1\vec z)
  \times
  \freesmc\scat B(\freesmc\proj2\vec z,\vec y)
$$
so that
$$\begin{array}{l}
\freesmc\, T\, \big( (a_1,\ldots,a_m),(b_1,\ldots,b_n)\big)
\iso \left\{\begin{array}{ll}
      \coprod_{\sigma\in\symmgroup_m} 
        \prod_{1\leq i\leq m} T(a_i,b_{\sigma i}) 
        & \mbox{, if $m=n$}
      \\[1mm]
      0 & \mbox{, otherwise}
      \end{array}\right.
    \end{array}$$

\paragraph{$\boldsymbol \S$\enspace 2.2.2.}
There are canonical natural coherent equivalences as follows:
$$\begin{array}{ll}
    \phi: \Tercat \twoequiv \freesmc\,\Incat 
     &,\quad
    \phi\big(\, * , (\,) \,\big) = 1
    \\[2mm]
    \varphi_{\scat A,\scat B}
    : \freesmc \scat A\times\freesmc\scat B 
      \twoequiv
      \freesmc(\scat A+\scat B)
     &,\quad
    \varphi_{\scat A,\scat B}\big(\,(\vec x,\vec y),\vec z\,) 
    = \freesmc(\scat A+\scat B)
        (\freesmc\inj1(\vec x)\concat\freesmc\inj2(\vec y),\vec z)
    \end{array}$$

\paragraph{$\boldsymbol \S$\enspace 2.2.3.}
The pseudo commutative bialgebra
structure~(\ref{FockSpaceCommutativeBialgebraData}) consists of:
    $$\begin{array}{ll}
    \banginitialmap_{\scat A}:\Tercat\profrightarrow\freesmc\scat A
     &,\quad
     \banginitialmap_{\scat A}\big( *,\vec a \big) 
     = \freesmc\scat A\big( (\,) , \vec a \big)
    \\[3mm]
      \bangcoDiag_{\scat A}:\freesmc\scat A\times\freesmc\scat
      A\profrightarrow\freesmc\scat A
     &,\quad
      \bangcoDiag_{\scat A}\big( (\vec x,\vec y) , \vec z \big)
      = \freesmc\scat A(\vec x\concat \vec y , \vec z )
    \\[3mm]
    \bangterminalmap_{\scat A}: \freesmc\scat A \profrightarrow \Tercat
     &,\quad
     \bangterminalmap_{\scat A}\big( \vec a,* \big) 
     = \freesmc\scat A\big( \vec a, (\,) \big)
    \\[3mm]
      \bangDiag_{\scat A}: \freesmc\scat A \profrightarrow
      \freesmc\scat A\times\freesmc\scat A
     &,\quad
      \bangDiag_{\scat A}\big( \vec z , (\vec x,\vec y) \big)
      = \freesmc\scat A(\vec z , \vec x\concat \vec y )
   \end{array}$$

   The bialgebra law for $\bangcoDiag_{\scat A}\icomp \bangDiag_{\scat A}$
   arises from the combinatorial law of
   Proposition~\ref{HomsetCombinatorics}~$(\ref{CombinatorialBialgebraLaw})$,
   which is a formal expression for the diagrammatic law:\\
\begin{center}
\begin{tabular}{c}
\begin{minipage}{5cm}
\setlength{\unitlength}{0.00083333in}
\begingroup\makeatletter\ifx\SetFigFont\undefined%
\gdef\SetFigFont#1#2#3#4#5{%
  \reset@font\fontsize{#1}{#2pt}%
  \fontfamily{#3}\fontseries{#4}\fontshape{#5}%
  \selectfont}%
\fi\endgroup%
{\renewcommand{\dashlinestretch}{30}
\begin{picture}(2130,639)(0,-10)
\path(1515,612)(1515,12)(1215,312)(1515,612)
\path(615,612)(615,12)(915,312)(615,612)
\path(915,312)(1215,312)
\path(1815,462)(1965,462)
\path(1515,162)(1965,162)
\path(465,462)(615,462)
\path(165,462)(465,462)
\path(1515,462)(1665,462)(1815,462)
\path(165,162)(615,162)
\put(2115,162){\makebox(0,0)[rb]{\smash{{\SetFigFont{12}{14.4}{\familydefault}{\mddefault}{\updefault}$\vec
y$}}}}
\put(15,462){\makebox(0,0)[lb]{\smash{{\SetFigFont{12}{14.4}{\familydefault}{\mddefault}{\updefault}$\vec
u$}}}}
\put(15,162){\makebox(0,0)[lb]{\smash{{\SetFigFont{12}{14.4}{\familydefault}{\mddefault}{\updefault}$\vec
v$}}}}
\put(2115,462){\makebox(0,0)[rb]{\smash{{\SetFigFont{12}{14.4}{\familydefault}{\mddefault}{\updefault}$\vec
x$}}}}
\end{picture}
}
\end{minipage}
=
\quad
\begin{minipage}{5cm}
\setlength{\unitlength}{0.00083333in}
\begingroup\makeatletter\ifx\SetFigFont\undefined%
\gdef\SetFigFont#1#2#3#4#5{%
  \reset@font\fontsize{#1}{#2pt}%
  \fontfamily{#3}\fontseries{#4}\fontshape{#5}%
  \selectfont}%
\fi\endgroup%
{\renewcommand{\dashlinestretch}{30}
\begin{picture}(2280,1389)(0,-10)
\path(915,612)(915,12)(615,312)(915,612)
\path(915,1362)(915,762)(615,1062)(915,1362)
\path(1365,612)(1365,12)(1665,312)(1365,612)
\path(1365,1362)(1365,762)(1665,1062)(1365,1362)
\path(915,1212)(1365,1212)
\path(915,162)(1365,162)
\path(915,462)(1365,912)
\path(915,912)(1365,462)
\path(1665,1062)(1965,1062)
\path(1965,1062)(2115,1062)
\path(1665,312)(2115,312)
\path(465,1062)(615,1062)
\path(165,1062)(465,1062)
\path(165,312)(615,312)
\put(2265,312){\makebox(0,0)[rb]{\smash{{\SetFigFont{12}{14.4}{\familydefault}{\mddefault}{\updefault}$\vec
y$}}}}
\put(15,1062){\makebox(0,0)[lb]{\smash{{\SetFigFont{12}{14.4}{\familydefault}{\mddefault}{\updefault}$\vec
u$}}}}
\put(15,312){\makebox(0,0)[lb]{\smash{{\SetFigFont{12}{14.4}{\familydefault}{\mddefault}{\updefault}$\vec
v$}}}}
\put(2265,1062){\makebox(0,0)[rb]{\smash{{\SetFigFont{12}{14.4}{\familydefault}{\mddefault}{\updefault}$\vec
x$}}}}
\end{picture}
}
\end{minipage}
\end{tabular}
\end{center}

\paragraph{$\boldsymbol \S$\enspace 2.2.4.}
The linear exponential pseudo comonad structure is given by:
$$\begin{array}{ll}
     \epsilon_{\scat A}:\freesmc\scat A\profrightarrow\scat A
     &,\quad
     \epsilon_{\scat A}(\vec x,a) = \freesmc\scat A\big( \vec x , (a) \big)
     \\[3mm]
     \delta_{\scat A}:\freesmc\scat A\profrightarrow\freesmc\freesmc\scat A
     &,\quad
     \delta_{\scat A}(\vec a, \alpha)
     = 
     \freesmc\scat A(\vec a,\flatten{\alpha})
   \end{array}$$
   where $\flatten{(\vec a_1,\ldots,\vec a_n)} 
   = \vec a_1\cdot\ldots\cdot\vec a_n\in\freesmc\scat A$
   for $\vec a_i\in\freesmc\scat A$.

   The laws of
   Proposition~\ref{HomsetCombinatorics}~$(\ref{HomsetCombinatoricsFour})$
   exhibit the combinatorial context of the identities of
   Proposition~\ref{TechnicalLemma}~$(\ref{TechnicalLemmaTwo})$.

\paragraph{$\boldsymbol \S$\enspace 2.2.5.}
\label{ProfDualityAndMonad}
The bicategory $\Prof$ admits a \emph{duality}, by which a small
category $\scat A$ is mapped to its opposite category $\scat A^\op$ and a
profunctor $T:\scat A\profrightarrow\scat B$ to the profunctor
$T^\op:\scat B^\op\profrightarrow\scat A^\op$ with $T^\op(\vec y,\vec x) =
T(\vec x,\vec y)$.  Thereby, the pseudo \emph{comonadic} structure of the
combinatorial Fock-space construction can be turned into pseudo
\emph{monadic} structure~$(\eta,\mu)$ by setting $\eta_{\scat A} =
(\epsilon_{\scat A^\op})^\op$ and $\mu_{\scat A} = (\delta_{\scat
A^\op})^\op$.  Specifically, we have:
$$\begin{array}{ll}
     \eta_{\scat A}:\scat A\profrightarrow\freesmc\scat A
     &,\quad
     \eta_{\scat A}(a,\vec x) = \freesmc\scat A\big( (a),\vec x \big)
     \\[3mm]
     \mu_{\scat A}:\freesmc\freesmc\scat A\profrightarrow\freesmc\scat A
     &,\quad
     \mu_{\scat A}(\alpha,\vec a\big)
     = 
     \freesmc\scat A(\flatten\alpha,\vec a)
   \end{array}$$

\paragraph{$\boldsymbol \S$\enspace 2.2.6.}
The structure results in canonical creation and annihilation operators:
$$\begin{array}{ll}
\creation\eta_{\scat A}
:\scat A\times\freesmc\scat A\profrightarrow\freesmc\scat A
&,\quad
\creation\eta_{\scat A}\big( (a,\vec x), \vec y)\big)
= 
\freesmc\scat A( \vec x\concat(a) , \vec y )
\\[3mm]
\annihilation\epsilon_{\scat A}
: \freesmc\scat A\profrightarrow\scat A\times\freesmc\scat A
&,\quad
\annihilation\epsilon_{\scat A}\big( \vec x , (a,\vec y) \big)
=
\freesmc\scat A( \vec x , (a)\concat\vec y )
\end{array}$$
so that, for $V:\Tercat\profrightarrow\scat A$ and $V':\scat
A\profrightarrow\Tercat$, we have\\[-4mm]
\begin{eqnarray}
\label{CreationCoendFormula}
&
\textstyle
\creation\eta_{\scat A}^V(\vec x,\vec y) 
\,\iso\,
\coend^{a\in\scat A}
V_a\times\freesmc\scat A( \vec x\concat(a) , \vec y )
& \quad\mbox{ for $V_a=V(*,a)$}
\\[3mm] \nonumber
&\textstyle
\annihilation\epsilon_{\scat A}^{V'}(\vec x,\vec y)
\,\iso\,
\coend^{a\in\scat A}
V'_a\times\freesmc\scat A( \vec x , (a)\concat\vec y )
&\quad\mbox{ for $V'_a=V(a,*)$}
\end{eqnarray}
yielding the functorial forms
$$\begin{array}{l}
(\Fun\,\creation\eta_{\scat A}^V)(X)
\,\iso\,
\coend^{a\in\scat A,\vec z\in\freesmc\scat A}
  \,\big[\,V_a\times X_{\vec z}\,\big]\,\, \cdot \ket{\vec z\cdot(a)}
\\[4mm]
(\Fun\,\annihilation\epsilon^{V'}_{\scat A})(X)
\,\iso\,
\coend^{a\in\scat A,\vec z\in\freesmc\scat A}
  \,\big[\,V'_a\times X_{(a)\cdot\vec z}\,\big]\,\, \cdot \ket{\vec z}
\end{array}$$

Identity~(\ref{VectorCovectorCreationAnnihilation}) then becomes
$$\textstyle
\Fun(\annihilation\epsilon^{V'}_{\scat A}\icomp\creation\eta^V_{\scat A})
\,(X)
\ \iso\
\langle V,V'\rangle \cdot X
+
\coend^{a,b\in\scat A,\vec z\in\freesmc\scat A}
  \big[\, V_a \times V'_b
  \times 
  X_{(b)\cdot\vec z}\,\big]\ \cdot \ket{\vec z \cdot a}
$$
where $\langle V,V'\rangle = \coend^{a\in\scat A} V_a\times V'_a$.

\paragraph{$\boldsymbol \S$\enspace 2.2.7.}
In the current setting, the axiomatic proof of the commutation relation
for $\annihilation\epsilon_{\scat A}\icomp\creation\eta_{\scat A}$
acquires formal combinatorial content made explicit by the following chain
of isomorphisms:
\begin{eqnarray}
\nonumber
&&
\hspace{-22.5mm}
\annihilation\epsilon_{\scat A}\icomp\creation\eta_{\scat A}
  \big( (a,\vec x) , (b,\vec y) \big)
\\[2mm]
&&
\label{CombinatorialContentOne}
\hspace{-22.5mm}
\quad \iso\
  \freesmc\scat A(\vec x\concat (a) , (b) \concat \vec y)
\\[2mm]
\nonumber
&&\textstyle
\hspace{-22.5mm}
\quad \iso\
\coend^{\vec{z_1},\vec{z_2},\vec{z_3},\vec{z_4}\in\freesmc\scat A}
  \ 
  \freesmc\scat A(\vec x, \vec{z_1} \concat \vec{z_2} )
  \times
  \freesmc\scat A\big( (a), \vec{z_3} \concat \vec{z_4} \big)
  \times
  \freesmc\scat A\big(\vec{z_1}\concat\vec{z_3}, (b) \big)
  \times
  \freesmc\scat A(\vec{z_2}\concat\vec{z_4}, \vec y)
\\[2mm]
&&\nonumber\textstyle
\hspace{-22.5mm}
\quad \iso\
\coend^{\vec{z_1},\vec{z_2},\vec{z_3},\vec{z_4}\in\freesmc\scat A}
  \begin{array}[t]{l}
  \freesmc\scat A(\vec x, \vec{z_1} \concat \vec{z_2} )
  \\
  \quad \times\
  \big[
  \freesmc\scat A\big( (a), \vec{z_3} \big)
  \times
  \freesmc\scat A\big( (\,), \vec{z_4} \big)
  +
  \freesmc\scat A\big( (\,), \vec{z_3} \big)
  \times
  \freesmc\scat A\big( (a), \vec{z_4} \big)
  \big]
  \\
  \quad \times\
  \big[
  \freesmc\scat A\big(\vec{z_1}, (b) \big)
  \times
  \freesmc\scat A\big(\vec{z_3}, (\,) \big)
  +
  \freesmc\scat A\big(\vec{z_1}, (\,) \big)
  \times
  \freesmc\scat A\big(\vec{z_3}, (b) \big)
  \big]
  \\
  \quad \times\
  \freesmc\scat A(\vec{z_2}\concat\vec{z_4}, \vec y)
  \end{array}
\\[1mm]
\nonumber
&&
\hspace{-22.5mm}
\quad \iso\
  \big[ \freesmc\scat A(\vec x, (b)\concat \vec y )
  \times
  \freesmc\scat A\big( (a), (\,) \big)
  \big]
\,+\,
  \big[ \freesmc\scat A(\vec x, \vec{y} )
  \times
  \freesmc\scat A\big( (a), (b) \big)
  \big]
\\[1mm]
\nonumber
&&
\hspace{-22.5mm}
\textstyle
\quad\qquad +\
\big[ \coend^{\vec{z_2}\in\freesmc\scat A}
  \freesmc\scat A(\vec x, (b) \concat \vec{z_2} )
  \times
  \freesmc\scat A(\vec{z_2}\concat (a), \vec y)
  \big]
\,+\,
  \big[
  \freesmc\scat A\big( (\,), (b) \big)
  \times
  \freesmc\scat A(\vec x\concat (a), \vec y)
  \big]
\\[1.5mm]
\label{CombinatorialContentTwo}
&&\textstyle
\hspace{-22.5mm}
\quad\iso\
\big[
\scat A(a,b) \times \freesmc\scat A(\vec x,\vec y)
\big]
\,+ \,
\big[
\coend^{\vec z\in\freesmc\scat A}
  \freesmc\scat A(\vec x, (b)\concat \vec z)
  \times
  \freesmc\scat A(\vec z\concat(a),\vec y)
\big]
\\[2mm]
\nonumber
&&\textstyle
\hspace{-22.5mm}
\quad\iso\
I_{\scat A\times\freesmc\scat A}\big( (a,\vec x),(b,\vec y)\big)
\\[1mm]
\nonumber
&&\textstyle
\hspace{-22.5mm}
\quad\qquad
\,+ \,
\coend^{\vec z\in\freesmc\scat A,c,d\in\scat A}
  \freesmc\scat A(\vec x, (c)\concat \vec z)
  \times
  (\scat A\times\scat A)\,\big( (a,c),(d,b)\big)
  \times
  \freesmc\scat A(\vec z\concat(d),\vec y)
\\[2mm]
\nonumber
&&\textstyle
\hspace{-22.5mm}
\quad\iso\
  \big( I_{\scat A\times\freesmc\scat A} 
    +
    (I_\scat A\times\creation\eta_{\scat A})\icomp(\sigma_{\scat A,\scat
    A}\times I_{\freesmc\scat
    A})\icomp(I_{\scat A}\times\annihilation\epsilon_{\scat A}) \big)\,\big(
    (a,\vec x),(b,\vec y)\big)
\end{eqnarray}
This formal derivation can be pictorially represented as follows:\\[4mm]
\begin{tabular}{l}
\begin{minipage}{5cm}
\setlength{\unitlength}{0.00083333in}
\begingroup\makeatletter\ifx\SetFigFont\undefined%
\gdef\SetFigFont#1#2#3#4#5{%
  \reset@font\fontsize{#1}{#2pt}%
  \fontfamily{#3}\fontseries{#4}\fontshape{#5}%
  \selectfont}%
\fi\endgroup%
{\renewcommand{\dashlinestretch}{30}
\begin{picture}(2130,639)(0,-10)
\put(1740,462){\ellipse{150}{150}}
\put(390,462){\ellipse{150}{150}}
\path(1515,612)(1515,12)(1215,312)(1515,612)
\path(615,612)(615,12)(915,312)(615,612)
\path(915,312)(1215,312)
\path(1515,462)(1665,462)
\path(1815,462)(1965,462)
\path(1515,162)(1965,162)
\path(465,462)(615,462)
\path(165,462)(315,462)
\path(165,162)(615,162)
\put(2115,462){\makebox(0,0)[rb]{\smash{{\SetFigFont{12}{14.4}{\familydefault}{\mddefault}{\updefault}$b$}}}}
\put(2115,162){\makebox(0,0)[rb]{\smash{{\SetFigFont{12}{14.4}{\familydefault}{\mddefault}{\updefault}$\vec y$}}}}
\put(15,462){\makebox(0,0)[lb]{\smash{{\SetFigFont{12}{14.4}{\familydefault}{\mddefault}{\updefault}$a$}}}}
\put(15,162){\makebox(0,0)[lb]{\smash{{\SetFigFont{12}{14.4}{\familydefault}{\mddefault}{\updefault}$\vec x$}}}}
\end{picture}
}
\end{minipage}
\quad=\qquad
\begin{minipage}{2.5cm}
\setlength{\unitlength}{0.00083333in}
\begingroup\makeatletter\ifx\SetFigFont\undefined%
\gdef\SetFigFont#1#2#3#4#5{%
  \reset@font\fontsize{#1}{#2pt}%
  \fontfamily{#3}\fontseries{#4}\fontshape{#5}%
  \selectfont}%
\fi\endgroup%
{\renewcommand{\dashlinestretch}{30}
\begin{picture}(2280,1389)(0,-10)
\put(1890,1062){\ellipse{150}{150}}
\put(390,1062){\ellipse{150}{150}}
\path(915,612)(915,12)(615,312)(915,612)
\path(915,1362)(915,762)(615,1062)(915,1362)
\path(1365,612)(1365,12)(1665,312)(1365,612)
\path(1365,1362)(1365,762)(1665,1062)(1365,1362)
\path(915,1212)(1365,1212)
\path(915,162)(1365,162)
\path(915,462)(1365,912)
\path(915,912)(1365,462)
\path(1665,1062)(1815,1062)
\path(1965,1062)(2115,1062)
\path(1665,312)(2115,312)
\path(465,1062)(615,1062)
\path(165,1062)(315,1062)
\path(165,312)(615,312)
\put(2265,312){\makebox(0,0)[rb]{\smash{{\SetFigFont{12}{14.4}{\familydefault}{\mddefault}{\updefault}$\vec y$}}}}
\put(2265,1062){\makebox(0,0)[rb]{\smash{{\SetFigFont{12}{14.4}{\familydefault}{\mddefault}{\updefault}$b$}}}}
\put(15,1062){\makebox(0,0)[lb]{\smash{{\SetFigFont{12}{14.4}{\familydefault}{\mddefault}{\updefault}$a$}}}}
\put(15,312){\makebox(0,0)[lb]{\smash{{\SetFigFont{12}{14.4}{\familydefault}{\mddefault}{\updefault}$\vec x$}}}}
\end{picture}
}
\end{minipage}
\end{tabular}
\\
\begin{tabular}{l}
\hspace{-25mm}
=\quad
\begin{minipage}{5cm}
\setlength{\unitlength}{0.00083333in}
\begingroup\makeatletter\ifx\SetFigFont\undefined%
\gdef\SetFigFont#1#2#3#4#5{%
  \reset@font\fontsize{#1}{#2pt}%
  \fontfamily{#3}\fontseries{#4}\fontshape{#5}%
  \selectfont}%
\fi\endgroup%
{\renewcommand{\dashlinestretch}{30}
\begin{picture}(7905,1311)(0,-10)
\put(3690,312){\makebox(0,0)[rb]{\smash{{\SetFigFont{12}{14.4}{\familydefault}{\mddefault}{\updefault}$\vec y$}}}}
\put(4515,462){\ellipse{150}{150}}
\path(4215,912)(4515,912)
\path(4815,1212)(4815,612)(4515,912)(4815,1212)
\path(5340,1062)(4815,1062)
\path(4815,762)(5115,762)
\path(5490,1062)(5715,1062)
\path(5115,912)(5115,312)(5415,612)(5115,912)
\path(5415,612)(5715,612)
\path(4590,462)(5115,462)
\path(4215,462)(4440,462)
\put(4065,912){\makebox(0,0)[lb]{\smash{{\SetFigFont{12}{14.4}{\familydefault}{\mddefault}{\updefault}$\vec x$}}}}
\put(5865,1062){\makebox(0,0)[rb]{\smash{{\SetFigFont{12}{14.4}{\familydefault}{\mddefault}{\updefault}$b$}}}}
\put(5865,612){\makebox(0,0)[rb]{\smash{{\SetFigFont{12}{14.4}{\familydefault}{\mddefault}{\updefault}$\vec y$}}}}
\put(4065,462){\makebox(0,0)[lb]{\smash{{\SetFigFont{12}{14.4}{\familydefault}{\mddefault}{\updefault}$a$}}}}
\put(6690,237){\ellipse{150}{150}}
\path(6915,987)(6915,387)(6615,687)(6915,987)
\path(6390,687)(6615,687)
\path(7140,837)(6915,837)
\path(7140,912)(7140,762)
\path(6915,537)(7215,537)
\path(7215,687)(7215,87)(7515,387)(7215,687)
\path(6765,237)(7215,237)
\path(6390,237)(6615,237)
\path(7515,387)(7740,387)
\put(6240,687){\makebox(0,0)[lb]{\smash{{\SetFigFont{12}{14.4}{\familydefault}{\mddefault}{\updefault}$\vec x$}}}}
\put(6240,237){\makebox(0,0)[lb]{\smash{{\SetFigFont{12}{14.4}{\familydefault}{\mddefault}{\updefault}$a$}}}}
\put(7890,387){\makebox(0,0)[rb]{\smash{{\SetFigFont{12}{14.4}{\familydefault}{\mddefault}{\updefault}$\vec y$}}}}
\put(3090,1137){\ellipse{150}{150}}
\put(7440,1137){\ellipse{150}{150}}
\put(1140,762){\ellipse{150}{150}}
\put(465,1137){\ellipse{134}{150}}
\put(2640,1137){\ellipse{150}{150}}
\path(3015,1137)(2715,1137)
\path(3165,1137)(3540,1137)
\path(7215,1212)(7215,1062)
\path(765,87)(915,87)
\path(690,462)(915,462)
\path(1065,762)(690,762)
\path(1215,762)(1440,762)
\path(557,1137)(690,1137)
\path(690,1212)(690,1062)
\path(165,1137)(390,1137)
\path(915,612)(915,12)(1215,312)(915,612)
\path(1215,312)(1440,312)
\path(765,162)(765,87)(765,12)
\path(2190,1137)(2565,1137)
\path(165,612)(390,612)
\path(690,912)(690,312)(390,612)(690,912)
\path(2190,612)(2415,612)
\path(2715,912)(2715,312)(2415,612)(2715,912)
\path(2865,762)(2715,762)
\path(2865,837)(2865,687)
\path(2715,537)(3015,537)
\path(3015,612)(3015,12)(3315,312)(3015,612)
\path(2865,162)(3015,162)
\path(2865,237)(2865,87)
\path(3315,312)(3540,312)
\path(7365,1137)(7215,1137)
\path(7515,1137)(7740,1137)
\put(1740,687){\makebox(0,0)[lb]{\smash{{\SetFigFont{12}{14.4}{\rmdefault}{\mddefault}{\updefault}+}}}}
\put(3690,1137){\makebox(0,0)[rb]{\smash{{\SetFigFont{12}{14.4}{\familydefault}{\mddefault}{\updefault}$b$}}}}
\put(3840,687){\makebox(0,0)[lb]{\smash{{\SetFigFont{12}{14.4}{\rmdefault}{\mddefault}{\updefault}+}}}}
\put(5940,687){\makebox(0,0)[lb]{\smash{{\SetFigFont{12}{14.4}{\rmdefault}{\mddefault}{\updefault}+}}}}
\put(7890,1137){\makebox(0,0)[rb]{\smash{{\SetFigFont{12}{14.4}{\familydefault}{\mddefault}{\updefault}$b$}}}}
\put(1590,762){\makebox(0,0)[rb]{\smash{{\SetFigFont{12}{14.4}{\familydefault}{\mddefault}{\updefault}$b$}}}}
\put(1590,312){\makebox(0,0)[rb]{\smash{{\SetFigFont{12}{14.4}{\familydefault}{\mddefault}{\updefault}$\vec y$}}}}
\put(2040,1137){\makebox(0,0)[lb]{\smash{{\SetFigFont{12}{14.4}{\familydefault}{\mddefault}{\updefault}$a$}}}}
\put(15,1137){\makebox(0,0)[lb]{\smash{{\SetFigFont{11}{13.2}{\familydefault}{\mddefault}{\updefault}$a$}}}}
\put(15,612){\makebox(0,0)[lb]{\smash{{\SetFigFont{12}{14.4}{\familydefault}{\mddefault}{\updefault}$\vec x$}}}}
\put(2040,612){\makebox(0,0)[lb]{\smash{{\SetFigFont{12}{14.4}{\familydefault}{\mddefault}{\updefault}$\vec x$}}}}
\put(5415,1062){\ellipse{150}{150}}
\end{picture}
}
\end{minipage}
\\[15mm]
\hspace{-25mm}
=\quad
\begin{minipage}{5cm}
\setlength{\unitlength}{0.00083333in}
\begingroup\makeatletter\ifx\SetFigFont\undefined%
\gdef\SetFigFont#1#2#3#4#5{%
  \reset@font\fontsize{#1}{#2pt}%
  \fontfamily{#3}\fontseries{#4}\fontshape{#5}%
  \selectfont}%
\fi\endgroup%
{\renewcommand{\dashlinestretch}{30}
\begin{picture}(7380,1092)(0,-10)
\put(3615,312){\makebox(0,0)[lb]{\smash{{\SetFigFont{12}{14.4}{\familydefault}{\mddefault}{\updefault}$a$}}}}
\path(6465,912)(6915,912)
\path(7065,912)(7215,912)
\path(6465,987)(6465,837)
\put(7365,912){\makebox(0,0)[rb]{\smash{{\SetFigFont{12}{14.4}{\familydefault}{\mddefault}{\updefault}$b$}}}}
\put(1140,462){\ellipse{150}{150}}
\put(390,837){\ellipse{150}{150}}
\put(2340,837){\ellipse{150}{150}}
\put(2790,837){\ellipse{150}{150}}
\put(6240,237){\ellipse{150}{150}}
\put(4965,912){\ellipse{150}{150}}
\put(4065,312){\ellipse{150}{150}}
\path(2415,837)(2565,837)
\path(915,612)(915,12)(615,312)(915,612)
\path(165,312)(615,312)
\path(915,162)(1365,162)
\path(1065,462)(915,462)
\path(1215,462)(1365,462)
\path(465,837)(915,837)
\path(165,837)(315,837)
\path(915,912)(915,762)
\path(2115,837)(2265,837)
\path(2115,237)(3015,237)
\path(2565,837)(2715,837)
\path(2865,837)(3015,837)
\path(6465,687)(6465,87)(6765,387)(6465,687)
\path(6765,387)(7215,387)
\path(6015,537)(6465,537)
\path(6315,237)(6465,237)
\path(6015,237)(6165,237)
\path(3765,762)(4065,762)
\path(4365,1062)(4365,462)(4065,762)(4365,1062)
\path(4890,912)(4365,912)
\path(4365,612)(4665,612)
\path(5040,912)(5265,912)
\path(4665,762)(4665,162)(4965,462)(4665,762)
\path(4965,462)(5265,462)
\path(4140,312)(4665,312)
\path(3765,312)(3990,312)
\put(1665,537){\makebox(0,0)[lb]{\smash{{\SetFigFont{12}{14.4}{\rmdefault}{\mddefault}{\updefault}+}}}}
\put(3315,537){\makebox(0,0)[lb]{\smash{{\SetFigFont{12}{14.4}{\rmdefault}{\mddefault}{\updefault}+}}}}
\put(5565,537){\makebox(0,0)[lb]{\smash{{\SetFigFont{12}{14.4}{\rmdefault}{\mddefault}{\updefault}+}}}}
\put(15,312){\makebox(0,0)[lb]{\smash{{\SetFigFont{12}{14.4}{\familydefault}{\mddefault}{\updefault}$\vec x$}}}}
\put(1515,162){\makebox(0,0)[rb]{\smash{{\SetFigFont{12}{14.4}{\familydefault}{\mddefault}{\updefault}$\vec y$}}}}
\put(1515,462){\makebox(0,0)[rb]{\smash{{\SetFigFont{12}{14.4}{\familydefault}{\mddefault}{\updefault}$b$}}}}
\put(15,837){\makebox(0,0)[lb]{\smash{{\SetFigFont{12}{14.4}{\familydefault}{\mddefault}{\updefault}$a$}}}}
\put(1965,837){\makebox(0,0)[lb]{\smash{{\SetFigFont{12}{14.4}{\familydefault}{\mddefault}{\updefault}$a$}}}}
\put(1965,237){\makebox(0,0)[lb]{\smash{{\SetFigFont{12}{14.4}{\familydefault}{\mddefault}{\updefault}$\vec x$}}}}
\put(3165,237){\makebox(0,0)[rb]{\smash{{\SetFigFont{12}{14.4}{\familydefault}{\mddefault}{\updefault}$\vec y$}}}}
\put(3165,837){\makebox(0,0)[rb]{\smash{{\SetFigFont{12}{14.4}{\familydefault}{\mddefault}{\updefault}$b$}}}}
\put(7365,387){\makebox(0,0)[rb]{\smash{{\SetFigFont{12}{14.4}{\familydefault}{\mddefault}{\updefault}$\vec y$}}}}
\put(5865,537){\makebox(0,0)[lb]{\smash{{\SetFigFont{12}{14.4}{\familydefault}{\mddefault}{\updefault}$\vec x$}}}}
\put(5865,237){\makebox(0,0)[lb]{\smash{{\SetFigFont{12}{14.4}{\familydefault}{\mddefault}{\updefault}$a$}}}}
\put(3615,762){\makebox(0,0)[lb]{\smash{{\SetFigFont{12}{14.4}{\familydefault}{\mddefault}{\updefault}$\vec x$}}}}
\put(5415,912){\makebox(0,0)[rb]{\smash{{\SetFigFont{12}{14.4}{\familydefault}{\mddefault}{\updefault}b}}}}
\put(5415,462){\makebox(0,0)[rb]{\smash{{\SetFigFont{12}{14.4}{\familydefault}{\mddefault}{\updefault}$\vec y$}}}}
\put(6990,912){\ellipse{150}{150}}
\end{picture}
}
\end{minipage}
\end{tabular}
\\
\begin{tabular}{l}
=\quad
\begin{minipage}{5cm}
\setlength{\unitlength}{0.00083333in}
\begingroup\makeatletter\ifx\SetFigFont\undefined%
\gdef\SetFigFont#1#2#3#4#5{%
  \reset@font\fontsize{#1}{#2pt}%
  \fontfamily{#3}\fontseries{#4}\fontshape{#5}%
  \selectfont}%
\fi\endgroup%
{\renewcommand{\dashlinestretch}{30}
\begin{picture}(3480,939)(0,-10)
\put(1665,612){\makebox(0,0)[lb]{\smash{{\SetFigFont{12}{14.4}{\familydefault}{\mddefault}{\updefault}$\vec x$}}}}
\put(2115,162){\ellipse{150}{150}}
\path(1815,612)(2115,612)
\path(2415,912)(2415,312)(2115,612)(2415,912)
\path(2940,762)(2415,762)
\path(2415,462)(2715,462)
\path(3090,762)(3315,762)
\path(2715,612)(2715,12)(3015,312)(2715,612)
\path(3015,312)(3315,312)
\path(2190,162)(2715,162)
\path(1815,162)(2040,162)
\path(165,762)(1065,762)
\path(165,162)(1065,162)
\put(3465,762){\makebox(0,0)[rb]{\smash{{\SetFigFont{12}{14.4}{\familydefault}{\mddefault}{\updefault}$b$}}}}
\put(3465,312){\makebox(0,0)[rb]{\smash{{\SetFigFont{12}{14.4}{\familydefault}{\mddefault}{\updefault}$y$}}}}
\put(15,762){\makebox(0,0)[lb]{\smash{{\SetFigFont{12}{14.4}{\familydefault}{\mddefault}{\updefault}$a$}}}}
\put(1215,762){\makebox(0,0)[rb]{\smash{{\SetFigFont{12}{14.4}{\familydefault}{\mddefault}{\updefault}$b$}}}}
\put(15,162){\makebox(0,0)[lb]{\smash{{\SetFigFont{12}{14.4}{\familydefault}{\mddefault}{\updefault}$\vec x$}}}}
\put(1215,162){\makebox(0,0)[rb]{\smash{{\SetFigFont{12}{14.4}{\familydefault}{\mddefault}{\updefault}$\vec y$}}}}
\put(1365,462){\makebox(0,0)[lb]{\smash{{\SetFigFont{12}{14.4}{\rmdefault}{\mddefault}{\updefault}+}}}}
\put(1665,162){\makebox(0,0)[lb]{\smash{{\SetFigFont{12}{14.4}{\familydefault}{\mddefault}{\updefault}$a$}}}}
\put(3015,762){\ellipse{150}{150}}
\end{picture}
}
\end{minipage}
\end{tabular}

\subsection{Coherent states}
\label{CombinatorialCoherentStates}

In this section, I will indistinguishably regard profunctors
$\Tercat\profrightarrow\scat A$ as presheaves in $\Set^{\scat A}$, and
vice~versa.  Thus, according to
Definition~\ref{ExtensionsDef}~(\ref{GlobalElementExtension}), every
$V\in\Set^{\scat A}$ has a coherent state extension $\ext V\in
\Set^{\freesmc\scat A}$.  A calculation shows this to be given as
$$\textstyle
\ext V 
\,\iso \,
\coend^{\vec a \in\freesmc\scat A}
  \big( \prod_{a_i\in\vec a}\, V_{a_i} \big) 
  \, \cdot 
  \ket{\vec a}
$$

The combinatorial version of the coherent state property of
Definition~\ref{CoherentStateDefinition}~(\ref{CoherentStateDefinitionOne})
enjoyed by $\ext V$ according to Theorem~\ref{CoherentStateTheorem} yields
the isomorphism
$$
(\Fun\ \annihilation\epsilon_{\scat A})(\ext V)_{(a,\vec x)}
\,\iso\,
V_a\times \ext V_{\vec x} 
$$
from which we obtain the functorial form
$$\textstyle
(\Fun\ \annihilation\epsilon_{\scat A})(\ext V)
\,\iso\,
\coend^{a\in\scat A,\vec x\in\freesmc\scat A}\,
  ( V_a\times \prod_{x_i\in\vec x} P_{x_i} )\, \cdot \ket{(a,\vec x)}
$$

I now proceed to introduce a notion of \emph{exponential} (as parameterised by
algebras) and show how, when applied to the creation operator (with respect to
the free algebra), generalises the coherent state extension.  The definition
of exponential is based on that given in~\cite[Section~4]{Vicary}.

I have remarked in Section~2.2.5 
that $(\freesmc,\eta,\mu)$ is a pseudo monad on the bicategory of profunctors.
Pseudo algebras for it consist of profunctors $M:\freesmc\scat
A\profrightarrow\scat A$ equipped with natural isomorphisms 
$$
\xymatrix{
\ar@{}[dr]|-{\raisebox{4mm}{\hspace{5mm}\scriptsize$\iso$}}
\ar[rd]|-\shortmid_-{I_{\scat A}}
\scat A \ar[r]|-{\shortmid}^-{\eta_{\scat A}} & \freesmc\scat A
\ar[d]|-{-}^-{M} \\
& \scat A
}
\qquad\qquad
\xymatrix{
\ar@{}[dr]|-\iso
\ar[d]|-{-}_-{\mu_{\scat A}}
\freesmc\freesmc\scat A \ar[r]|-\shortmid^-{\freesmc M} &  \freesmc\scat A
\ar[d]^-M|-{-}
\\
\freesmc\scat A \ar[r]_M|-\shortmid & \scat A
}
$$
subject to coherence conditions, see~\eg~\cite{BKP}.  These pseudo
algebras provide the right notion of \emph{unbiased commutative
promonoidal category}, generalising the notion of \emph{symmetric
promonoidal category}~\cite{DayProMon} (\viz~commutative pseudo monoids in
the bicategory of profunctors) to biequivalent structures specified by
$n$-ary operations $M^{(n)}:\scat A^n\quotient{\symmgroup_n}
\profrightarrow\scat A$ for all $n\in\Nat$ that are commutative and
associative with unit $M^{(0)}$ up to coherent isomorphism.  The most
common examples of pseudo $\freesmc$-algebras arise from small symmetric
monoidal categories, say $(\scat M,\aone,\odot)$, by letting $\scat
M^\star\!:\freesmc\scat M\profrightarrow\scat M$ be given by $\scat
M^\star\big((x_1,\ldots,x_n),x\big)= \scat M(x_1\odot\cdots\odot x_n,x)$,
so that $\scat M^\star( (\,),x)=\scat M(\aone,x)$.  In particular, the
free pseudo algebra $\mu_{\scat A}:\freesmc\scat A\profrightarrow\scat A$
on $\scat A$ is obtained by this construction on the free symmetric
monoidal category $(\freesmc\scat A,(\,),\cdot)$ on $\scat A$. 

\begin{definition}
Let $M:\freesmc\scat A\profrightarrow\scat A$ be a pseudo $\freesmc$-algebra.
For $T:\freesmc\scat X\profrightarrow\scat A$, define 
$\exp_M(T)= M\icomp\Kext T: \freesmc\scat X\profrightarrow\scat A$.
\end{definition}
In particular, for $V\in\Set^{\scat A}$, we have that $$\textstyle
\exp_M(V) 
= \coend^{a\in\scat A} 
    \,\big[ \coend^{\vec x\in\freesmc\scat A} 
            \big(\prod_{x_i\in\vec x} V_{x_i}\big)\times 
      M(\vec x,a) 
    \big]\,\, 
    \cdot 
    \ket a
$$

\begin{proposition}
For a pseudo $\freesmc$-algebra $M:\freesmc\scat A\profrightarrow\scat A$,
$$
\exp_M(0_{\mbox{\scriptsize$\Tercat$},\scat A}) \iso M^{(0)}
$$
and
$$ 
\exp_M(S+T)
=
(\xymatrix{
\freesmc\scat X \ar[r]|-\shortmid^-{\bangDiag_{\freesmc\scat X}}
&
\freesmc\scat X \times \freesmc\scat X 
\ar[rrr]|-\shortmid^-{\exp_M(S)\times\exp_M(T)}
&&& 
\freesmc\scat A \times \freesmc\scat A 
\ar[r]|-\shortmid^-{\bangcoDiag_{\freesmc\scat A}}
&
\freesmc\scat A 
})
$$ 
for all $S,T:\freesmc\scat X\profrightarrow\scat A$.
\end{proposition}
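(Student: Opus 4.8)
The plan is to write $\exp_M(T)=M\icomp\Kext T$ with $\Kext T=\freesmc(T)\icomp\delta_{\scat X}$, and to reduce both assertions to facts already available: the proposition expressing $\freesmc(0_{A,B})=\banginitialmap_B\icomp\bangterminalmap_A$ and $\freesmc(f+g)=\bangcoDiag_B\icomp(\freesmc f\tensor\freesmc g)\icomp\bangDiag_A$ in terms of the Fock bialgebra structure; the comonad coherences $\bangterminalmap_{\freesmc A}\icomp\delta_A=\bangterminalmap_A$ and $\bangDiag_{\freesmc A}\icomp\delta_A=(\delta_A\tensor\delta_A)\icomp\bangDiag_A$; the pseudo-$\freesmc$-algebra laws $M\icomp\eta_{\scat A}\iso I_{\scat A}$ and $M\icomp\mu_{\scat A}\iso M\icomp\freesmc M$; and the density formula~(\ref{YonedaLemmaIso}). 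All equalities are, as usual here, up to the canonical coherent isomorphisms of the bicategory.

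For the first assertion, view $0_{\Tercat,\scat A}$ as the empty presheaf, \ie as a profunctor $\freesmc\Incat\iso\Tercat\profrightarrow\scat A$ (recall $\O=\Incat$, so $\freesmc\Incat\iso\Tercat$). Unfolding $\Kext$, then using the proposition on $\freesmc(0_{-,-})$, the coherence $\bangterminalmap_{\freesmc\Incat}\icomp\delta_\Incat=\bangterminalmap_\Incat$, and the invertibility of $\bangterminalmap_\Incat$,
$$
\Kext(0_{\Tercat,\scat A})
=\freesmc(0_{\freesmc\Incat,\scat A})\icomp\delta_\Incat
=\banginitialmap_{\scat A}\icomp\bangterminalmap_{\freesmc\Incat}\icomp\delta_\Incat
\iso\banginitialmap_{\scat A}\,,
$$
which is the instance $\ext{0_{\I,\scat A}}=\banginitialmap_{\scat A}$ noted after Definition~\ref{ExtensionsDef}. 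Hence $\exp_M(0_{\Tercat,\scat A})=M\icomp\banginitialmap_{\scat A}$; unwinding the profunctor composition and using density, $(M\icomp\banginitialmap_{\scat A})(*,a)=\coend^{\vec a\in\freesmc\scat A}\freesmc\scat A((\,),\vec a)\times M(\vec a,a)\iso M((\,),a)$, that is, $\exp_M(0_{\Tercat,\scat A})\iso M^{(0)}$. (Alternatively this is read straight off the displayed formula for $\exp_M(V)$, since $\prod_{x_i\in\vec x}V_{x_i}$ vanishes unless $\vec x=(\,)$.)

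For the second assertion, apply the proposition on $\freesmc(f+g)$ with $f=S$, $g=T$, and then the coherence $\bangDiag_{\freesmc\scat X}\icomp\delta_{\scat X}=(\delta_{\scat X}\tensor\delta_{\scat X})\icomp\bangDiag_{\scat X}$ to obtain
$$
\Kext(S+T)
=\bangcoDiag_{\scat A}\icomp(\freesmc S\tensor\freesmc T)\icomp\bangDiag_{\freesmc\scat X}\icomp\delta_{\scat X}
=\bangcoDiag_{\scat A}\icomp(\Kext S\tensor\Kext T)\icomp\bangDiag_{\scat X}\,,
$$
hence $\exp_M(S+T)=M\icomp\bangcoDiag_{\scat A}\icomp(\Kext S\tensor\Kext T)\icomp\bangDiag_{\scat X}$. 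It remains to move $M$ past $\bangcoDiag_{\scat A}$ so as to expose two copies of $\exp_M$. One uses here that $\bangcoDiag_{\scat A}=\freesmc(\coDiag_{\scat A})\icomp\varphi_{\scat A,\scat A}$ is the restriction of $\mu_{\scat A}$ along the length-two inclusion $\freesmc\scat A\times\freesmc\scat A\hookrightarrow\freesmc\freesmc\scat A$; applying the pseudo-algebra law $M\icomp\mu_{\scat A}\iso M\icomp\freesmc M$ together with a coend computation using the object-wise form of $\freesmc M$ and density then yields $M\icomp\bangcoDiag_{\scat A}\iso m\icomp(M\tensor M)$, where $m:\scat A\times\scat A\profrightarrow\scat A$ is the binary multiplication induced from $M^{(2)}:\scat A^2\quotient{\symmgroup_2}\profrightarrow\scat A$ by restriction along $\scat A\times\scat A\to\scat A^2\quotient{\symmgroup_2}$. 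Substituting and recognising $M\icomp\Kext S=\exp_M(S)$ and $M\icomp\Kext T=\exp_M(T)$ gives the composite in the statement.

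The routine part is the bookkeeping with the two stated propositions and the comonad coherences. The step I expect to be the main obstacle is the identity $M\icomp\bangcoDiag_{\scat A}\iso m\icomp(M\tensor M)$: this is precisely where the pseudo-\emph{algebra} structure on $M$ (and not mere naturality) is needed, and it requires reconciling the $\symmgroup_2$-symmetry carried by $\freesmc M$ with the coend over $\scat A^2\quotient{\symmgroup_2}$. It is cleanest to verify componentwise via density and the combinatorial laws of Proposition~\ref{HomsetCombinatorics}; everything else is bicategorical coherence bookkeeping.
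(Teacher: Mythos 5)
Your argument is correct; note that the paper states this proposition without proof, so there is no official argument to compare against, but the reduction you make — $\Kext{(0_{\mbox{\scriptsize$\Tercat$},\scat A})}\iso\banginitialmap_{\scat A}$ via $\freesmc(0_{A,B})=\banginitialmap_B\icomp\bangterminalmap_A$ and the comonad coherence, then co-Yoneda; and $\Kext{(S+T)}=\bangcoDiag_{\scat A}\icomp(\Kext{S}\times\Kext{T})\icomp\bangDiag_{\scat X}$ via $\freesmc(f+g)=\bangcoDiag\icomp(\freesmc f\times\freesmc g)\icomp\bangDiag$ and $\bangDiag_{\freesmc\scat X}\icomp\delta_{\scat X}=(\delta_{\scat X}\times\delta_{\scat X})\icomp\bangDiag_{\scat X}$ — is surely the intended one. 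Two remarks. First, you are right to single out $M\icomp\bangcoDiag_{\scat A}\iso m\icomp(M\times M)$ as the only step needing the pseudo-algebra structure; it does go through as you sketch: writing $\bangcoDiag_{\scat A}$ as $\mu_{\scat A}$ restricted along the length-two inclusion $\freesmc\scat A\times\freesmc\scat A\rightarrow\freesmc\freesmc\scat A$, the law $M\icomp\mu_{\scat A}\iso M\icomp\freesmc M$ evaluated at pairs $(\vec x,\vec y)$ gives $M(\vec x\concat\vec y,a)\iso\coend^{b,c\in\scat A}M(\vec x,b)\times M(\vec y,c)\times M\big((b,c),a\big)$, the $\symmgroup_2$-sum in the objectwise formula for $\freesmc M$ being absorbed by the coend over $\scat A^2\quotient{\symmgroup_2}$. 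Second, your conclusion is $\exp_M(S+T)\iso m\icomp(\exp_M(S)\times\exp_M(T))\icomp\bangDiag_{\scat X}$ with $m:\scat A\times\scat A\profrightarrow\scat A$ the induced binary multiplication, which is \emph{not} literally the displayed composite: as printed, the middle leg $\exp_M(S)\times\exp_M(T)$ cannot land in $\freesmc\scat A\times\freesmc\scat A$ since $\exp_M(S)$ has codomain $\scat A$. The statement typechecks as written only when $M$ is a free algebra $\mu_{\scat B}$ (so $\scat A=\freesmc\scat B$ and $m=\bangcoDiag_{\scat B}$), which is the case used afterwards; for general $M$ your version with $m$ is the correct reading, so what you have proved is the (repaired) statement rather than a weaker variant.
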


Note that the notion of exponential with respect to free algebras is a form of
\emph{comonadic/monadic convolution}, as for $T:\freesmc\scat
X\profrightarrow\freesmc\scat A$, the definition of $\exp_{\mu_{\scat A}}(T)$
amounts to the composite
\begin{equation}\label{LiftedMonadicConvolution}
\hfill
\xymatrix{
\freesmc\scat X \ar[r]^-{\delta_{\scat X}}|-\shortmid
& 
\freesmc\freesmc\scat X \ar[r]^-{\freesmc\,T}|-\shortmid
&
\freesmc\freesmc\scat A \ar[r]^-{\mu_{\scat A}}|-\shortmid
&
\freesmc\scat A 
}
\hfill\hfill
\end{equation}

\begin{theorem}
For $V\in\Set^{\scat A}$,
$$
\exp_{\mu_{\scat A}}(\creation\eta_{\scat A}^V)
  \icomp \banginitialmap_{\scat A}
\iso
\ext V
$$
\end{theorem}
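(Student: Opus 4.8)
The plan is to deduce the statement from Proposition~\ref{CoherentStateCreation} together with a unit law of the pseudo monad $(\freesmc,\eta,\mu)$ of Section~2.2.5, exploiting that the combinatorial Fock-space construction realises a linear Fock-space construction in the bicategorical sense of the axiomatics. First I would unfold the exponential with respect to the free pseudo algebra: by definition $\exp_{\mu_{\scat A}}(\creation\eta_{\scat A}^V)=\mu_{\scat A}\icomp\Kext{(\creation\eta_{\scat A}^V)}$, whence
$$
\exp_{\mu_{\scat A}}(\creation\eta_{\scat A}^V)\icomp\banginitialmap_{\scat A}
\,=\,
\mu_{\scat A}\icomp\big(\Kext{(\creation\eta_{\scat A}^V)}\icomp\banginitialmap_{\scat A}\big)\,,
$$
where $\creation\eta_{\scat A}^V$ is the creation operator of Section~2.2.6, associated to the pseudo monad unit $\eta_{\scat A}:\scat A\profrightarrow\freesmc\scat A$ of Section~2.2.5 and to the vector $V$. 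Next I would apply Proposition~\ref{CoherentStateCreation}, read in the bicategorical setting, to the linear Fock-space construction $\freesmc$, the natural transformation $\eta_{\scat A}$, and $V$, obtaining $\Kext{(\creation\eta_{\scat A}^V)}\icomp\banginitialmap_{\scat A}\iso\ext{\eta_{\scat A}\icomp V}$.

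Then I would invoke the elementary identity $\ext{f\icomp v}\iso\freesmc(f)\icomp\ext v$ for all $v:\Tercat\profrightarrow\scat A$ and $f:\scat A\profrightarrow\scat B$, which is immediate from the defining composite of $\ext{-}$ in Definition~\ref{ExtensionsDef}(\ref{GlobalElementExtension}) and the pseudofunctoriality of $\freesmc$; with $v=V$ and $f=\eta_{\scat A}$ this rewrites $\ext{\eta_{\scat A}\icomp V}\iso\freesmc(\eta_{\scat A})\icomp\ext V$. Putting the pieces together,
$$
\exp_{\mu_{\scat A}}(\creation\eta_{\scat A}^V)\icomp\banginitialmap_{\scat A}
\,\iso\,
\mu_{\scat A}\icomp\freesmc(\eta_{\scat A})\icomp\ext V
\,\iso\,
\ext V\,,
$$
the last isomorphism being the unit law $\mu_{\scat A}\icomp\freesmc(\eta_{\scat A})\iso\id_{\freesmc\scat A}$ of the pseudo monad $(\freesmc,\eta,\mu)$. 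The one delicate point is the legitimacy of invoking Proposition~\ref{CoherentStateCreation} in this setting, i.e.\ that the combinatorial model genuinely instantiates the bicategorical generalisation of the axiomatics with $\eta_{\scat A}$ in the role of the natural transformation $\eta_A$ there; this is exactly the matter the paper chooses not to dwell on, and modulo it the remaining steps are purely formal.

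Finally, as the combinatorial counterpart I would verify the isomorphism $\mu_{\scat A}\icomp\ext{\eta_{\scat A}\icomp V}\iso\ext V$ that underlies the last step directly in the calculus of coends. Writing $W=\eta_{\scat A}\icomp V$, one has $W_{(x)}\iso V_x$ and $W_{\vec x}\iso 0$ for $|\vec x|\ne 1$ by the density formula; hence, by the displayed formula $\ext W\iso\coend^{\vec\xi\in\freesmc\freesmc\scat A}\big(\prod_{\vec\xi_i\in\vec\xi}W_{\vec\xi_i}\big)\cdot\ket{\vec\xi}$ and the description of $\mu_{\scat A}$ by tuple-flattening, the value $(\mu_{\scat A}\icomp\ext W)(*,\vec c)$ is a coend over $\freesmc\freesmc\scat A$ whose integrand is supported on those double tuples all of whose blocks are singletons. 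Since morphisms in $\freesmc\freesmc\scat A$ match blocks of equal length up to permutation, this full subcategory is a union of connected components of $\freesmc\freesmc\scat A$ and is isomorphic to $\freesmc\scat A$, so the coend reduces to $\coend^{\vec a\in\freesmc\scat A}\big(\prod_{a_i\in\vec a}V_{a_i}\big)\times\freesmc\scat A(\vec a,\vec c)$, which is precisely $\ext V_{\vec c}$; combinatorially, the left-hand side describes building a tuple of $\scat A$-elements weighted by $V$ through iterated creation at the end starting from the empty tuple, which is exactly the coherent state $\ext V$.
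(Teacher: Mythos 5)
Your argument is correct and coincides with the paper's own algebraic proof: unfold $\exp_{\mu_{\scat A}}$, apply Proposition~\ref{CoherentStateCreation}, rewrite $\ext{\eta_{\scat A}\icomp V}$ as $\freesmc(\eta_{\scat A})\icomp\ext V$, and conclude by the monad unit law. Your closing coend computation likewise parallels the formal combinatorial verification the paper gives immediately afterwards (reducing to $\flatten{\lift{\vec a}}=\vec a$), so there is nothing to add.
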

\begin{proof}
A simple algebraic proof follows:
$$\begin{array}{l}
\exp_{\mu_{\scat A}}(\creation\eta_{\scat A}^V)
  \icomp\banginitialmap_{\scat A} 
\ = \
\mu_{\scat A}
  \icomp\Kext{(\creation\eta_{\scat A}^V)}
  \icomp\banginitialmap_{\scat A}
\ \iso \
\mu_{\scat A}\icomp\ext{\eta_A\icomp V}
\quad\mbox{, by~(\ref{CoherentStateCreationEquation})}
\\[2mm]
\qquad \iso\ 
\mu_{\scat A}\icomp\freesmc(\eta_{\scat A})\icomp\ext{V}
\ \iso\
\ext V
\quad\mbox{, by a monad law
}
\end{array}$$\\[-10.5mm]
\end{proof}

I conclude the paper with a formal combinatorial proof of this result.
Observe first that for the composite~(\ref{LiftedMonadicConvolution}), we
have:
$$\begin{array}{l}
(\mu_{\scat A}\icomp\freesmc(T)\icomp\delta_{\scat X})\,(\vec x,\vec a)
\\[3mm]
\quad\iso\
\coend^{\xi\in\freesmc\freesmc\scat X,\alpha\in\freesmc\freesmc\scat A}
\coend^{\vec z\in\freesmc(\freesmc\scat X^\op\times\freesmc\scat A)}
\begin{array}[t]{l}
  \big(\prod_{z_i\in\vec z} Tz_i\big)
  \times
  \freesmc\freesmc\scat X(\xi,\freesmc\proj1\vec z)
  \times
  \freesmc\freesmc\scat A(\freesmc\proj2\vec z,\alpha)
  \\[1mm] \quad
  \times\
  \freesmc\scat X(\vec x,\flatten\xi)
  \times
  \freesmc\scat A(\flatten\alpha,\vec a)
\end{array}
\\[8mm]
\quad\iso\
\coend^{\vec z\in\freesmc(\freesmc\scat X^\op\times\freesmc\scat A)}
  \big(\prod_{z_i\in\vec z} Tz_i\big)
  \times
  \freesmc\scat X\big(\vec x,\flatten{[\freesmc\proj1\vec z]}\big)
  \times
  \freesmc\scat A\big(\flatten{[\freesmc\proj2\vec z]},\vec a\big)
\end{array}$$
and hence that
$$\begin{array}{l}
(\mu_{\scat A}\icomp\freesmc(T)\icomp\delta_{\scat X}\icomp\banginitialmap_{\scat X})\,(\vec a)
\\[3mm]
\quad\iso\
\coend^{\vec z\in\freesmc(\freesmc\scat X^\op\times\freesmc\scat A)}
  \big(\prod_{z_i\in\vec z} Tz_i\big)
  \times
  \freesmc\scat X\big(\, (\,)\, ,\flatten{[\freesmc\proj1\vec z]}\big)
  \times
  \freesmc\scat A\big(\flatten{[\freesmc\proj2\vec z]},\vec a\big)
\\[3mm]
\quad\iso\
\coend^{\vec z\in\freesmc\freesmc\scat A}
  \big(\prod_{z_i\in\vec z} T( (\,),z_i)\big)
  \times
  \freesmc\scat A\big(\flatten{\vec z\,},\vec a\big)
\end{array}$$
Then, according to~(\ref{CreationCoendFormula}), 
$$\begin{array}{l}
(\mu_{\scat A}\icomp\freesmc(\creation\eta^V_{\scat A})\icomp\delta_{\scat A}\icomp\banginitialmap_{\scat A})\,(\vec a)
\\[3mm]
\quad\iso\
\coend^{\vec z\in\freesmc\freesmc\scat A}
  \big(\prod_{z_i\in\vec z} 
    \coend^{x\in\scat A} V_x\times\freesmc\scat A\big( (x),z_i\big)
  \big)
  \times
  \freesmc\scat A\big(\flatten{\vec z\,},\vec a\big)
\\[3mm]
\quad\iso\
\coend^{\vec z\in\freesmc\freesmc\scat A}
\coend^{x_{z_i}\in\scat A\,(z_i\in\vec z)} 
  \big( \prod_{z_i\in\vec z} V_{x_{z_i}} \big)
  \times
  \big(\prod_{z_i\in\vec z} \freesmc\scat A\big( (x_{z_i}),z_i \big)
  \big)
  \times
  \freesmc\scat A\big(\flatten{\vec z\,},\vec a\big)
\\[3mm]
\quad\iso\
\coend^{\vec x\in\freesmc\scat A}
  \big( \prod_{x_i\in\vec x} V_{x_i} \big)
  \times
  \freesmc\scat A\big(\flatten{\lift{\vec x}},\vec a\big)
\\[3mm]
\quad\iso\
\prod_{x_i\in\vec a} V_{x_i} 
\end{array}$$
where, for $a_i\in\scat A$, 
$\lift{(a_1,\ldots,a_n)}
 = \big(\, (a_1),\ldots,(a_n)\big)\in\freesmc\freesmc\scat A$;
so that, for $\vec a\in\freesmc\scat A$, $\flatten{\lift{\vec a}}=\vec a$.

\hide{
For the combinatorial content of the argument, note first that
$$\textstyle
\big(\Fun\,\eta_{\scat A}\big)(V)
\,=\,
\coend^{\vec x}
  \big[ \coend^{a\in\scat A} 
          V_a \times \freesmc\scat A\big( (a),\vec x\,\big)\,\big]\,
  \cdot
  \ket{\vec x}
\,\iso\,
\coend^{a\in\scat A} V_a \cdot \ket{(a)}
$$
and, letting
$\flatten{(\alpha_1,\ldots,\alpha_n)}
 = \alpha_1\cdot\ldots\cdot\alpha_n\in\freesmc\scat A$
for $\alpha_i\in\freesmc\scat A$ and 
$\lift{(a_1,\ldots,a_n)}
 = \big(\, (a_1),\ldots,(a_n)\big)\in\freesmc\freesmc\scat A$ 
for $a_i\in\scat A$, 
calculate as follows:
$$\begin{array}{l}
\exp_{\mu_\scat A}(\eta_{\scat A}\,V)
\\[1mm]
\qquad\iso\
\exp_{\mu_\scat A}\big(\coend^{a\in\scat A} V_a\, \cdot \ket{(a)}\big)
\\[3mm]
\qquad=\
\coend^{\vec x\in\freesmc\scat A} 
    \,\big[ \coend^{\alpha\in\freesmc\freesmc\scat A} 
            \big(\prod_{\alpha_i\in\alpha} 
    \big(\coend^{a\in\scat A} V_a\ \cdot \ket{(a)} \big)_{\alpha_i}\big)
    \times \freesmc\scat A(\flatten\alpha,\vec x)\,\big]\,\, 
    \cdot 
    \ket{\vec x}
\\[3mm]
\qquad\iso\
\coend^{\alpha\in\freesmc\freesmc\scat A} 
    \big[\prod_{\alpha_i\in\alpha} 
    \big(\coend^{a\in\scat A} V_a\ \cdot \ket{(a)} \big)_{\alpha_i}\big]\,
    \cdot 
    \ket{\flatten \alpha}
\\[3mm]
\qquad\iso\
\coend^{\alpha\in\freesmc\freesmc\scat A} 
    \big[ \prod_{\alpha_i\in\alpha} 
          \big(\coend^{a\in\scat A} V_a \times 
          \freesmc\scat A\big( (a) , {\alpha_i}\big)\, \big)
    \big]\,
    \cdot 
    \ket{\flatten \alpha}
\\[3mm]
\qquad\iso\
\coend^{\alpha\in\freesmc\freesmc\scat A} 
  \coend^{a_{\alpha_i}\in\scat A\, (\alpha_i\in \alpha)} 
  \big[\,
  \big( \prod_{\alpha_i\in\alpha} V_{a_{\alpha_i}} \big)
  \times
  \prod_{\alpha_i\in\alpha} 
  \freesmc\scat A\big( (a_{\alpha_i}) , \alpha_i \big)
  \,\big]\
  \cdot 
  \ket{\flatten \alpha}
\\[3mm]
\qquad\iso\
\coend^{\vec a\in\freesmc\scat A} 
  \big( \prod_{a_i\in\vec a} V_{a_i} \big)\,
  \cdot 
  \ket{\flatten{\lift{\vec a} } }
\\[3mm]
\qquad=\
\coend^{\vec a\in\freesmc\scat A} 
  \big( \prod_{a_i\in\vec a} V_{a_i} 
  \,\big)\
  \cdot 
  \ket{\vec a}
\end{array}$$
}


\vspace{5mm}
\noindent
Marcelo Fiore\\
Computer Laboratory\\
University of Cambridge\\
15 JJ Thomson Avenue\\
Cambridge CB3 0FD\\
UK\\
Marcelo.Fiore@cl.cam.ac.uk

\end{document}